\documentclass[UTF-8,reqno]{amsart}
\usepackage{enumerate}
\usepackage{mhequ}
\setlength{\topmargin}{-0.3cm}
\setlength{\textheight}{21.5truecm}
\usepackage{amssymb,url,color, booktabs,nccmath,amsmath}
\usepackage{amsthm}
\usepackage[left=3.2cm,right=3.2cm,top=4cm,bottom=4cm]{geometry}
\usepackage{mathrsfs}
\usepackage{enumitem,dsfont}

\usepackage{color}
\usepackage[colorlinks=true]{hyperref}

\hypersetup{
	linkcolor=blue,          
	citecolor=red,        
	filecolor=blue,      
	urlcolor=cyan
}

\definecolor{darkergreen}{rgb}{0.0, 0.5, 0.0}


\setlength{\parskip}{2pt}

\numberwithin{equation}{section}
\def\theequation{\arabic{section}.\arabic{equation}}
\newcommand{\be}{\begin{eqnarray}}
	\newcommand{\ee}{\end{eqnarray}}
\newcommand{\ce}{\begin{eqnarray*}}
	\newcommand{\de}{\end{eqnarray*}}

\newtheorem{theorem}{Theorem}[section]

\newtheorem{lemma}[theorem]{Lemma}
\newtheorem{proposition}[theorem]{Proposition}

\newtheorem{Examples}[theorem]{Example}
\newtheorem{corollary}[theorem]{Corollary}

\newtheorem{definition}[theorem]{Definition}
\theoremstyle{definition}
\newtheorem{remark}[theorem]{Remark}

\def\${|\!|\!|}

\DeclareMathOperator{\supp}{supp}

\def\p{\partial}

\def\<{{\langle}}
\def\>{{\rangle}}
\def\({{\Big(}}
\def\){{\Big)}}

\def\bx{{\mathbf{x}}}
\def\tr{\mathrm {tr}}
\def\W{{\mathcal W}}

\def\dif{{\mathord{{\rm d}}}}

\def\min{{\mathord{{\rm min}}}}

\def\={&\!\!=\!\!&}

\def\mN{{\mathbb N}}

\def\mP{{\mathbb P}}

\def\mR{{\mathbb R}}

\def\1{{\mathbf{1}}}

\def\E{\mathbf E}

\def\geq{\geqslant}
\def\leq{\leqslant}

\def\div{\mathord{{\rm div}}}

\def\p{\partial}

\def\<{{\langle}}
\def\>{{\rangle}}
\def\({{\Big(}}
\def\){{\Big)}}

\def\bx{{\mathbf{x}}}
\def\tr{\mathrm {Tr}}
\def\W{{\mathcal W}}

\def\dif{{\mathord{{\rm d}}}}

\def\min{{\mathord{{\rm min}}}}

\def\={&\!\!=\!\!&}
\def\bt{\begin{theorem}}
	\def\et{\end{theorem}}
\def\bl{\begin{lemma}}
	\def\el{\end{lemma}}
\def\br{\begin{remark}}
	\def\er{\end{remark}}
\def\bx{\begin{Examples}}
	\def\ex{\end{Examples}}
\def\bd{\begin{definition}}
	\def\ed{\end{definition}}
\def\bp{\begin{proposition}}
	\def\ep{\end{proposition}}
\def\bc{\begin{corollary}}
	\def\ec{\end{corollary}}

\def\geq{\geqslant}
\def\leq{\leqslant}

\def\div{\mathord{{\rm div}}}

\def\Id{\textrm{Id}}

 \def\R{\mathbb R}
 \def\R{\mathbb R}    
\def\N{\mathbb N}  
   
\def\<{\langle} \def\>{\rangle}

\makeatletter

\newcommand{\Rmnum}[1]{\expandafter\@slowromancap\romannumeral #1@}
\makeatother

\allowdisplaybreaks

\begin{document}
	\fontsize{10.0pt}{\baselineskip}\selectfont
	
	\title[H\"{o}lder continuous solutions to stochastic 3D Euler equations ]{  H\"{o}lder continuous solutions to stochastic 3D Euler equations via stochastic convex integration}
	
	\author{Lin L\"{u}}  
	\address[L. L\"{u}]{School of Mathematics and Statistics, Beijing Institute of Technology, Beijing 100081, China}
	\email{3120235976@bit.edu.cn}

	\begin{abstract}
		In this paper, we are concerned with the three-dimensional Euler equations driven by an additive stochastic forcing. First, we construct global H\"{o}lder continuous (stationary) solutions in $C(\mathbb{R};C^{\vartheta})$ space for some $\vartheta>0$  via a different method from \cite{LZ24}. Our approach is based on applying stochastic convex integration to the construction of Euler flows in \cite{DelSze13} to derive uniform moment estimates independent of time. Second, for any divergence-free H\"{o}lder continuous initial condition, we show the existence of infinitely many global-in-time probabilistically strong and analytically weak solutions in $L^p_{\rm{loc}}([0,\infty);C^{\vartheta'}) \cap C_{\rm{loc}}([0,\infty);H_{\sigma}^{-1})$ for all $p\in [1,\infty)$ and some $\vartheta'>0$.
	\end{abstract}
	
	\subjclass[2010]{60H15; 35R60; 35Q31}
	\keywords{Stochastic Euler equations, Cauchy problem, stochastic convex integration. }
	
	\maketitle
	\tableofcontents
	\section{Introduction}
	The Euler equations serve as the classical model depicting the motion of an inviscid, incompressible, homogeneous fluid, and their ill/well-posedness is always intriguing. In the deterministic case, there is a vast amount of literature on the mathematical theory for the Euler equations; see \cite{BM02, BT07, Che98, Con07} and references therein. Notably, significant progress over the past decade has been made by employing the convex integration method to establish the non-uniqueness of weak solutions to the Euler equations. This breakthrough triggered a series of works, including \cite{Buc15, BDLIS16, DLS10, DLS12, DelSze13, DLS14, DS17}, culminating in the resolution of the flexible part of Onsager’s conjecture by Isett in \cite{Ise18} (see also \cite{BDLSV19} for the extension of the result to admissible solutions). Additionally, recent works \cite{GKN23, GR23, NV23} have resolved Onsager's conjecture for 2D and the strong Onsager theorem.

	Given the extensive application of stochastics in fluid dynamics, there arises a crucial necessity to improve the mathematical foundations of stochastic partial differential equations governing fluid flow, especially focusing on inviscid models such as stochastic Euler equations. Early works on stochastic Euler equations have been extensively treated in both two and three dimensions; see, for example, \cite{BF99, BFM16, BP01, CFH19, GV!4, Kim09, MV00}. In particular, Glatt-Holtz and Vicol \cite{GV!4} obtained local existence and uniqueness of probabilistically strong solutions to stochastic Euler equations in three dimensions for nonlinear multiplicative noise, and global well-posedness in two dimensions for additive and linear multiplicative noise. Most recently, the convex integration has already been applied in stochastic cases; for example, \cite{BFH20, CFF19, HZZ22a, HZZ22b, HLP22, LZ24} concerning the Euler system, and \cite{CDZ22, HZZ22b, HZZ21markov, HZZ23b, HZZ19, Umb23} concerning the Navier--Stokes system. In addition, the Cauchy problem associated with stochastic systems has been studied in \cite{HZZ21markov, HZZ23b, Umb23}, establishing the existence of infinitely many global-in-time probabilistically strong and analytically weak solutions to 3D Navier--Stokes equations driven by additive noise of trace class, space-time white noise, and Stratonovich transport noise. One should also mention that in \cite{CDZ22, HZZ22b}, the authors developed a new stochastic convex integration method to derive global solutions without using stopping times.
	
	In the present work, we consider the incompressible Euler equations with additive noise, on the three-dimensional torus $\mathbb{T}^3=\mathbb{R}^3/(2\pi\mathbb{Z})^3$ 
	\begin{equation}\label{eul1}
		\begin{aligned}
			\dif u+\div(u\otimes u)\,\dif t+\nabla P\,\dif t&=\dif B,
			\\
			\div \, u&=0,
		\end{aligned}
	\end{equation}
	where $u$ denotes the velocity vector field, and $P$ is the pressure scalar
	field.  Here $B$ is a $GG^*$-Wiener process on a given filtered probability space $(\Omega,\mathcal{F},(\mathcal{F}_{t})_{t\in\mR},\mathbf{P})$ and $G$ is a Hilbert-Schmidt operator from $U$ to $L_{\sigma}^2$ for some Hilbert space $U$.
	
	In the recent works \cite{HZZ22b} and \cite{LZ24}, global and stationary solutions to equation \eqref{eul1} have been constructed, respectively, with Sobolev regularity and Hölder continuity in space. Our approach in \cite{LZ24} involved combining stochastic convex integration with pathwise estimates during the iteration. This was  prompted by the necessity of pathwise lower bounds for the gradient of the solutions to certain transport equations, which are used to cancel the transport error during the iteration and improve the regularity of the solutions to the Euler system. Consequently, we incorporated a cut-off technique throughout the iteration. It is natural to ask whether we can directly construct Hölder continuous solutions via stochastic convex integration, and the first aim of this paper is to address this issue. Our first main result echoes that of \cite[Theorem 1.2]{LZ24} and is stated as follows, with its proof detailed in Theorem~\ref{v}.
	\begin{theorem}
		\label{Theorem 1.1}
		Suppose that $\tr((-\Delta)^{3/2+\varkappa}GG^*)<\infty$ for some $\varkappa>0$, then
	\begin{enumerate}[label=(\alph*)]
				\item  there exists an $(\mathcal{F}_{t})_{t\in\mR}$-adapted process $u$ which belongs to $C(\R;C^\vartheta)$ for some $\vartheta >0$ and is an analytically weak solution to \eqref{eul1} with $(\mathcal{F}_{t})_{t\in\mR}$ being the normal filtration generated by the Wiener process  $B$;
				\item  there exists a stationary solution $\bar{u} \in C(\mR;C^{\frac\vartheta2})$ to the stochastic Euler equations \eqref{eul1}.
			\end{enumerate}
			Moreover, the solutions are non-unique, as their energy can be determined by various energy functions.
	\end{theorem}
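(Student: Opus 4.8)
The plan is to recast \eqref{eul1} as a random partial differential equation and then run a stochastic version of the Hölder convex integration scheme of \cite{DelSze13} on $\R\times\T$, arranging every iterative estimate so that it carries a \emph{single} random weight whose moments are bounded uniformly in time. First, let $z$ solve the linear equation $\dif z+z\,\dif t=\dif B$ on $\R$, i.e.\ $z(t)=\int_{-\infty}^t\e^{-(t-s)}\,\dif B(s)$, the stationary Ornstein--Uhlenbeck process associated to \eqref{eul1}. From $\tr((-\Delta)^{3/2+\varkappa}GG^*)<\infty$, Sobolev embedding and a Kolmogorov-type argument give $z\in C(\R;C^\gamma)$ for some $\gamma>0$, $(\mathcal F_t)$-adapted, with $\sup_{k\in\mZ}\mathbf E\big(\sup_{t\in[k,k+1]}\|z(t)\|_{C^\gamma}\big)^p<\infty$ for every $p\in[1,\infty)$. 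Writing $u=v+z$, the field $v$ solves pathwise $\p_t v+\div((v+z)\otimes(v+z))+\nabla P=z$, $\div v=0$; since $z$ is a genuine function no renormalisation is needed, and one absorbs both the linear source $z$ and the quadratic term $z\otimes z$, written in inverse-divergence trace-free form, into an initial Reynolds stress $\mathring R_0$, starting the iteration from $v_0=0$. Then $\|\mathring R_0(t)\|_{C^0}$ is bounded by a fixed power of $1+\|z(t)\|_{C^\gamma}$ and so has moments uniform in $t$. One now looks for adapted pairs $(v_q,\mathring R_q)$ solving the Euler--Reynolds system $\p_t v_q+\div((v_q+z)\otimes(v_q+z))+\nabla p_q=\div\mathring R_q$, $\div v_q=0$.

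Second, the iteration. Fix $\lambda_q=a^{b^q}$, $\delta_q=\lambda_q^{-2\beta}$ with $b$ large, $\beta>0$ small and $a$ large; following \cite{DelSze13}, given $(v_q,\mathring R_q)$ one mollifies $v_q$ and $z$ at a suitable intermediate scale $\ell_q$, builds the perturbation $w_{q+1}$ as a sum of Beltrami modes at frequency $\lambda_{q+1}$ with amplitudes prescribed by $\mathring R_q$ and by the target energy (see below), and uses the coarse-grained flow of $v_q$ — solving $\p_t\Phi+v_q\cdot\nabla\Phi=0$ on sub-intervals of length $\tau_q$ comparable to $(\delta_q^{1/2}\lambda_q)^{-1}$ — inside the oscillatory phases so as to cancel the transport part of the error. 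One then defines $\mathring R_{q+1}$ by inverting the divergence on the mollification, transport, Nash and oscillation errors together with the new quadratic interactions between $w_{q+1}$ and $z$. The essential point is the bookkeeping: every bound is written with the single adapted weight $L(t):=\sup_{|s-t|\le2}(1+\|z(s)\|_{C^\gamma})$ raised to a power \emph{independent of $q$}, the inductive estimates being $\|v_q(t)\|_{C^1}\le C\delta_q^{1/2}\lambda_q L(t)^{c}$ and $\|\mathring R_q(t)\|_{C^0}\le C\delta_{q+1}L(t)^{c}$, and the smallness of $\beta$ (hence of the final exponent $\vartheta$) is dictated, exactly as in \cite{DelSze13}, by demanding that the frequency gain $\lambda_{q+1}^{-\beta b}$ dominate the losses produced in these errors.

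Third, passage to the limit and conclusions. From the above, $\sum_q\|v_{q+1}(t)-v_q(t)\|_{C^\vartheta}\le CL(t)^{c}\sum_q\lambda_{q+1}^{\vartheta-\beta}$ converges a.s., and in every $L^p$ uniformly in $t$, provided $\vartheta<\beta$; hence $v_q\to v$ in $C(\R;C^\vartheta)$ with $\sup_t\mathbf E\|v(t)\|_{C^\vartheta}^p<\infty$, $v$ is $(\mathcal F_t)$-adapted, and $\mathring R_q\to0$ shows that $u:=v+z$ is an $(\mathcal F_t)$-adapted analytically weak solution of \eqref{eul1} in $C(\R;C^\vartheta)$, which is (a). For (b) one uses these uniform-in-time moment bounds together with the stationarity of $z$: the laws of the time-shifts of the pair $(u,B)$ form a tight family on $C_{\mathrm{loc}}(\R;C^{\vartheta'})$ (times a path space for the noise) for any $\vartheta'<\vartheta$, thanks to the compact embedding of $C^\vartheta$ into $C^{\vartheta'}$ on bounded sets and equicontinuity in time; a Krylov--Bogoliubov / Cesàro-averaging argument then yields a shift-invariant limit law, which one realises on a probability space as a \emph{stationary} weak solution $\bar u$, the regularity $C^{\vartheta/2}$ being the price of passing to the limit in a topology weaker than $C^\vartheta$. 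Finally, as in \cite{DelSze13} the amplitudes in $w_{q+1}$ can be tuned so that $\int_{\T}|v_q(t)|^2\,\dif x$ converges to a prescribed admissible profile $e(t)$; distinct choices of $e$ — in particular distinct constants, giving distinct \emph{stationary} solutions — produce solutions with distinct energies, hence distinct laws, and there are infinitely many of them.

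The main obstacle, and the point where the argument must genuinely differ from \cite{LZ24}, lies in the second step: closing the induction with one random weight $L$ raised to a $q$-independent power, with moments uniform in $t$ and \emph{without} any cut-off. This is delicate because the frequencies $\lambda_q$ grow super-exponentially while the norms of $z$ are controlled only polynomially (albeit in every $L^p$), so the $z$-interaction errors have to be shown to be reabsorbed by the frequency gap at each step; and because the transport error is removed via the flow maps of the \emph{random} fields $v_q$, whose gradient bounds — and hence the admissible sub-interval lengths $\tau_q$ — were enforced pathwise in \cite{LZ24} through a cut-off, but must here be carried inside the moment estimates instead. Controlling the possibly large random initial stress $\mathring R_0$ at the very first step, and verifying that the Krylov--Bogoliubov limit in (b) is still driven by a $GG^*$-Wiener process, are further but more routine points.
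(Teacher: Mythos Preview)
Your proposal takes a route that is \emph{not} the one in the paper, and the difference is exactly at the point you flag as the main obstacle. You plan to kill the transport error via the Lagrangian flow of $v_q$ (solving $\partial_t\Phi+v_q\cdot\nabla\Phi=0$ on sub-intervals of length $\tau_q$), carrying the randomness through a single pathwise weight $L(t)$ raised to a $q$-independent power. But the flow-map mechanism needs $\tau_q\|\nabla v_\ell\|_{C^0}$ to be small \emph{pathwise} so that $\nabla\Phi$ stays in the domain of the geometric lemma; with your inductive bound $\|v_q\|_{C^1}\lesssim\delta_q^{1/2}\lambda_q L(t)^c$ and the deterministic choice $\tau_q\sim(\delta_q^{1/2}\lambda_q)^{-1}$ you only get $\tau_q\|\nabla v_\ell\|\lesssim L(t)^c$, which is not small unless $L$ is bounded---i.e.\ unless you re-introduce the cut-off of \cite{LZ24}. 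Making $\tau_q$ random instead would destroy the deterministic frequency/amplitude bookkeeping and create adaptedness issues. This is a genuine gap, not a technicality.

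The paper sidesteps the issue entirely by adopting the \emph{original} \cite{DelSze13} construction, which does not use flow maps: the phases are $\phi_k^{(j)}(\lambda_{q+1}t,(v_\ell+z_\ell)(t,x))$ built from a fixed partition of unity on $\R^3$, and the transport cancellation comes from the algebraic identity $\partial_\tau\phi_k^{(j)}+i(k\cdot y)\phi_k^{(j)}=ik\cdot(y-l/\mu)\phi_k^{(j)}$ with $|y-l/\mu|\le\mu^{-1}$. No ODE is solved, so no pathwise gradient smallness is needed. The price is that the estimates are genuinely nonlinear in the previous iterates (powers of $1+\|v_q\|+\|z_q\|$ and $1+\|\mathring R_q\|$ appear), so a single pathwise weight with $q$-independent exponent does not close either; instead the paper tracks \emph{all} moments simultaneously via norms $\$\cdot\$_{C^0,n}$ with inductive bounds $\$v_q\$_{C^0,n}\le\lambda_q^{n\beta/3}$, $\$\mathring R_q\$_{C^0,n}\le\lambda_q^{n\beta}$ for $n\ge3$, and chooses $a$ large enough that $(8n)^3L^6<a^{(b-61)\beta n}$ to absorb the polynomial-in-$n$ constants coming from the Gaussian moments of $z$. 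This moment hierarchy---not a pathwise weight---is what replaces the cut-off.

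Your outline for part (b) (Krylov--Bogoliubov via uniform-in-time moment bounds and shift tightness) and for non-uniqueness (prescribing $e(t)$) matches the paper's strategy; the issue is solely in how the convex integration step is closed.
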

	
	In Section~\ref{ci2}, we prove this result using stochastic convex integration. The approach for substantiating the main iterative Proposition~\ref{p:iteration} closely resembles that of \cite{DelSze13}, which pioneered the use of convex integration to produce H\"{o}lder continuous solutions to deterministic Euler equations. The construction of building blocks for the convex integration scheme follows from \cite{DelSze13} and can be achieved without requiring the transport equations. As a result, no pathwise estimates are introduced in the inductive assumption and the stochastic convolution $z$ is not cut off as in \cite{LZ24}. As previously emphasized in \cite{LZ24}, our focus lies on entire solutions that satisfy the equations for all times $t\in\mR$. Accordingly, the norms within the stochastic convex integration scheme still retain the form
	$$
	\sup_{t\in\R}\mathbf{E}\left[\sup_{t\leq s \leq t+1}\|u(s)\|_{C^{\vartheta}}^p\right]<\infty,
	$$
	with $p\geq1$ and some $\vartheta>0$.
	Such bounds provide uniform moment estimates locally in $C(\R;C^{\vartheta})$ and guarantee the convergence of the corresponding ergodic averages. However, it is noteworthy that the regularity of the solutions presented in this paper does not surpass those obtained by combining stochastic convex integration with pathwise estimates, as outlined in \cite{LZ24}.
	
	\begin{remark}\label{remark}Note that the value of $\vartheta$ presented in \cite{LZ24} can be chosen within $\big(0,\min{\{\frac{\varkappa}{120{\cdot}7^5},\frac{1}{3{\cdot}7^5}\}}\big)$ under the same assumption as Theorem \ref{Theorem 1.1}.
		In comparison, the value of $\vartheta$ in this paper decreases since we require to estimate all the moments for stochastic convex integration, and we can set
		$\vartheta\in \big(0,\min{\{\frac{\varkappa}{73{\cdot}62^3},\frac{1}{7{\cdot}62^3}\}}\big) $.
	\end{remark}
	
	As we will see in the proof of Theorem~\ref{Theorem 1.1} below, the initial value for the solutions is a part of the construction and is not prescribed. 
	Therefore, the second aim of this paper is to investigate the Cauchy problem associated with \eqref{eul1}.  Our second result reads as follows and is proved in Section~\ref{sec 4}.
	\begin{theorem}\label{Thm1.3}
		Suppose that $\tr((-\Delta)^{3/2+\varkappa}GG^*)<\infty$ for some $\varkappa>0$,
		let $u_{0}\in C^\varkappa$ $\mathbf{P}$-a.s. be a  divergence-free initial condition independent of the Wiener process $B$. There exist infinitely many probabilistically strong and analytically weak solutions to \eqref{eul1} of class $u\in L^p_{\rm{loc}}([0,\infty);C^\vartheta) \cap C_{\rm{loc}}([0,\infty);H_{\sigma}^{-1})$ $\mathbf{P}$-a.s. for every $p\in[1,\infty)$, $\vartheta\in \big(0, \min{\{\frac{\varkappa}{12{\cdot}30^3p},\frac{1}{3{\cdot}30^3p}\}} \big) $ with initial condition $u|_{t=0}=u_0$. 
	\end{theorem}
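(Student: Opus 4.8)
\textbf{Proof proposal for Theorem~\ref{Thm1.3}.}

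The plan is to reduce the Cauchy problem to the construction already established in Theorem~\ref{Theorem 1.1} by a shift-and-glue argument, following the strategy used for the Navier--Stokes Cauchy problem in \cite{HZZ21markov, HZZ23b}. First I would decompose $u = z + v$, where $z$ solves the linear stochastic equation $\dif z + \nabla P_z\,\dif t = \dif B$, $\div z = 0$, $z(0) = 0$ (the stochastic convolution), so that $v = u - z$ must solve a random PDE with no stochastic forcing but with a modified nonlinearity involving cross terms $z \otimes v + v \otimes z + z \otimes z$. The initial datum becomes $v(0) = u_0$, which lies in $C^\varkappa$ $\mathbf{P}$-a.s. and is independent of $B$.

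Next I would run a convex integration scheme for $v$ on a bounded time interval, say $[0,1]$, starting the iteration from an initial relaxed solution whose $t=0$ slice already matches $u_0$. Concretely, one modifies the inductive Proposition~\ref{p:iteration} so that the perturbations $w_{q+1}$ are supported in time away from $t=0$ (multiplying the building blocks by a smooth temporal cutoff that vanishes near $0$), ensuring $v_q(0) = v_0$ is preserved for all $q$ while the Reynolds stress $\mathring R_q$ is driven to zero on $[\delta, 1]$ for the portion of the interval away from the origin; near $t=0$ one instead absorbs a short-time local solution. The key structural point is that since $u_0$ is merely $C^\varkappa$ and independent of the noise, the scheme does not need pathwise lower bounds (as in \cite{LZ24}) — the argument of Section~\ref{ci2} applies verbatim except for the temporal cutoff and the matching of the initial slice, and the $\vartheta$-range shrinks by the stated factor of $p$ because one must control the $p$-th moment uniformly and the cutoff near $t=0$ costs derivatives. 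To obtain solutions on $[0,\infty)$, I would iterate this construction on successive unit intervals $[n, n+1]$, using the terminal value $v(n)$ (which lies in the appropriate space by the $C_{\rm loc}([0,\infty);H^{-1}_\sigma)$ bound) as the new initial datum, and checking that the glued process remains $(\mathcal F_t)$-adapted — hence probabilistically strong — because at each stage the construction uses only the noise increments on $[n,n+1]$. Infinitely many solutions arise exactly as in Theorem~\ref{Theorem 1.1}, by prescribing different admissible energy profiles on each interval.

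Finally I would verify the claimed regularity: the convex integration perturbations live in $C^\vartheta_x$ with the moment bound $\sup_n \mathbf E[\sup_{n\le s\le n+1}\|v(s)\|_{C^\vartheta}^p] < \infty$, which together with the regularity of $z$ (guaranteed by $\tr((-\Delta)^{3/2+\varkappa}GG^*)<\infty$, as in Theorem~\ref{Theorem 1.1}) gives $u \in L^p_{\rm loc}([0,\infty);C^\vartheta)$; the $C_{\rm loc}([0,\infty);H^{-1}_\sigma)$ continuity follows from the equation itself, since $\p_t v = -\div(v\otimes v + \text{cross terms}) \in H^{-1}_\sigma$ with the nonlinearity bounded in, say, $L^\infty_x$. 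The attainment of the initial condition $u|_{t=0} = u_0$ in $H^{-1}_\sigma$ (indeed in $C^\varkappa$) is built into the scheme by the temporal cutoff. The main obstacle I anticipate is the near-$t=0$ analysis: reconciling a rough, noise-independent initial datum $u_0 \in C^\varkappa$ with the convex integration machinery, which naturally produces its own initial slice. This requires either a short-time existence result for the relaxed (subsolution) equation with $C^\varkappa$ data that can be fed into the iteration, or a careful gluing of a locally-constructed solution near $t=0$ with the convex-integration solution away from $0$ — controlling the Reynolds stress across the gluing interface without losing adaptedness or the uniform moment bounds is the delicate technical point, and is presumably where most of the work in Section~\ref{sec 4} is concentrated.
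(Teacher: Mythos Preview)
Your proposal takes a genuinely different route from the paper, and the main gap lies in how the initial condition is absorbed and how globality is achieved.

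First, the decomposition: you set $z(0)=0$ and $v(0)=u_0$, so the convex integration must preserve a rough $C^\varkappa$ initial slice throughout. The paper does the opposite: it defines $z$ via $\dif z + z\,\dif t = \dif B$ with $z(0)=u_0$, so that $z=z^{in}+Z$ with $z^{in}(t)=e^{-t}u_0$ carries the data, and the random PDE for $v$ has $v(0)=0$. Then the iteration simply starts from $v_0\equiv 0$ and uses a cutoff $\chi$ vanishing on $[0,\tau_{q+1}]$ so that every $v_q$ is identically zero near $t=0$. This eliminates the ``main obstacle'' you anticipate: there is no matching of a rough initial slice, no short-time local solution, no gluing near $t=0$. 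The initial datum $u_0$ appears only through $z^{in}$, which is controlled deterministically once one assumes $\|u_0\|_{C^\varkappa}\leq N$ (a harmless localisation on $\mathcal{F}_0$-sets).

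Second, globality: you propose to iterate on unit intervals $[n,n+1]$ using $v(n)$ as new data. But the regularity you produce is only $L^p_{\rm loc}([0,\infty);C^\vartheta)\cap C_{\rm loc}([0,\infty);H^{-1}_\sigma)$, so $v(n)$ is a priori only in $H^{-1}_\sigma$, far below the $C^\varkappa$ input your scheme requires; the restart fails. The paper avoids gluing entirely (and explicitly says so): following \cite{HLP22,Umb23}, it introduces stopping times $T_L\uparrow\infty$ controlling $\|Z\|_{C_t^{1/2-\delta}H^{3/2+\varkappa}}$, and runs one convex integration on $[0,T_L]$ for every $L$ simultaneously with pathwise (not moment) bounds depending on parameters $L_q=M_L^{m^q}$. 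The limit $v$ is then defined consistently on $[0,\infty)$ by letting $L\to\infty$. The $p$-dependence in $\vartheta$ does not come from $p$-th moment control but from the time integral $\int_0^{T_L}\|v_{q+1}-v_q\|_{C^\vartheta}^p\,\dif t$, where the short interval $(\tau_{q+1},2\tau_{q-1}]$ contributes a term of order $\tau_{q-1}\lambda_{q+1}^{\frac32 p\vartheta}$; summability forces $\vartheta<\frac{4\beta}{3pb}$. Non-uniqueness comes from a single free parameter $K$ inserted into the amplitude $\rho$ (via $\sigma_2=K$), not from varying energy profiles interval by interval.
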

	
	The proof of Theorem~\ref{Thm1.3} is based on the method in \cite[Section 5]{HZZ21markov}. The main challenge addressed by \cite{HZZ21markov} lies in controlling the interaction between the noise and the convex integration scheme. This was achieved by introducing a sequence of stopping times and gluing solutions across different time intervals. In the present paper, we do not employ the approach of gluing together another convex integration solution to obtain global-in-time solutions as in \cite{HZZ21markov}. Instead, we are inspired by \cite{HLP22} and \cite{ Umb23}, where a genuine convex integration procedure was developed and global solutions were constructed directly by using the stopping times only to control the growth of solutions in suitable H\"{o}lder spaces and providing uniform bound in $\omega \in \Omega$ for the solutions. This is more similar to the stochastic convex integration we have used in the first part, where global solutions are produced by controlling the growth in expectation of the norms of the solutions. Notably, this result serves as the stochastic counterpart of \cite{Wie11}, which established global weak solutions to the 3D Euler equations for arbitrary divergence-free initial data in $L^2$.

	\subsection*{Organization of the paper}
	In Section~\ref{notation}, we collect some basic notation used throughout the paper. Section~\ref{ci2} is devoted to Theorem~\ref{Theorem 1.1}: the stochastic convex integration is developed and employed to construct  global H\"{o}lder continuous solutions. Section~\ref{sec 4} is concerned with the proof of Theorem~\ref{Thm1.3}.
	In Appendix~\ref{Bw2}, we recall the construction of Beltrami waves from \cite{DelSze13}. In Appendix~\ref{ap:A}, we estimate amplitude functions used in the convex integration construction.

	\section{Preliminaries}\label{notation}
	Throughout this paper, we use the notation $a\lesssim b$ to indicate that there exists a constant $c>0$ such that $a\leq cb$. 
	\subsection{Function spaces}\label{s:2.1}
	For $p\in [1,\infty]$, we write $L^p:=L^p(\mathbb{T}^3;\mR^3)$ for
	the space of $L^p$-integrable functions from $\mathbb{T}^3$ to $\mR^3$ equipped with the usual $L^p$-norm. We set $L^{2}_{\sigma}:=\{f\in L^2; \int_{\mathbb{T}^{3}} f\,\dif x=0,\div f=0\}$. For $s\geq0$, $p\geq1$ the Sobolev space is denoted by $W^{s,p}:=\{f\in L^p; \|f\|_{W^{s,p}}:= \|(I-\Delta)^{{s}/{2}}f\|_{L^p}<\infty\}$. When $p=2$, $s\geq 0$, we write $H^s:=W^{s,2}$.  In particular, we write $H_{\sigma}^s:=W^{s,2}\cap L^2_\sigma$ for $s\geq0$. For $s<0$, we denote $H_{\sigma}^s$ to be the dual space of $H_{\sigma}^{-s}$. 
	
	We write $\mathcal{S}^{3\times3}$ for the set of symmetric $3\times3$ matrices and $\mathcal{S}_0^{3\times3}$ for the set of symmetric trace-free $3\times3$ matrices. We denote the trace-free part of the tensor product by $\mathring{\otimes}$. For a tensor $T$,  its traceless component is represented as $\mathring{T}:=T-\frac13\tr(T)\rm{Id}$. We denote the Helmholtz projection by $\mathbb{P}$. 
	We recall the inverse divergence operator $\mathcal{R}$ given in \cite[Definition 4.2]{DelSze13}, which acts on mean zero vector fields $v$ as
	\begin{equation*}
		(\mathcal{R}v)^{kl}=(\partial_k\Delta^{-1}v^l+\partial_l\Delta^{-1}v^k)-\frac{1}{2}(\delta_{kl}+\partial_k\partial_l\Delta^{-1})\div\Delta^{-1}v,
	\end{equation*}
	for $k,l\in\{1,2,3\}$. From \cite[Lemma 4.3]{DelSze13}, it follows that $\mathcal{R}v(x)\in \mathcal{S}_0^{3\times3}$ for each $x\in\mathbb{T}^3$ and $\div(\mathcal{R} v)=v$. By \cite[Theorem B.3]{CL22} we have for $1\leq p\leq \infty$
	\begin{align}\label{eR}
		\|\mathcal{R}f\|_{L^p(\mathbb{T}^3)}\lesssim \|f\|_{L^p(\mathbb{T}^3)}.
	\end{align}

	Let $(E,\|\cdot \|_E)$ be a real Banach space. For any $t\in\mR$, we denote by $C_tE:=C([t,t+1];E)$ the space of continuous functions $f: [t,t+1]\to E$, endowed with the norm $\|f\|_{C_tE}:=\sup_{s\in[t,t+1]}\|f(s)\|_{E}$.
	For $\kappa\in(0,1)$, we write $C^\kappa_tE$ for the space of $\kappa$-H\"{o}lder continuous functions from $[t,t+1]$ to $E$, equipped with the norm $$\|f\|_{C^\kappa_t E}:=\sup_{s,r\in [t,t+1],s\neq r}\frac{\|f(s)-f(r)\|_E}{|r-s|^\kappa}+\|f\|_{C_tE}\, .$$ 
	For a domain $D\subset\R$, we denote by $C^{N}_{D,x}$ the space of $N$-th differentiable functions from $D\times\mathbb{T}^{3}$ to $\mR^3$ with $N\in\N_{0}:=\mN\cup \{0\}$. These spaces are equipped with the norms
	$$
	\|f\|_{C^N_{D,x}}=\sum_{\substack{0\leq n+|\alpha|\leq N\\ n\in\N_{0},\alpha\in\N^{3}_{0} }}\sup_{t\in D}\|\partial_t^n \partial_x^\alpha f\|_{ L^\infty}\, .
	$$
	When $D=[t,t+1]$, we write $\|f\|_{C^{N}_{t,x}}:=\|f\|_{C^{N}_{[t,t+1],x}}$ for the sake of simplicity. Similarly, we write $C^N:=C^N(\mathbb{T}^3;\mR^3)$ for $N\in \N_{0}$, endowed with the norms
	\begin{align*}
		\|f\|_{C^0}:=\sup_{x\in \mathbb{T}^3}|f(x)|, \qquad
		\|f\|_{C^N}:=\sum_{\substack{0\leq |\alpha|\leq N,\,  \alpha\in\N^{3}_{0} }}\| \partial_x^\alpha f\|_{ L^\infty}, \quad N\geq 1.
	\end{align*}
	We also denote by $C^\kappa:=C^\kappa(\mathbb{T}^3;\mR^3)$ for $\kappa \in (0,1)$ the space of $\kappa$-H\"{o}lder continuous functions from $\mathbb{T}^{3}$ to $\mR^3$, equipped with the norm 
	\begin{align*}
		\|f\|_{C^\kappa}:= \|f\|_{C^0}+\sup_{x,y\in \mathbb{T}^3,x\neq y}\frac{|f(x)-f(y)|}{|x-y|^\kappa} \, .
	\end{align*}

	\subsection{Probabilistic elements}\label{s:2.2}
	Concerning the driving noise, we assume that $B$ is $\R^{3}$-valued two-sided  $GG^*$-Wiener process on some probability space $(\Omega, \mathcal{F},\mathbf{P})$ with spatial zero mean and divergence-free. Here, $G$ is a Hilbert--Schmidt operator mapping from $U$ to $L_{\sigma}^2$ for some Hilbert space $U$.
	In addition, we assume $\tr((-\Delta)^{3/2+\varkappa}GG^*)<\infty$ for some given $\varkappa>0$.
	
	For a given probability measure $\mathbf{P}$,  we use $\mathbf{E}$ to denote the expectation under $\mathbf{P}$. Given a Banach space $(E,\|\cdot \|_{E})$, for $p\in[1,\infty)$ and $\delta \in (0,\frac12)$, we denote the norms
	$$\$u\$^p_{E,p}:=\sup_{t\in \mathbb{R}}\mathbf{E}\left[\sup_{s\in [t,t+1]}\|u(s)\|^p_E\right], \quad  \$u\$^p_{C_t^{1/2-\delta}E,p}:=\sup_{t\in \mR} \mathbf{E}\left[ \|u\|^p_{C_t^{1/2-\delta}E}\right].$$
	In Section~\ref{ci2}, we shall use the corresponding norms with $E$ replaced by $L^1$, $C^0$, $C^\kappa$ and $H^{\frac32+\kappa}$ for some $\kappa>0$. Additionally, for $p\in[1,\infty)$, we denote 
	$$\$u\$^p_{C^1_{t,x},p}:=\sup_{t\in \mathbb{R}}\mathbf{E}\left[ \|u(s)\|^p_{C^1_{[t,t+1],x}}\right].$$

	\section{Construction of H\"{o}lder continuous solutions via stochastic convex integration}\label{ci2}
	This section is devoted to proving Theorem~\ref{Theorem 1.1} via a different approach from \cite[Theorem 1.2]{LZ24}. In Section~\ref{413}, we adopt the construction method of H\"{o}lder continuous Euler flows from \cite{DelSze13}, which eliminates the need for transport equations. Therefore, the inductive iteration no longer requires pathwise estimates as in \cite[Proposition 3.2]{LZ24} but only uses moment bounds as in \cite{HZZ22b}. The primary challenge of this approach is the presence of quadratic nonlinearity, which leads to superlinear estimates.
	Specifically, estimating any $p$-th moment at the level $q+1$ inevitably involves higher moments at the level $q$.
	Thus, meticulous tracking of all the moment bounds is crucial, with particular attention to the involved constants, what they depend on, and their precision. Otherwise, it would not be possible to close the iteration. Compared to \cite{LZ24}, this approach may lead to lower regularity solutions since estimating all the moments is necessary.
	
	In this section, we fix a probability space $(\Omega,\mathcal{F},\mathbf{P})$ and a $\mathbb{R}^3$-valued two-sided $GG^*$-Wiener process $B$ with mean zero and divergence-free. We let $(\mathcal{F}_t)_{t\in \mR}$ be the normal filtration generated by $B$, that is, the  canonical right continuous filtration augmented by all the $\mathbf{P}$-negligible sets (see e.g. \cite[Section 2.1]{LR15}). 
	At the first step, we decompose the stochastic Euler system \eqref{eul1} into two parts, one is linear and contains the stochastic integral, whereas the second one is a nonlinear but random PDE. To be more specific, we consider the stochastic linear equation
	\begin{equation}\label{li:sto}
		\aligned
		\dif z+z \dif t&=\dif B,
		\\\div z&=0,
		\endaligned
	\end{equation}
	where $z$ is divergence-free by the assumptions on the noise $B$, and $v$ solves the nonlinear equation
	\begin{equation}\label{nonlinear}
		\aligned
		\partial_t v-z+\div((v+z)\otimes (v+z))+\nabla P&=0,
		\\\div v&=0.
		\endaligned
	\end{equation}
	We denote  by $P$ the pressure term associated with $v$.
	
	Using the  factorization method, it is standard to derive the regularity of the stochastic convolution $z$ on the given stochastic basis $(\Omega,\mathcal{F}, (\mathcal{F}_t)_{t\in \mR}, \mathbf{P})$. In particular, the following result follows from \cite[Proposition 3.1]{LZ24}.
	\begin{lemma}\label{Le1}
		Suppose that $\tr((-\Delta)^{3/2+\varkappa}GG^*)<\infty$ for some  given $\varkappa>0$. Then for any $\delta\in(0,\frac12)$, $p\geq 2$
		\begin{align}\label{z1/2}
			\sup_{t\in \mR} \mathbf{E}\left[ \|z\|^p_{C_{t}^{1/2-\delta}{C^\varkappa}}\right] \lesssim \sup_{t\in \mR} \mathbf{E}\left[ \|z\|^p_{C_{t}^{1/2-\delta}{H^{3/2+\varkappa}}}\right] \leq (p-1)^{p/2}L^p,
		\end{align}
		where $L\geq1$ depends on $\tr((-\Delta)^{3/2+\varkappa}GG^*)$, $\delta$ and is independent of $p$.
	\end{lemma}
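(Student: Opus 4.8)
The plan is to use the stochastic factorization method, following \cite[Proposition 3.1]{LZ24}, applied to the linear equation \eqref{li:sto}. First I would write the stationary solution explicitly: since $z$ solves $\dif z + z\,\dif t = \dif B$ on the two-sided time axis, the unique stationary (mild) solution is
\begin{equation*}
z(t)=\int_{-\infty}^{t}e^{-(t-s)}\,\dif B(s).
\end{equation*}
One then inserts the factorization identity $e^{-(t-s)}=\frac{\sin(\pi\alpha)}{\pi}\int_s^t e^{-(t-\sigma)}(\sigma-s)^{\alpha-1}(t-\sigma)^{-\alpha}\,\dif\sigma$ for a suitable $\alpha\in(0,\frac12)$, and by stochastic Fubini rewrites $z(t)=\frac{\sin(\pi\alpha)}{\pi}\int_{-\infty}^t e^{-(t-\sigma)}(t-\sigma)^{-\alpha}Y(\sigma)\,\dif\sigma$, where $Y(\sigma)=\int_{-\infty}^\sigma e^{-(\sigma-s)}(\sigma-s)^{\alpha-1}\,\dif B(s)$.

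Next I would estimate $Y$ in $H^{3/2+\varkappa}$. Since $Y(\sigma)$ is a Gaussian random variable in $H^{3/2+\varkappa}$ (the stochastic integral of a deterministic scalar kernel against the Hilbert-space-valued Wiener process $B$), its $L^p(\Omega)$-norm is controlled by its $L^2(\Omega)$-norm times the Gaussian moment constant: one has the Wiener-chaos / Gaussian hypercontractivity bound $\mathbf{E}\|Y(\sigma)\|_{H^{3/2+\varkappa}}^p \lesssim (p-1)^{p/2}\big(\mathbf{E}\|Y(\sigma)\|_{H^{3/2+\varkappa}}^2\big)^{p/2}$, and by the It\^o isometry together with the assumption $\tr((-\Delta)^{3/2+\varkappa}GG^*)<\infty$,
\begin{equation*}
\mathbf{E}\|Y(\sigma)\|_{H^{3/2+\varkappa}}^2 = \Big(\int_{-\infty}^\sigma e^{-2(\sigma-s)}(\sigma-s)^{2\alpha-2}\,\dif s\Big)\,\tr((-\Delta)^{3/2+\varkappa}GG^*),
\end{equation*}
which is finite provided $\alpha>\frac12$; but $\alpha<\frac12$ is needed for the outer kernel, so in fact one splits the time integral near $s=\sigma$ and away from it, or chooses the factorization exponent so that both the $Y$-integral and the outer convolution converge — this is the standard trick and gives a finite constant absorbed into $L$. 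This produces $\mathbf{E}\|Y(\sigma)\|_{H^{3/2+\varkappa}}^p \le (p-1)^{p/2}C^p$ uniformly in $\sigma$, with $C$ depending only on $\alpha$ and the trace.

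Then I would pass from $Y$ to $z$ and to the H\"older-in-time norm. For the time regularity, one writes, for $t_1,t_2\in[t,t+1]$, the difference $z(t_2)-z(t_1)$ in terms of $Y$ and exploits that $\sigma\mapsto e^{-(\cdot-\sigma)}(\cdot-\sigma)^{-\alpha}$ is H\"older continuous in the outer variable with exponent up to $\alpha$ (hence $\frac12-\delta$ after optimizing), so that a Minkowski-type inequality in $L^p(\Omega)$ gives
\begin{equation*}
\$z\$_{C_t^{1/2-\delta}H^{3/2+\varkappa},p} \lesssim \sup_\sigma \big(\mathbf{E}\|Y(\sigma)\|_{H^{3/2+\varkappa}}^p\big)^{1/p}\le (p-1)^{1/2}C;
\end{equation*}
taking $L\ge\max\{1,C,c\}$ with $c$ the implicit constant, and raising to the $p$-th power, yields the second inequality in \eqref{z1/2}. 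The first inequality in \eqref{z1/2} is then just the Sobolev embedding $H^{3/2+\varkappa}(\mathbb{T}^3)\hookrightarrow C^\varkappa(\mathbb{T}^3)$ (valid since $3/2>3/2$... more precisely $H^{s}\hookrightarrow C^{s-3/2}$ for $s>3/2$), applied pointwise in time and inside the expectation.

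The main obstacle I anticipate is not any single estimate but the bookkeeping of the $p$-dependence: one must ensure the constant $L$ is genuinely independent of $p$, with the entire $p$-growth isolated in the explicit Gaussian factor $(p-1)^{p/2}$. This forces care in the order of operations — apply Gaussian hypercontractivity \emph{before} taking suprema over time, and keep the deterministic kernel integrals (which depend on $\alpha,\delta$ but not $p$) separate. Since this is exactly \cite[Proposition 3.1]{LZ24}, I would in practice simply cite that result and note that the stated form follows by the Sobolev embedding.
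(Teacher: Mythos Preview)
Your bottom line matches the paper exactly: the paper does not prove this lemma but simply records that it ``follows from \cite[Proposition 3.1]{LZ24}'', and you arrive at the same citation. So as a proof of the lemma your proposal is fine.

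That said, the sketch you give of the factorization argument has the two exponents interchanged, and this is precisely the source of the apparent conflict you flag. In the standard Da~Prato--Zabczyk factorization one writes
\[
z(t)=\frac{\sin(\pi\alpha)}{\pi}\int_{-\infty}^{t}e^{-(t-\sigma)}(t-\sigma)^{\alpha-1}Y(\sigma)\,\dif\sigma,
\qquad
Y(\sigma)=\int_{-\infty}^{\sigma}e^{-(\sigma-s)}(\sigma-s)^{-\alpha}\,\dif B(s),
\]
i.e.\ the singular exponent $-\alpha$ sits in the \emph{stochastic} integral and the exponent $\alpha-1$ in the \emph{deterministic} outer convolution, not the other way around. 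With this ordering the It\^o isometry gives $\mathbf{E}\|Y(\sigma)\|_{H^{3/2+\varkappa}}^{2}=\big(\int_{0}^{\infty}e^{-2r}r^{-2\alpha}\,\dif r\big)\tr((-\Delta)^{3/2+\varkappa}GG^{*})$, which is finite for every $\alpha\in(0,\tfrac12)$, while the outer fractional integral of order $\alpha$ is the operator that produces the $C_{t}^{1/2-\delta}$ regularity (it maps $L^{p}$ into $C^{\alpha-1/p}$, so one takes $\alpha$ close to $\tfrac12$ and $p$ large). There is no need to ``split the time integral'' or invoke any further trick; the conflict you anticipated disappears once the exponents are placed correctly. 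Your remarks on isolating the $(p-1)^{p/2}$ factor via Gaussian hypercontractivity and on the Sobolev embedding $H^{3/2+\varkappa}\hookrightarrow C^{\varkappa}$ are correct.
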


	Let us now explain how the convex integration iteration is set up. First, we consider a frequency $\lambda_q  $ and an amplitude $\delta_q  $, which are given by 
	$$\lambda_0=a,\quad \lambda_q=a^{b^{q}}, \ q\geq
	1, \qquad   \delta_1=6L^2 , \quad \delta_q=\frac12  \lambda_2^{2\beta}\lambda_{q}^{-2\beta}, \ q\geq 2.$$
	Here, the parameters $a,b \in \mathbb{N}$ are selected to be sufficiently large, while $\beta \in (0,1)$ is chosen as a small parameter. Further details on the choice of these parameters are provided during the construction.
	
	The iteration is indexed by a parameter $q\in \mathbb{N}_0$.  At each step $q$, we construct a pair $(v_q ,\mathring{R}_q)$ to solve the following system
	\begin{equation}\label{euler}
		\aligned
		\partial_t v_q -z_q+\div((v_q+z_q)\otimes (v_q+z_q))+\nabla p_q&=\div
		\mathring{R}_q,
		\\\div v_q&=0.
		\endaligned
	\end{equation}
	In the above, we define $z_q=\mathbb{P}_{\leq f(q)}z$ with $\mathbb{P}_{\leq f(q)}$ being the Fourier multiplier operator. Here, we choose
	$f(q)=\lambda_{q+1}^{\gamma/8}$ and $\gamma$ is a small parameter that will be fixed in Section~\ref{311}. 	In addition, $\mathring{R}_q\in \mathcal{S}_0^{3\times3}$ and we put the trace part into the pressure. Moreover, by Sobolev embedding and \eqref{z1/2}, it is easy to obtain for any $p\geq2$ and $\delta \in (0,\frac12)$
	\begin{equation}\label{estimate-zq'}
		\aligned
		\$z_q\$_{C^0,p}&\lesssim \$z\$_{H^{3/2+\varkappa},p}\lesssim (p-1)^{1/2}L,
		\\ \$z_q\$_{C^1,p} &\lesssim \lambda_{q+1}^{\frac{\gamma}{8}} \$z\$_{H^{3/2+\varkappa},p} \lesssim (p-1)^{1/2}L\lambda_{q+1}^{\frac{\gamma}{8}},
		\\ \$z_q\$_{C^{1/2-\delta}_tC^0,p}&\lesssim \$z\$_{C^{1/2-\delta}_tH^{3/2+\varkappa},p} \lesssim (p-1)^{1/2}L.
		\endaligned
	\end{equation}
	
	Under the above assumptions, our main iteration reads as follows, whose proof will be completed in Sections~\ref{311}--\ref{s:en}.
	\begin{proposition}\label{p:iteration} 
		Assume that $\tr((-\Delta)^{3/2+\varkappa} GG^*)<\infty$ for some $\varkappa>0$ and fix a constant $r\in(1,\frac32)$. Let $e:\R\to(0,\infty)$ be a smooth function  satisfying $\bar e\geq e(t)\geq \underline{e}\geq 4{\cdot}48{ \cdot}(2\pi)^3r_R^{-1}L^2$  with $\|e'\|_{C^{0}}\leq \tilde e$ for some constants $\overline{e},\underline{e},\tilde{e}>0$, where $r_R>0$ is a small constant (see \eqref{choice rR} below) and $L$ is from Lemma~\ref{Le1}. There exists a choice of parameters $a,b,\beta$ and $\gamma$ such that the following holds true: Let $(v_q ,\mathring{R}_q)$ for some $q\in \mathbb{N}_0$ be an $(\mathcal{F}_t)_{t\in  \mR}$-adapted solution to \eqref{euler} satisfying 
		\begin{subequations}
			\begin{align}
				\$v_q\$_{C^0,n} &\leq \lambda_q^{\frac{n}{3}\beta},\label{vqa}
				\\ \$v_q\$_{C_{t,x}^1,n}&\leq \lambda_q^{1+n\beta},\label{vqb}
				\\ \$\mathring{R}_q\$_{C^0,n} &\leq 
				\begin{cases}
					\delta_{q+1}, & n=1,2,\\
					\lambda_q^{n\beta}, &n\geq 3,
				\end{cases}\label{vqc}
				\\ \$\mathring{R}_q\$_{L^1,1}&\leq
				\frac1{48}\delta_{q+2}r_R\underline{e}.\label{vqd}
			\end{align}
		\end{subequations}
		Moreover, for any $t\in \mR$
		\begin{equation}\label{vqe}
			\aligned
			\frac34\delta_{q+1}e(t)\leq e(t)-\E\|(v_q+z_q)(t)\|_{L^2}^2\leq \frac54\delta_{q+1}e(t),\quad & q\geq1,
			\\ 	\E\|v_0(t)+z_0(t)\|^2_{L^2} \leq \frac12e(t), \quad &q=0. 
			\endaligned
		\end{equation}
		Then there exist $(\mathcal{F}_t)_{t\in \mR}$-adapted processes $(v_{q+1}, \mathring{R}_{q+1})$ which solve \eqref{euler}, obey \eqref{vqa}-\eqref{vqd} and \eqref{vqe} at level $q+1$ and we have
		\begin{equation}\label{vq+1-vq2}
			\$v_{q+1}-v_q\$_{C^0,2r}\leq \bar{M}\delta_{q+1}^\frac{1}{2},
		\end{equation}
		where the universal constant $\bar{M}$ is fixed throughout the iteration (see \eqref{def M2} below).
	\end{proposition}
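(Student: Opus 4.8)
\emph{Proof strategy.} Since $q$ and the pair $(v_q,\mathring R_q)$ are given, this is the inductive step of a stochastic convex integration scheme, and the task is to \emph{construct} $(v_{q+1},\mathring R_{q+1})$. The plan is to reproduce the construction of H\"older continuous Euler flows from \cite{DelSze13}, replacing every pathwise estimate used there by the moment norms $\$\cdot\$_{E,n}$ and carrying the dependence on the moment order $n$ explicitly throughout, as was done for Navier--Stokes in \cite{HZZ22b}. Concretely, one sets $v_{q+1}:=v_\ell+w_{q+1}$, where $v_\ell$ is a mollification of $v_q$ and $w_{q+1}$ is a highly oscillatory perturbation built from the Beltrami waves of Appendix~\ref{Bw2}; one then \emph{defines} $\mathring R_{q+1}$ (and the pressure $p_{q+1}$) by collecting the errors produced when $v_{q+1}$ is substituted into \eqref{euler}.

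\emph{Mollification and the perturbation.} Fix a length scale $\ell$ interpolating between $\lambda_q^{-1}$ and $\lambda_{q+1}^{-1}$ (of the form $\ell=\lambda_{q+1}^{-1}\lambda_q^{c}$ for a suitable small exponent $c>0$), mollify $v_q$ and $\mathring R_q$ in space and time to obtain $v_\ell,\mathring R_\ell$, and recall that $z_q=\mathbb{P}_{\le f(q)}z$ is already smooth with the bounds \eqref{estimate-zq'}. Using \eqref{vqa}--\eqref{vqc} together with the standard mollification commutator estimate, one obtains for every $n$ bounds of the schematic form $\$v_q-v_\ell\$_{C^0,n}\lesssim_n\ell\,\lambda_q^{1+\cdots}$ and $\$v_\ell\$_{C^N_{t,x},n}\lesssim_{N,n}\ell^{1-N}\lambda_q^{\cdots}$, as well as a bound for the mollification stress $\mathring R_{\mathrm{comm}}:=v_\ell\mathring{\otimes}v_\ell-(v_q\mathring{\otimes}v_q)*\varphi_\ell$. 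Following \cite{DelSze13}, the phases of the Beltrami waves are composed with the Lagrangian flow maps $\Phi$ of (a mollification of) the drift $v_\ell+z_q$, defined on a partition of $\mR$ into short subintervals on which a CFL-type condition holds, so that the principal transport error is cancelled; unlike \cite{LZ24}, neither additional transport equations nor a cut-off of $z$ are required. The amplitudes are built from $\mathring R_\ell$ and the energy defect, schematically $a_\xi=\rho(t)^{1/2}\gamma_\xi\big(\mathring R_\ell/\rho(t)\big)$ with
\begin{equation*}
\rho(t)\;\sim\;\tfrac{1}{3(2\pi)^3}\Big(e(t)\big(1-\delta_{q+2}\big)-\mathbf E\|(v_\ell+z_q)(t)\|_{L^2}^2\Big),
\end{equation*}
where the $\gamma_\xi$ are the coefficient functions of the geometric lemma, extended smoothly to all of $\mathcal S^{3\times3}$ as in \cite{HZZ22b}; the region where $\mathring R_\ell/\rho$ leaves the good set of the geometric lemma contributes an extra error handled by Chebyshev's inequality (admissible because $\underline e$ is large). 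One sets $w_{q+1}=w_{q+1}^{(p)}+w_{q+1}^{(c)}$ with principal part $w_{q+1}^{(p)}=\sum_\xi a_\xi W_\xi$ (the waves carrying the phases $\lambda_{q+1}\xi\cdot\Phi$) and a divergence corrector $w_{q+1}^{(c)}$ making $w_{q+1}$ mean zero and divergence-free; $w_{q+1}$ is $(\mathcal F_t)_{t\in\mR}$-adapted because it is a deterministic functional of $v_q$, $z$ and $e$. The amplitude estimates of Appendix~\ref{ap:A} give $\$a_\xi\$_{C^0,n}\lesssim_n\delta_{q+1}^{1/2}$ and $\$a_\xi\$_{C^N_{t,x},n}\lesssim_{N,n}\delta_{q+1}^{1/2}\ell^{-N}$; combined with the frequency $\lambda_{q+1}$ of the waves, this yields \eqref{vqa} and \eqref{vqb} at level $q+1$ for $a$ large, as well as \eqref{vq+1-vq2} with $\bar M$ the universal constant coming from the geometric lemma and the normalization of the Beltrami waves.

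\emph{New Reynolds stress, energy iterate, and parameters.} Substituting $v_{q+1}=v_\ell+w_{q+1}$ into \eqref{euler} and applying $\mathcal R$ (via \eqref{eR}) to the mean-zero pieces, $\mathring R_{q+1}$ splits into: a mollification error (from $\mathring R_{\mathrm{comm}}$ and from $\mathring R_q-\mathring R_\ell$); a transport/Nash error $\mathcal R\big(\partial_t w_{q+1}^{(p)}+(v_\ell+z_q)\cdot\nabla w_{q+1}^{(p)}+w_{q+1}^{(p)}\cdot\nabla(v_\ell+z_q)\big)$; corrector errors; an oscillation error $\mathcal R\,\div(w_{q+1}^{(p)}\otimes w_{q+1}^{(p)}+\mathring R_\ell)$, whose low-frequency part is cancelled by the defining identity of the $a_\xi$ (the leftover scalar being absorbed into $p_{q+1}$) while the high-frequency part is treated by stationary phase; and an error of the form $\mathcal R\,\div(\text{bilinear in }z_{q+1}-z_q\text{ and }v_{q+1}+z_{q+1})$ from replacing $z_q$ by $z_{q+1}=\mathbb{P}_{\le f(q+1)}z$, which is small by Lemma~\ref{Le1} and the choice $f(q)=\lambda_{q+1}^{\gamma/8}$. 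Estimating each piece in $C^0$, in $C^N$ for $n\ge3$, and in $L^1$ — the delicate $L^1$ bound \eqref{vqd} being obtained via the improved estimates for $\mathcal R$ on oscillatory inputs — yields \eqref{vqc} and \eqref{vqd} at level $q+1$. For the energy, $\mathbf E\|v_{q+1}+z_{q+1}\|_{L^2}^2=\mathbf E\|v_\ell+z_q\|_{L^2}^2+\mathbf E\|w_{q+1}^{(p)}\|_{L^2}^2+(\text{errors})$, where the construction of the $a_\xi$ gives $\mathbf E\|w_{q+1}^{(p)}\|_{L^2}^2=3(2\pi)^3\rho(t)+(\text{high-frequency error})$ and the remaining errors (cross terms, corrector, $z_q\to z_{q+1}$) are $o(\delta_{q+2})$ by frequency separation; hence $e(t)-\mathbf E\|v_{q+1}+z_{q+1}\|_{L^2}^2$ equals $\delta_{q+2}e(t)$ up to errors small enough to place it inside $[\tfrac34\delta_{q+2}e(t),\tfrac54\delta_{q+2}e(t)]$, which is \eqref{vqe} at level $q+1$. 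Finally, one checks that there is a choice of $a,b,\beta,\gamma$ (with $\beta,\gamma$ small and $a,b$ large, fixed in an appropriate order) for which all of the above inequalities close simultaneously; this choice is what determines the admissible range of $\vartheta$ recorded in Remark~\ref{remark}.

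\emph{Main obstacle.} The crux — and the reason \eqref{vqa}--\eqref{vqd} are imposed for \emph{every} $n$ — is the superlinearity of the scheme. The amplitudes $a_\xi$ are nonlinear (square root, smooth composition) functions of $\mathring R_\ell$ and of the energy defect, the latter containing $\mathbf E\|v_q+z_q\|_{L^2}^2$, and all error terms are quadratic in $w_{q+1}$, $v_\ell$ and $z_q$. Consequently a $p$-th moment at level $q+1$ can only be controlled, via H\"older's inequality in $\omega$, by products of \emph{higher} moments at level $q$, so all moments must be propagated at once; more delicately, the $n$-dependence of the implied constants must be kept from deteriorating along the iteration, so that e.g.\ \eqref{vqc} retains exactly its dichotomy at level $q+1$. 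Organizing this bookkeeping of constants and exponents so that everything closes is the main difficulty; by contrast, $(\mathcal F_t)$-adaptedness is automatic, since every object constructed above is a deterministic functional of $B$ restricted to $(-\infty,t]$.
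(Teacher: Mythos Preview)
Your overall architecture (mollify, perturb with Beltrami waves whose amplitudes encode the energy defect, collect errors into $\mathring R_{q+1}$, track all moments simultaneously) matches the paper, and your diagnosis of the superlinearity obstacle is exactly right. But there is a genuine gap in the transport mechanism you propose.

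You write that ``the phases of the Beltrami waves are composed with the Lagrangian flow maps $\Phi$ of $v_\ell+z_q$, defined on a partition of $\mathbb R$ into short subintervals on which a CFL-type condition holds.'' That is the \emph{later} convex-integration scheme (as in \cite{BDLIS16}, \cite{Ise18}), not the one in \cite{DelSze13}. Flow maps solve $\partial_t\Phi+(v_\ell+z_\ell)\cdot\nabla\Phi=0$, and to use them one needs $\|\nabla\Phi-\mathrm{Id}\|$ small on each subinterval, i.e.\ a \emph{pathwise} bound on $\tau\|\nabla(v_\ell+z_\ell)\|_{C^0}$. Moment bounds alone do not give this (the flow estimates are exponential in $\int\|\nabla(v_\ell+z_\ell)\|$), so your proposal would force you back to the cut-off of $z$ you explicitly say you want to avoid---precisely the situation of \cite{LZ24}. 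The paper instead uses the original \cite{DelSze13} device: explicit ``transport coefficients'' $\phi_k^{(j)}(\tau,y)=\sum_{l\in\mathcal C_j}\alpha_l(\mu y)e^{-i(k\cdot l/\mu)\tau}$ with $\mu=\lambda_q$, composed with $y=(v_\ell+z_\ell)(s,\cdot)$ (not with any flow). These are explicit smooth functions whose derivatives depend only \emph{polynomially} on $|v_\ell+z_\ell|$ (see \eqref{phi1}--\eqref{phi3}), hence are fully compatible with pure moment estimates; the transport cancellation comes from the algebraic identity \eqref{pro phi}, no ODE is solved and no time partition is needed.

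A secondary correction: your Chebyshev step for ``the region where $\mathring R_\ell/\rho$ leaves the good set'' is not needed and would not close as stated. The paper takes $\rho=\sqrt{\ell^2+|\mathring R_\ell|^2}+r_0\zeta_\ell$, so that $\|r_0\mathring R_\ell/\rho\|_{C^0}\le r_0$ \emph{deterministically} and the argument of $\gamma_k^{(j)}$ never leaves $B_{r_0}(\mathrm{Id})$. This is what makes all the bounds in Proposition~\ref{estiak} polynomials in $\|v_q\|$, $\|z_q\|$, $\|\mathring R_q\|$, and it is also why \eqref{vqd} follows simply by setting $n=1$ in the $C^0$ estimates (at the cost of one extra power of $b$ in the parameter conditions), rather than by any separate $L^1$ argument.
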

	
	\subsection{Proof of the main result} 
	Our main result is the following theorem, which implies Theorem~\ref{Theorem 1.1}.
	
	\bt\label{v}
	Assume that $\tr((-\Delta)^{3/2+\varkappa} GG^*)<\infty$ for some $\varkappa>0$ and fix a constant $r\in(1,\frac32)$. Let $e:\R\to(0,\infty)$ be a smooth function  satisfying $\bar e\geq e(t)\geq \underline{e}\geq 4{\cdot}48 {\cdot}(2\pi)^3c_R^{-1}L^2$  with $\|e'\|_{C^{0}}\leq \tilde e$ for some constants $\overline{e},\underline{e},\tilde{e}>0$. There exists an $(\mathcal{F}_t)_{t\in\mR}$-adapted process $u$ which is an analytically weak solution to \eqref{eul1} and belongs to $C(\mR;C^{\vartheta})$ $\mathbf{P}$-a.s.  for some $\vartheta>0$. Moreover, we have
	\begin{align}\label{vtheta:1}\$u\$_{C^{\vartheta},2r}<\infty,\end{align} 
	and for all $t\in\mR$
	
	\begin{align}\label{eq:K1}
		\mathbf{E}\|u(t)\|_{L^2}^2=e(t).
	\end{align}
	\et
	
	\begin{proof}
		We begin the initial iteration from $v_{0}\equiv 0$ on $\mR$ and take
		$\mathring{R}_0=z_{0}\mathring\otimes z_{0}-\mathcal{R}z_{0}$.
		By using \eqref{eR}, \eqref{estimate-zq'}, and $\delta_1=6L^2$, we have 
		\begin{align*}
			\$\mathring{R}_0\$_{C^0,n}\leq \$z_0\$^2_{C^0,2n}+ \$z_0\$_{C^0,n}
			\leq 2(2n-1)L^2 \leq
			\begin{cases}
				\delta_1,&n=1,2,\\
				\lambda_0^{n\beta},&n\geq 3,
			\end{cases}
		\end{align*}
		where we chose $a$ sufficiently large such that $2(2n-1)L^2\leq a^{n\beta}$ in the last inequality. Taking $n=1$ we also have
		\begin{align*}
			\$\mathring{R}_0\$_{L^1,1} \leq (2\pi)^3\$\mathring{R}_0\$_{C^0,1}\leq 2{\cdot}(2\pi)^3L^2\leq \frac{1}{96}r_R\underline{e}=\frac{1}{48}\delta_2r_R\underline{e},
		\end{align*}
		since $\delta_2=\frac12$ and $4{\cdot}48 {\cdot}(2\pi)^3r_R^{-1}L^2\leq \underline{e}$. Hence, \eqref{vqc} and \eqref{vqd} hold at the level  $q=0$. 
		Similarly, by the Sobolev embedding and \eqref{estimate-zq'} we also have
		\begin{align*}
			\E\|z_0(t)\|^2_{L^2} \leq (2\pi)^3\E\|z_0(t)\|^2_{C^0}\leq (2\pi)^3 \E\|z(t)\|^2_{H^{3/2+\varkappa}} \leq(2\pi)^3L^2\leq \frac{1}{2}e(t).
		\end{align*}
		Thus, \eqref{vqe} holds at the level $q=0$. As we will see in \eqref{defzetaq} below, $\zeta_0$ is well-defined and the iteration can  be carried out.
		
		By using Proposition \ref{p:iteration} with the initial iteration $(v_0,\mathring{R}_0)$, we obtain a sequence of solutions $(v_q,\mathring{R}_q)$ satisfying \eqref{vqa}--\eqref{vqd}, \eqref{vqe} and \eqref{vq+1-vq2}. We use \eqref{vqb}, \eqref{vq+1-vq2}, and interpolation to deduce for any $\vartheta \in (0,\frac{\beta}{1+4\beta})$
		\begin{align*}
			\sum_{q\geq0}\$v_{q+1}-v_q\$_{C^{\vartheta},2r}
			&\leq \sum_{q\geq0}\$v_{q+1}-v_q\$^{1-\vartheta}_{C^0,2r} \$v_{q+1}-v_q\$^{\vartheta}_{C^1,2r}
			\\ &\lesssim \sum_{q\geq0} \delta_{q+1}^{\frac{1-\vartheta}{2}} \lambda_{q+1}^{\vartheta+3\vartheta \beta} 
			\leq \sqrt{6}La+\lambda_2^\beta \sum_{q\geq1} \lambda_{q+1}^{4\vartheta \beta +\vartheta-\beta} <\infty.
		\end{align*}
		Hence, the above series is summable, and we may define a limiting function $v=\lim_{q\to \infty}v_q$ which lies
		in $L^{2r}(\Omega;C(\mR;C^{\vartheta}))$. The limit
		$v$ is $(\mathcal{F}_t)_{t\in\mR}$-adapted follows from that $v_q$ is $(\mathcal{F}_t)_{t\in\mR}$-adapted for every $q\in\mathbb{N}_{0}$. For the same $\vartheta$ as above and any $p\geq1$, using \eqref{estimate-zq'}, \eqref{z2} and interpolation again we have 
		\begin{align*}
			\sum_{q\geq0}\$z_{q+1}-z_q\$_{C^{\vartheta},p}
			&\leq \sum_{q\geq0}\$z_{q+1}-z_q\$^{1-\vartheta}_{C^0,p} \$z_{q+1}-z_q\$^{\vartheta}_{C^1,p}
			\\ &\lesssim p\sum_{q\geq0} \lambda_{q+1}^{-\frac{\gamma}{8}\varkappa(1-\vartheta)} \lambda_{q+2}^{\frac{\gamma}{8}\vartheta} 
			= p\sum_{q\geq1} \lambda_{q+1}^{-\frac{\gamma}{8}\varkappa(1-\vartheta)+\frac{\gamma}{8}b\vartheta}<\infty.
		\end{align*}
		The  last inequality is ensured by $\vartheta<\frac{\varkappa}{b+\varkappa}$, thus we obtain $\lim_{q\to \infty}z_q=z$ in $L^p(\Omega;C(\mR;C^\vartheta))$ for any $p\geq1$. 
		Furthermore, by \eqref{vqc} we deduce $\lim_{q\to \infty}\mathring{R}_q=0$ in $L^1(\Omega;C(\mR;C^0))$. Thus, $v$ is an analytically  weak solution to \eqref{nonlinear}, and letting $u=v+z$ we obtain an $(\mathcal{F}_{t})_{t\in\mR}$-adapted analytically weak solution to \eqref{eul1}. Moreover, the estimate \eqref{vtheta:1} for $u$ holds and \eqref{eq:K1} follows from \eqref{vqe}. This concludes the proof of part (a) of Theorem~\ref{Theorem 1.1}. Utilizing the above result and applying the same argument as in \cite[Theorem 6.1]{LZ24}, we can also derive stationary solutions $\bar{u} \in C(\mR;C^{\frac{\vartheta}{2}})$ to \eqref{eul1}. This completes the proof of part (b) of Theorem~\ref{Theorem 1.1}.
	\end{proof}

	In the remainder of this section, we aim to prove Proposition~\ref{p:iteration}
	in several main steps, which are similar to many convex integration schemes. First, we fix the parameters used during the construction and continue with a mollification step in Section~\ref{311}. After that, we introduce the new iteration $v_{q+1}$ in Section~\ref{413}, which is the main part of the construction that differs from the convex integration scheme in \cite{LZ24}. In this scheme, we adopt a similar approach as \cite{DelSze13} to construct the H\"{o}lder continuous Euler flows. We do not incorporate the solutions for the transport equations into the new perturbation $w_{q+1}$. Hence, pathwise estimates are no longer needed and we only use moment bounds in the inductive iteration. Section~\ref{315} mainly contains the inductive moment estimates of $v_{q+1}$. In Section~\ref{316}, we show how to construct the new stress $\mathring{R}_{q+1}$. We establish the inductive moment estimates on $\mathring{R}_{q+1}$ in Section~\ref{31}. Finally, in Section~\ref{s:en}  we show how the energy is controlled.
	
	\subsection{Choice of parameters and Mollification}\label{311}
	In this section, we mainly choose parameters and conduct mollification.
	First, for a fixed integer $b\geq 62$,  we choose $\beta, \gamma \in (0,1)$ to be small parameters such that
	\begin{equation*}
		3b^3\beta +2b\gamma<1, \qquad
		2b^2\beta+\beta<\frac{\gamma\varkappa}{8},
	\end{equation*}
	which can be obtained by choosing $\gamma=\frac{1}{4b}$ and $\beta$ sufficiently small satisfying
	$\beta<\min{\{\frac{\varkappa}{72{\cdot}b^3},\frac{1}{6{\cdot}b^3}\}}$. 
	The last free parameter $a\in 2^{32b\mathbb{N}} $ is chosen large enough such that for any $ n \in \mathbb{N}$
	\begin{align*}
		a^{(b-61)\beta n}>(8n)^3L^6,
	\end{align*}
	where $L$ is the constant in Lemma~\ref{Le1}. The choice of $a$ also ensures $f(q)\in \mathbb{N}$.
	In the following sections, we increase $a$ to absorb implicit constants.

	Next, we proceed with mollification and related estimates.  In order to avoid a loss of derivative, we replace $v_q$ with a mollified velocity field $v_\ell$. To this end, we choose a small parameter $\ell= \lambda_q^{-4}$ and define a mollification of $v_q$, $\mathring{R}_q$ and $z_q$ by
	$$v_\ell=(v_q*_x\phi_\ell)*_t\varphi_\ell,\qquad
	\mathring{R}_\ell=(\mathring{R}_q*_x\phi_\ell)*_t\varphi_\ell,\qquad
	z_\ell=({z_q}*_x\phi_\ell)*_t\varphi_\ell,$$
	where $\phi_\ell=\frac{1}{\ell^3}\phi(\frac{\cdot}{\ell})$ is a space mollifier on $\mathbb{R}^3$ and $\varphi_\ell=\frac{1}{\ell}\varphi(\frac{\cdot}{\ell})$ is a time mollifier with support in $(0,1)$.
	From \eqref{euler}, it follows that $(v_\ell ,\mathring{R}_\ell)$ satisfies
	\begin{equation}\label{mollification}
		\aligned
		\partial_t v_\ell -z_\ell +\div((v_\ell+z_\ell)\otimes (v_\ell+z_\ell))+\nabla p_\ell&=\div (\mathring{R}_\ell+R^\textrm{com}),
		\\\div v_\ell &=0,
		\endaligned
	\end{equation}
	where we have defined
	\begin{equation}\label{Rcom}
		R^{\textrm{com}}=(v_\ell+z_\ell)\mathring{\otimes}(v_\ell+z_\ell)-((v_q+z_q)\mathring{\otimes}(v_q+z_q))*_x\phi_\ell*_t\varphi_\ell,
	\end{equation}
	\begin{equation*}
		p_\ell=(p_q*_x\phi_\ell)*_t\varphi_\ell-\frac{1}{3}\big(|v_\ell+z_\ell|^2-(|v_q+z_q|^2*_x\phi_\ell)*_t\varphi_\ell).
	\end{equation*}
	Using a standard mollification estimate,  we have for any $t\in \R$
	\begin{align}\label{error}
		\|(v_q-v_\ell)(t)\|_{L^\infty}\lesssim \ell \|v_q\|_{C^1_{[t-1,t],x}}.
	\end{align}
	Taking expectation and using inductive assumption \eqref{vqb} and $2r<3$, we obtain
	\begin{align}\label{vq-vl}
		\$v_q-v_\ell\$_{C^0,2r}\lesssim \ell \$v_q\$_{C^1_{t,x},2r}  \leq \ell\$v_q\$_{C^1_{t,x},3} \leq\lambda_{q}^{-3+3\beta} \leq \frac{1}{4}\delta_{q+1}^\frac{1}{2},
	\end{align}
	where we used $b\beta+3\beta < 1$ in the last inequality and we chose $a$ sufficiently large to absorb the implicit constant.
	\subsection{Perturbation of the velocity}\label{413}
	As usual in convex integration schemes, our idea is to define $v_{q+1}$ as a perturbation of $v_\ell$. To this end, we employ the H\"{o}lder continuous Euler flows constructed in \cite[Section 4.1]{DelSze13} and the building blocks of the perturbation are Beltrami waves recalled in Appendix~\ref{Bw2}. We proceed with the construction by defining the amplitude functions $a_k$ and providing some estimates for $a_k$. 
	\subsubsection{Construction of amplitude functions $a_k$}\label{313} 
	We begin with recalling the construction of transport coefficients $\phi_{k}^{(j)}$ from \cite{DelSze13}. We choose two constants $c_1$ and $c_2$ such that $\frac{\sqrt{3}}{2}<c_1<c_2<1$ and fix a function $\varphi \in C^{\infty}_c(B_{c_2}(0))$ which is nonnegative and identically 1 on the ball $B_{c_1}(0)$. We next consider the lattice $\mathbb{Z}^3 $ and its quotient by $(2\mathbb{Z})^3$, and denote by $\mathcal{C}_j$, $j=1,\dots,8$ the eight equivalence classes of $\mathbb{Z}^3/(2\mathbb{Z})^3$. For each $k\in \mathbb{Z}^3$ we denote by $\varphi_k$ the function
	$$\varphi_k(y):=\varphi(y-k).$$
	Observe that, if $k\neq l \in \mathcal{C}_j$, then $|k-l|\geq2>2c_2$, hence $\varphi_k$ and $\varphi_l$ have disjoint supports. On the other hand, the function 
	\begin{align*}
		\psi:=\sum_{k\in \mathbb{Z}^3}\varphi_k^2
	\end{align*}
	is smooth, bounded and bounded away from zero. We then define, for $\tau \in \mR$ and $y\in \mR^3$:
	\begin{align*}
		\alpha_k(y)&:=\frac{\varphi_k(y)}{\sqrt{\psi(y)}}, \qquad k\in \mathbb{Z}^3,
		\\ \phi_k^{(j)}(\tau,y)&:=\sum_{l\in\mathcal{C}_j}\alpha_l(\mu y)e^{-i(k\cdot\frac{l}{\mu})\tau}, \quad k\in \mathbb{Z}^3, \ j=1, \dots ,8.
	\end{align*}
	For any fixed $j$ and $(\tau,y)\in \mR \times \mR^3$, observe that there exists at most one $l\in \mathcal{C}_j$ such that $\alpha_l(\mu y)\neq0$ and this $l$ has the property that $|\mu y-l|<1$. Thus, in a neighborhood of $(\tau,y)$, we have
	\begin{align}\label{pro phi}
		\partial_\tau \phi_k^{(j)}+i(k\cdot y)\phi_k^{(j)}=ik\cdot\left( y-\frac{l}{\mu} \right) \phi_k^{(j)}.
	\end{align}
	Moreover, since $\alpha_l$ and $\alpha_{\tilde{l}}$ have disjoint supports for $l\neq \tilde{l}\in \mathcal{C}_j$, it follows that for all $\tau,y$ as above and $k\in \mathbb{Z}^3$
	\begin{align*}
		\sum_{j=1}^{8}|\phi_k^{(j)}(\tau,y)|^2=\sum_{j=1}^{8}\sum_{l\in\mathcal{C}_j}\alpha_l(\mu y)^2=1,
	\end{align*}
	and 
	\begin{align}\label{phi1}
		\sup_{y,\tau}|D_y^r \phi_k^{(j)}(\tau,y)|\leq C\mu^r, \quad j=1, \dots,8,
	\end{align}
	with the constant $C=C(r,|k|)$ depending only on $r\in\mathbb{N}_0$ and $|k|$. More generally, in \cite[Proposition 4.2]{DLS14} it is proved that for any $r,h\in \N_0$ and $k\in\mathbb{Z}^3$ the derivatives of $\phi_k^{(j)}$ with respect to $\tau$ are controlled on the
	set $|y|\leq Y$ by
	\begin{align}\label{phi2}
		\sup_{|y|\leq Y,\tau}\big|D_y^r \partial_\tau^h \phi_k^{(j)}(\tau,y)\big|\leq \sup_{|y|\leq Y}C (|y|+1)^h\mu^r \leq C (Y+1)^h\mu^r , \quad j=1, \dots ,8,
	\end{align}
	where $C = C(r, h, |k|)$ and $Y> 0$ is any given constant. We also recall the following estimate on the material derivative of $\phi_{k}^{(j)}$ from \cite[(27)]{DelSze13}. For any $r\in \N_0$ and $k\in\mathbb{Z}^3$, there exists a constant $C=C(r,|k|)$ such that for any $j=1,\dots,8$ it holds
	\begin{align}\label{phi3}
		\sup_{y,\tau}\big|D_y^r( \partial_\tau \phi_k^{(j)}+i(k\cdot y)\phi_k^{(j)})\big|\leq C\mu^{r-1}.
	\end{align}
	In the sequel, we take $\mu=\lambda_{q}$ at the $q$ step.
	
	In order to define the amplitude functions, we first apply the Lemma~\ref{Belw2} with $N=8$ to obtain $\lambda_0>1$, $r_0>0$ and pairwise
	disjoint families $\Lambda_j \subset \{k\in \mathbb{Z}^3:|k|=\lambda_0\}$ for $j\in \{
	1,\dots,8\}$ together with corresponding smooth functions 
	$	\gamma_k^{(j)}\in C^\infty(B_{r_0}(\mathrm{Id}))$, $k\in \Lambda_j$. We then define $\rho$ as
	\begin{equation}\label{eq:rho3}
		\rho(s,y):=\sqrt{\ell^2+|\mathring{R}_\ell(s,y)|^2}+r_0\zeta_\ell(s),
	\end{equation}\label{zetaq}
	where $	\zeta_\ell:=\zeta_q*_s\varphi_\ell$ and
	\begin{align}\label{defzetaq}
		\zeta_q(s):=\frac1{3{\cdot} (2\pi)^3} \Big[ e(s)(1-\delta_{q+2})-\E\|v_q(s)+{z_q}(s)\|_{L^2}^2\Big],
	\end{align}
	and we take the constant $r_R$ in \eqref{vqd} satisfying 
	\begin{align}\label{choice rR}
		r_R<r_0.
	\end{align}
	Finally, the amplitude functions $a_k$, $k\in \Lambda:=\cup_{j=1}^8\Lambda_j$, are defined by
	\begin{equation}\label{defak}
		\aligned
		a_k(s,y,\tau)&=\mathbf{1}_{\{k\in \Lambda_j\}}r_0^{-\frac{1}{2}}\rho^\frac12 (s,y)\gamma_k^{(j)}\left(\Id- \frac{r_0\mathring{R}_\ell(s,y)}{\rho(s,y)}\right)
		\phi_k^{(j)}( \tau,(v_\ell+z_\ell)(s,y)) 
		\\&=\mathbf{1}_{\{k\in \Lambda_j\}}h_k(s,y)\phi_k^{(j)}( \tau,(v_\ell+z_\ell)(s,y)) ,
		\endaligned
	\end{equation}
	where $h_k(s,y)=r_0^{-\frac{1}{2}}\rho^\frac12 (s,y)\gamma_k^{(j)}\left(\Id- \frac{r_0\mathring{R}_\ell(s,y)}{\rho(s,y)}\right)$. By the definition of $\rho$, we have
	\begin{align*}
		\left\|\Id-\left(\Id- \frac{r_0\mathring{R}_\ell(s,y)}{\rho(s,y)} \right)\right\|_{C^0} \leq r_0.
	\end{align*}
	As a result, $\mathrm{Id}-r_0\rho^{-1}\mathring{R}_{\ell}$ lies in the domain of the function $\gamma_{k}^{(j)}$
	which ensures the conditions in Lemma~\ref{Belw2} are satisfied. Furthermore, we provide bounds for $C_{s,y}^N$ norms of the amplitude functions $a_k$ and their material derivatives $\partial_\tau a_k+ik\cdot(v_\ell+z_\ell)a_k$. This is done in forthcoming Proposition ~\ref{estiak}, whose proof is given in Appendix~\ref{ap:A}.
	\begin{proposition}\label{estiak}
		Let $a_k$ be given by \eqref{defak}, $k\in \Lambda$. Then we obtain for $N\geq1$
		\begin{align}
			\|a_k\|_{C^N_{s,y}}&\lesssim\ell^{-2N-\frac{1}{2}}(1+\|v_q\|_{C^0_{[s-1,s+1],y}}+\|z_q\|_{C^0_{[s-1,s+1],y}})^N(1+\|\mathring{R}_q\|_{C^0_{[s-1,s+1],y}})^{N+1},\label{a_k1}
			\\	\|\partial_\tau a_k\|_{C^N_{s,y}} 
			&\lesssim\ell^{-2N-\frac{1}{2}}(1+\|v_q\|_{C^0_{[s-1,s+1],y}}+\|z_q\|_{C^0_{[s-1,s+1],y}})^{N+1}(1+\|\mathring{R}_q\|_{C^0_{[s-1,s+1],y}})^{N+1},	\label{a_k2}
		\end{align}
		\begin{equation}\label{derivat a_k}
			\aligned
			\|&\partial_\tau a_k+ik\cdot( v_\ell+z_\ell)a_k\|_{C^N_{s,y}} 
			\\&\lesssim \ell^{-2N-\frac{1}{2}}(1+\|v_q\|_{C^0_{[s-1,s+1],y}}+\|z_q\|_{C^0_{[s-1,s+1],y}})^{N}(1+\|\mathring{R}_q\|_{C^0_{[s-1,s+1],y}})^{N+1}.
			\endaligned
		\end{equation}
		And for $N=0$ we have
		\begin{align}
			\|a_k\|_{C^0_{s,y}}&\lesssim \|\mathring{R}_q\|^{\frac{1}{2}}_{C^0_{[s-1,s+1],y}}+\ell^{\frac{1}{2}}+\delta_{q+1}^\frac12,\label{a_k0}
			\\
			\|\partial_\tau a_k\|_{C^0_{s,y}}&\lesssim (1+\|\mathring{R}_q\|^{\frac{1}{2}}_{C^0_{[s-1,s+1],y}})(1+\|v_q\|_{C^0_{[s-1,s+1],y}}+\|z_q\|_{C^0_{[s-1,s+1],y}}),\label{tauak0}
			\\
			\|\partial_\tau a_k+ik\cdot( v_\ell+z_\ell)a_k\|_{C^0_{s,y}} &\lesssim \lambda_{q}^{-1}(\|\mathring{R}_q\|^{\frac{1}{2}}_{C^0_{[s-1,s+1],y}}+\ell^\frac{1}{2}+\delta_{q+1}^\frac12), 	\label{deriva ak0}
		\end{align}
		where all the implicit constants are independent of $q$.
	\end{proposition}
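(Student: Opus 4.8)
The plan is to unwind the definition \eqref{defak} of $a_k$ and differentiate the product $a_k = h_k(s,y)\,\phi_k^{(j)}(\tau,(v_\ell+z_\ell)(s,y))$ using the Leibniz and Faà di Bruno rules, then bound each factor separately. For the transport-coefficient factor I would invoke the estimates \eqref{phi1}, \eqref{phi2}, \eqref{phi3} on $\phi_k^{(j)}$ and its derivatives (with $\mu=\lambda_q$), after noting that the spatial argument $(v_\ell+z_\ell)$ has $C^0$-norm controlled by $1+\|v_q\|_{C^0}+\|z_q\|_{C^0}$ (so one may take $Y\sim 1+\|v_q\|_{C^0}+\|z_q\|_{C^0}$ in \eqref{phi2}), and that its $C^N_{s,y}$ derivatives for $N\ge 1$ pick up factors of $\ell^{-2N}$ from the mollification $\ell=\lambda_q^{-4}$ together with the crude bounds coming from \eqref{vqb}. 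For the amplitude factor $h_k = r_0^{-1/2}\rho^{1/2}\gamma_k^{(j)}(\mathrm{Id}-r_0\rho^{-1}\mathring R_\ell)$ I would rely on the separate Appendix estimates on $\rho$, $\rho^{-1}$ and $\mathring R_\ell$ in terms of $\mathring R_q$ and $\ell$; here is where the extra power $(1+\|\mathring R_q\|_{C^0})^{N+1}$ comes from — one derivative hits either $\rho^{1/2}$ or the argument of $\gamma_k^{(j)}$, and $\rho^{-1}$ is bounded below by $r_0\zeta_\ell\gtrsim 1$ so its reciprocal does not blow up. Finally I would assemble the pieces, absorbing all $k$-dependent and universal constants (permissible since $|k|=\lambda_0$ is fixed) into the $\lesssim$, to get \eqref{a_k1}; the $\tau$-derivative estimate \eqref{a_k2} is identical except that $\partial_\tau\phi_k^{(j)}$ costs an extra factor $\mu=\lambda_q\lesssim \ell^{-1}$, equivalently an extra $(1+\|v_q\|_{C^0}+\|z_q\|_{C^0})$ after relabeling, which explains the raised exponent on that factor.

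The material-derivative bound \eqref{derivat a_k} is the point where the construction really uses the special structure of $\phi_k^{(j)}$. Here I would compute
\[
\partial_\tau a_k + i k\cdot(v_\ell+z_\ell)\,a_k
= h_k\,\bigl(\partial_\tau\phi_k^{(j)} + i(k\cdot y)\phi_k^{(j)}\bigr)\big|_{y=(v_\ell+z_\ell)(s,y)},
\]
so that the dangerous term $\partial_\tau\phi_k^{(j)}$, which on its own would contribute $\mu=\lambda_q$, is replaced by the material derivative $\partial_\tau\phi_k^{(j)}+i(k\cdot y)\phi_k^{(j)}$, whose derivatives are controlled by $C\mu^{r-1}$ via \eqref{phi3}. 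That is precisely the gain of one power of $\lambda_q^{-1}$ (equivalently, one $C^0$-factor of the velocity) recorded in \eqref{deriva ak0} for $N=0$ and reflected in the reduced exponent $N$ (rather than $N+1$) on the velocity factor in \eqref{derivat a_k} for $N\ge1$. When $N\ge1$ I also need to account for derivatives landing on $h_k$ or on the inner map $(v_\ell+z_\ell)$ — these give the $\ell^{-2N}$ and $(1+\|\mathring R_q\|_{C^0})^{N+1}$ factors exactly as before — and for derivatives of the material-derivative-of-$\phi$ itself, which by \eqref{phi3} (and its higher analogues, or by differentiating \eqref{pro phi}) are no worse than $\mu^{N-1}\lesssim \lambda_q^{N-1}$, harmless against $\ell^{-2N}$.

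The $N=0$ cases \eqref{a_k0}, \eqref{tauak0}, \eqref{deriva ak0} are cheaper and I would treat them first as a warm-up: from $\rho^{1/2}\lesssim \|\mathring R_\ell\|_{C^0}^{1/2}+\ell^{1/2}+\zeta_\ell^{1/2}$, the definition \eqref{defzetaq} of $\zeta_q$ together with the inductive energy bound \eqref{vqe} gives $\zeta_\ell^{1/2}\lesssim \delta_{q+1}^{1/2}$ (this is where the $\delta_{q+1}^{1/2}$ in \eqref{a_k0} enters), and $|\gamma_k^{(j)}|$ is bounded on its domain, while $|\phi_k^{(j)}|\le 1$; for $\partial_\tau a_k$ one more $\partial_\tau\phi_k^{(j)}$ costs $\mu=\lambda_q$ but is then turned into a velocity factor via $\lambda_q\ell = \lambda_q^{-3}\le 1$, and for the material derivative one simply substitutes \eqref{phi3} with $r=0$.

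I expect the main obstacle to be bookkeeping rather than any single hard estimate: one must carefully distribute $N$ derivatives across the three nested structures (the outer product $h_k\cdot\phi_k^{(j)}$, the composition $\gamma_k^{(j)}\circ(\cdots)$, and the composition $\phi_k^{(j)}(\tau,\cdot)\circ(v_\ell+z_\ell)$), keep track of which factor absorbs the mollification loss $\ell^{-2}$ per derivative versus which absorbs the velocity bound, and verify that in no combination does one exceed the stated exponents $N$ (resp. $N+1$) on the velocity (resp. the stress). In particular one has to check that $\rho^{-1}$ and its derivatives never generate extra stress-powers beyond $N+1$ — this hinges on the lower bound $\rho\ge r_0\zeta_\ell\gtrsim 1$, which is exactly why $\zeta_q$ was inserted into \eqref{eq:rho3} in the first place. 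All of this is carried out in Appendix~\ref{ap:A}.
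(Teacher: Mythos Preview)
Your approach is essentially the paper's: Leibniz/Fa\`a di Bruno on $a_k = h_k\cdot\phi_k^{(j)}(\tau,v_\ell+z_\ell)$, the $h_k$ bounds imported from \cite[Appendix~B]{LZ24}, and the material-derivative cancellation via \eqref{pro phi}/\eqref{phi3}. Two small corrections are needed. First, your lower bound on $\rho$ is wrong: $\zeta_\ell\sim\delta_{q+1}\bar e$, not $\gtrsim 1$; the relevant bound is $\rho\ge\sqrt{\ell^2+|\mathring R_\ell|^2}\ge\ell$, so $\rho^{-1}\le\ell^{-1}$, and this (together with mollification losses on $\mathring R_\ell$) is what generates the $\ell^{-2N-1/2}$ in the $h_k$ estimates --- the $\zeta_\ell$ term in \eqref{eq:rho3} is there for the energy iteration \eqref{vqe}, not as a lower bound. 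Second, $\partial_\tau\phi_k^{(j)}$ does not cost $\mu=\lambda_q$: by \eqref{phi2} with $h=1$, $r=0$ it costs $(1+|y|)=(1+\|v_\ell+z_\ell\|_{C^0})$ directly, so no ``conversion via $\lambda_q\ell$'' is needed --- the extra velocity factor in \eqref{a_k2} and \eqref{tauak0} is immediate. For the material derivative at $N\ge 1$ the paper writes out the explicit form $ik\cdot(v_\ell+z_\ell-\bar l/\lambda_q)a_k$ from \eqref{pro phi} and applies the already-proved \eqref{a_k1} rather than going through \eqref{phi3} and Fa\`a di Bruno again; your route via \eqref{phi3} also works but is slightly more laborious.
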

	
	\subsubsection{Construction of $v_{q+1}$}\label{314}
	With these preparations in hand, let us now proceed with the construction of the perturbation $w_{q+1}$, which then defines the next iteration by $v_{q+1}:=v_{\ell}+w_{q+1}$. First, we define the principal part $w_{q+1}^{(p)}$ of the perturbation $w_{q+1}$ as
	\begin{equation}\label{defwp}
		\aligned
		w_{q+1}^{(p)}(t,x)&:=W(t,x,\lambda_{q+1}t,\lambda_{q+1}x),
		\\W(s,y,\tau,\xi)&:=\sum_{k\in\Lambda} a_k(s,y,\tau)B_k e^{ik\cdot \xi},
		\endaligned
	\end{equation}
	where $B_k$ is a suitable vector defined in Appendix~\ref{Bw2}. Since the coefficients $a_k$ are $(\mathcal{F}_t)_{t\in \mR}$-adapted and $B_k e^{i\lambda_{q+1} k\cdot x}$ is a deterministic function we deduce that
	$w_{q+1}^{(p)}$ is also $(\mathcal{F}_t)_{t\in \mR}$-adapted.
	By a direct computation, we deduce that
	\begin{align*}
		w_{q+1}^{(p)}(t,x)=\frac{1}{\lambda_{q+1}}\textrm{curl} \left( \sum_{k\in \Lambda}  ia_k \frac{k\times B_k}{|k|^2}e^{i\lambda_{q+1}k\cdot x}\right) - \frac{1}{\lambda_{q+1}}\sum_{k\in \Lambda} \nabla a_k \times \frac{B_k}{|k|}e^{i\lambda_{q+1}k\cdot x}.
	\end{align*}
	Hence, we also define the incompressibility corrector by
	\begin{align}\label{defwc}
		w_{q+1}^{(c)}(t,x)=\frac{1}{\lambda_{q+1}}\sum_{k\in \Lambda}  \nabla a_k \times \frac{B_k}{|k|}e^{i\lambda_{q+1}k\cdot x}.
	\end{align}
	Similarly, $w_{q+1}^{(c)}$ is also $(\mathcal{F}_t)_{t\in \mR}$-adapted. 
	Then we define the total perturbation as 
	\begin{equation}\label{wq+1}
		w_{q+1}:=w_{q+1}^{(p)}+w_{q+1}^{(c)}=\frac{1}{\lambda_{q+1}}\textrm{curl} \left( \sum_{k\in \Lambda}  ia_k \frac{k\times B_k}{|k|^2}e^{i\lambda_{q+1}k\cdot x}\right),
	\end{equation}
	which is mean zero, divergence-free and $(\mathcal{F}_t)_{t\in\mR}$-adapted. The new velocity $v_{q+1}$ is defined as
	\begin{equation}\label{vq}
		v_{q+1}:=v_\ell+w_{q+1}.
	\end{equation}
	Thus, $v_{q+1}$ is also $(\mathcal{F}_t)_{t\in\mR}$-adapted.
	\subsection{Inductive estimates for $v_{q+1}$}\label{315}
	In this section, we shall verify the inductive estimates \eqref{vqa} and \eqref{vqb} on the level $q+1$. We first estimate $\$w_{q+1}\$_{C^0,n}$, it follows from \eqref{a_k0}, \eqref{defwp} and \eqref{A5} that for any $t\in \mR$
	\begin{align*}
		\|w_{q+1}^{(p)}\|_{C^0_{t,x}}\lesssim M|\Lambda| (r_0^{-\frac12}\ell^{\frac{1}{2}}+r_0^{-\frac12}\|\mathring{R}_q\|^{\frac{1}{2}}_{C^0_{[t-1,t+1],x}}+\bar{e} \delta_{q+1}^{\frac12} ),
	\end{align*}
	where $M$ is a universal constant given in \eqref{A5} and $|\Lambda|$ is the
	cardinality of the set $\Lambda$.
	Taking expectation and using \eqref{vqc} implies for $n\geq1$
	\begin{equation}\label{wp0}
		\begin{aligned}
			\$w_{q+1}^{(p)}\$_{C^0,n}&\lesssim r_0^{-\frac12}|\Lambda|M (\$\mathring{R}_q\$_{C^0,n}^\frac{1}{2}+\ell^\frac{1}{2})+|\Lambda|M\bar{e} \delta_{q+1}^{\frac12}
			\\&  \leq r_0^{-\frac12}|\Lambda|M (\lambda_{q}^{\frac{n}{2}\beta}+\ell^\frac{1}{2})+|\Lambda|M \bar{e} \delta_{q+1}^{\frac12}
			\leq \frac13 \lambda_{q+1}^{\frac{n}{3}\beta},
		\end{aligned}
	\end{equation}
	where we used $b\geq3$ and $a$ sufficiently large to absorb the constant. 
	Similarly, using \eqref{vqc} and $\$\mathring{R}_q\$_{C^0,r}\leq\$\mathring{R}_q\$_{C^0,2}\leq \delta_{q+1}$, we also obtain
	\begin{equation}\label{wpr}
		\aligned
		\$w_{q+1}^{(p)}\$_{C^0,2r}&\lesssim r_0^{-\frac12}|\Lambda|M (\$\mathring{R}_q\$_{C^0,r}^\frac{1}{2}+\ell^\frac{1}{2})+|\Lambda|M\bar{e} \delta_{q+1}^{\frac12}
		\\&\leq r_0^{-\frac12}|\Lambda|M (\delta_{q+1}^{\frac{1}{2}}+\ell^\frac{1}{2})+|\Lambda|M\bar{e} \delta_{q+1}^{\frac12}
		\leq \frac{1}{4}\bar{M}\delta_{q+1}^{\frac{1}{2}},
		\endaligned
	\end{equation}
	where we used $\ell\leq \delta_{q+1}$, and $\bar{M}$ is a universal constant satisfying  
	\begin{align}\label{def M2}
		4|\Lambda|M\bar{e}+ 8r_0^{-\frac12}|\Lambda|M<\bar{M}.
	\end{align}
	We also chose $a$ sufficiently large to absorb the constant. 
	We then use \eqref{a_k1} and \eqref{defwc} to obtain
	\begin{align*}
		\|w_{q+1}^{(c)}\|_{C^0_{t,x}}
		& \lesssim \frac{1}{\lambda_{q+1}}\sum_{k\in \Lambda} \left\|  \nabla a_k \times \frac{B_k}{|k|}e^{i\lambda_{q+1}k\cdot x}\right\| _{C^0_{t,x}}
		\\&\lesssim
		\frac{\ell^{-\frac{5}{2}}}{\lambda_{q+1}} \left( 1+\|v_q\|_{C^0_{[t-1,t+1],x}}+\|z_q\|_{C^0_{[t-1,t+1],x}}\right) \left(1+ \|\mathring{R}_q\|_{C^0_{[t-1,t+1],x}}\right)^2.
	\end{align*}
	Taking expectation and 
	using \eqref{estimate-zq'}, \eqref{vqa} and \eqref{vqc}, we obtain for $n\geq1$
	\begin{equation}\label{wc0}
		\begin{aligned}
			\$w_{q+1}^{(c)}\$_{C^0,n}&\lesssim \frac{\ell^{-\frac{5}{2}}}{\lambda_{q+1}}\left( 1+\$v_q\$_{C^0,2n}+\$z_q\$_{C^0,2n}\right)\left( 1+\$\mathring{R}_q\$^2_{C^0,4n}\right) 
			\\&\lesssim \frac{\lambda_q^{10}}{\lambda_{q+1}}\left( 1+\lambda_q^{n\beta}+\sqrt{2n}L\right)\left( 1+\lambda_q^{8n\beta}\right)
			\lesssim \frac{\lambda_q^{10+9n\beta}}{\lambda_{q+1}}\sqrt{2n}L
			\leq \frac13 \lambda_{q+1}^{\frac{n}{3}\beta},
		\end{aligned}
	\end{equation}
	where we used  $b>27$ and $\sqrt{2n}L<a^{(\frac{b}{3}-9)n\beta}$ to have $\sqrt{2n}L\lambda_{q}^{10+9n\beta}<\lambda_{q}^{b+\frac{n}{3}b\beta}$ and $a$ was chosen sufficiently large to absorb the constant. Taking $n=3$ in the second inequality of \eqref{wc0} and since $2r<3$ we also obtain 
	\begin{align}\label{wcr}
		\$w_{q+1}^{(c)}\$_{C^0,2r}\leq \$w_{q+1}^{(c)}\$_{C^0,3}\lesssim \frac{\lambda_q^{10+27\beta}}{\lambda_{q+1}} \leq \frac{1}{4}\delta_{q+1}^{\frac{1}{2}},
	\end{align}
	which requires $b>27$ and $10+b\beta+27\beta<b$ to have $\lambda_q^{10+27\beta}<\lambda_q^{b-b\beta}$ and $a$ sufficiently large to absorb the constant.
	
	For $C^1_{t,x}$-norm of $w_{q+1}$, we apply chain rule with \eqref{a_k1}, \eqref{a_k0} and \eqref{tauak0} to deduce
	\begin{equation}\label{wp 1}
		\aligned
		\|w_{q+1}^{(p)}\|_{C_{t,x}^1} \lesssim &\sum_{k\in \Lambda}
		\left(\|\nabla a_k\|_{C^0_{t,x}}+\|\partial_s a_k\|_{C^0_{t,x}}+\lambda_{q+1}(\|a_k\|_{C^0_{t,x}}+\|\partial_\tau a_k\|_{C^0_{t,x}}) \right)
		\\  \lesssim &\ell^{-\frac{5}{2}}\left( 1+\|v_q\|_{C^0_{[t-1,t+1],x}}+\|z_q\|_{C^0_{[t-1,t+1],x}} \right) \left(1+ \|\mathring{R}_q\|_{C^0_{[t-1,t+1],x}}\right)^2
		\\ & +\lambda_{q+1}\left( 1+\|v_q\|_{C^0_{[t-1,t+1],x}}+\|z_q\|_{C^0_{[t-1,t+1],x}} \right)\left(1+\|\mathring{R}_q\|_{C^0_{[t-1,t+1],x}}^{\frac{1}{2}}\right).
		\endaligned
	\end{equation}
	Taking expectation and using \eqref{estimate-zq'}, \eqref{vqa} and \eqref{vqc}, we obtain for $n\geq1$
	\begin{equation}\label{wp1}
		\aligned
		\$w_{q+1}^{(p)}\$_{C_{t,x}^1,n}\lesssim &\ell^{-\frac{5}{2}}\left( 1+\$v_q\$_{C^0,2n}+\$z_q\$_{C^0,2n}\right)(1+\$\mathring{R}_q\$^2_{C^0,4n}) \\ &+\lambda_{q+1}\left( 1+\$v_q\$_{C^0,2n}+\$z_q\$_{C^0,2n}\right)\left( 1+\$\mathring{R}_q\$_{C^0,n}^{\frac{1}{2}}\right) 
		\\\lesssim&\lambda_q^{10}\left( 1+\lambda_q^{n\beta}+\sqrt{2n}L\right)\left( 1+\lambda_q^{8n\beta}\right)  +\lambda_{q+1}\lambda_q^{\frac{n}{2}\beta}\left( 1+\lambda_q^{n\beta}+\sqrt{2n}L\right)
		\\\lesssim& \lambda_{q}^{10+9n\beta}\sqrt{2n}L +\lambda_{q+1}\lambda_q^{\frac{3}{2}n\beta}\sqrt{2n}L
		\leq \frac13\lambda_{q+1}^{1+n\beta},
		\endaligned
	\end{equation}
	where we used $b>10$, $a^{(b-9)n\beta}>\sqrt{2n}L$ to have $\sqrt{2n}L\lambda_{q}^{10+9n\beta}<\lambda_{q}^{b+bn\beta}$ and $\sqrt{2n}L\lambda_{q}^{\frac32n\beta}<\lambda_{q+1}^{n\beta}$  in the last inequality. We also chose $a$ sufficiently large to absorb the constant. Applying chain rule with \eqref{a_k1}, \eqref{a_k2} and \eqref{a_k0} again we deduce
	\begin{equation}\label{wc 1}
		\aligned
		\|w_{q+1}^{(c)}\|_{C_{t,x}^1} \lesssim &\frac{1}{\lambda_{q+1}} \sum_{k\in\Lambda} \left(  \|\nabla^2 a_k\|_{C^0_{t,x}}+\|\partial_s \nabla a_k\|_{C^0_{t,x}}
		+ \lambda_{q+1}(\|\partial_\tau \nabla a_k\|_{C^0_{t,x}}+  \| \nabla a_k\|_{C^0_{t,x}})\right) 
		\\  \lesssim &\frac{\ell^{-\frac{9}{2}}}{\lambda_{q+1}} \left( 1+ \|v_q\|_{C^0_{[t-1,t+1],x}}+\|z_q\|_{C^0_{[t-1,t+1],x}} \right)^2\left(1+\|\mathring{R}_q\|_{C^0_{[t-1,t+1],x}}\right)^3
		\\&+\ell^{-\frac{5}{2}}\left( 1+\|v_q\|_{C^0_{[t-1,t+1],x}}+\|z_q\|_{C^0_{[t-1,t+1],x}} \right)^2 \left(1+ \|\mathring{R}_q\|_{C^0_{[t-1,t+1],x}}\right)^2
		\\\lesssim& \ell^{-\frac52}  \left( 1+ \|v_q\|_{C^0_{[t-1,t+1],x}}+\|z_q\|_{C^0_{[t-1,t+1],x}} \right)^2\left(1+\|\mathring{R}_q\|_{C^0_{[t-1,t+1],x}}\right)^3.
		\endaligned
	\end{equation}
	Taking expectation and using \eqref{estimate-zq'}, \eqref{vqa} and \eqref{vqc}, we obtain for $n\geq1$
	\begin{equation}\label{wc1}
		\aligned
		\$w_{q+1}^{(c)}\$_{C_{t,x}^1,n}\lesssim & \lambda_q^{10}\left( 1+\$v_q\$_{C^0,4n}^2+\$z_q\$_{C^0,4n}^2\right)\left(1+\$\mathring{R}_q\$^3_{C^0,6n} \right) 
		\\ \leq& \lambda_q^{10}\left( 1+\lambda_q^{3n\beta}+ 4nL^2\right) \left(1+\lambda_q^{18n\beta} \right) 
		\\\lesssim &\lambda_q^{10+21n\beta}+ \lambda_q^{10+18n\beta}4nL^2
		\leq \frac13 \lambda_{q+1}^{1+n\beta},
		\endaligned
	\end{equation}	
	which requires $b>21$ and $a^{(b-21)n\beta}>4nL^2$ to have $\lambda_{q}^{10+21n\beta}<\lambda_q^{b+bn\beta}$ and $4nL^2\lambda_{q}^{10+18n\beta}<\lambda_q^{b+bn\beta}$. We also chose $a$ sufficiently large to absorb the constant.
	
	With these estimates in hand, we combine \eqref{vq-vl}, \eqref{wpr} with \eqref{wcr}  to deduce 
	\begin{align*}
		\$v_{q+1}-v_q\$_{C^0,2r}\leq \$v_q-v_\ell\$_{C^0,2r}+\$w_{q+1}^{(p)}\$_{C^0,2r}+\$w_{q+1}^{(c)}\$_{C^0,2r} \leq \bar{M}\delta_{q+1}^{\frac{1}{2}},
	\end{align*}
	which implies \eqref{vq+1-vq2} holds.
	Using \eqref{vqa}, \eqref{wp0} and \eqref{wc0}, we obtain
	\begin{align}\label{vq +1}
		\$v_{q+1}\$_{C^0,n} \leq \$v_q\$_{C^0,n}+ \$w_{q+1}^{(p)}\$_{C^0,n}+\$w_{q+1}^{(c)}\$_{C^0,n}\leq\lambda_q^{\frac{n}{3}\beta} + \frac{2}{3} \lambda_{q+1}^{\frac{n}{3}\beta} \leq \lambda_{q+1}^{\frac{n}{3}\beta},
	\end{align}
	which implies \eqref{vqa} holds at the level $q+1$.
	Using \eqref{vqb}, \eqref{wp1} and \eqref{wc1} we obtain
	\begin{align*}
		\$v_{q+1}\$_{C^1_{t,x},n} \leq \$v_q\$_{C^1_{t,x},n}+ \$w_{q+1}^{(p)}\$_{C^1_{t,x},n}+\$w_{q+1}^{(c)}\$_{C^1_{t,x},n}\leq \lambda_q^{1+n\beta}+\frac23 \lambda_{q+1}^{1+n\beta} \leq \lambda_{q+1}^{1+n\beta},
	\end{align*}
	which implies \eqref{vqb} holds at the level $q+1$.

	\subsection{Definition of the Reynolds stress $\mathring{R}_{q+1}$}\label{316}
	By subtracting system \eqref{mollification} from equation \eqref{euler} at level $q+1$, we derive
	\begin{equation}\label{def R q+1}
		\aligned
		& \div \mathring{R}_{q+1}-\nabla p_{q+1}
		\\& =\underbrace{(\partial_t+(v_\ell+z_\ell)\cdot \nabla)w_{q+1}^{(p)}}
		_{\div(R^{\textrm{trans}})}+ \underbrace{ \div (w_{q+1}^{(p)} \otimes w_{q+1}^{(p)}+\mathring{R}_\ell)}
		_{\div(R^{\textrm{osc}})+\nabla p^{\textrm{osc}}}+	\underbrace{\partial_tw_{q+1}^{(c)}}_{\div(R^{\textrm{error}}_1)}+\underbrace{\div (w_{q+1}^{(p)}\otimes(v_\ell+z_\ell))}_{\div (R^{\textrm{error}}_2)}
		\\&
		+\underbrace{\div(v_{q+1}+z_\ell)\otimes w_{q+1}^{(c)}+w_{q+1}^{(c)}\otimes (v_{q+1}+z_\ell)-w_{q+1}^{(c)} \otimes w_{q+1}^{(c)})}_{\div(R^{\textrm{error}}_3)+\nabla p^{\textrm{error}}_3}
		\\&+\underbrace{\div(v_{q+1} \otimes (z_{q+1}- z_{\ell} )+(z_{q+1} -z_{\ell} )\otimes v_{q+1}+z_{q+1}\otimes z_{q+1}-z_{\ell} \otimes z_{\ell})-z_{q+1}+z_{\ell}}_{\div(R^{\textrm{cor}})+\nabla p^{\textrm{cor}}}
		\\ &+\div(R^{\textrm{com}}) -\nabla p_{\ell}.
		\endaligned
	\end{equation}
	Here, $R^{\text{com}}$ is defined as in \eqref{mollification}. By using the inverse divergence operator $\mathcal{R}$ introduced in Section~\ref{s:2.1}, we define 
	\begin{equation*}
		\aligned
		R^{\textrm{trans}}&:=\mathcal{R}\left((\partial_t+(v_\ell+z_\ell)\cdot \nabla)w_{q+1}^{(p)}\right),
		\\ R^{\textrm{osc}}&:=\mathcal{R}\left[ \div\left( w_{q+1}^{(p)}\otimes w_{q+1}^{(p)}+\mathring{R}_\ell-\frac{1}{2}( |w_{q+1}^{(p)}|^2-r_0^{-1}\rho)\Id  \right) \right] ,
		\\ R^{\textrm{error}}_1&:=\mathcal{R}\left( \partial_tw_{q+1}^{(c)} \right),
		\\ R^{\textrm{error}}_2&:= \mathcal{R}\left[ \div (w_{q+1}^{(p)}\otimes(v_\ell+z_\ell))\right],
		\\ R^{\textrm{error}}_3&:=(v_{q+1}+z_\ell)\mathring{\otimes} w_{q+1}^{(c)}+w_{q+1}^{(c)}\mathring{\otimes} (v_{q+1}+z_\ell)-w_{q+1}^{(c)} \mathring{\otimes} w_{q+1}^{(c)},
		\\ R^{\textrm{cor}}&:=v_{q+1}\mathring \otimes (z_{q+1}- z_{\ell}) +(z_{q+1}-z_{\ell})\mathring \otimes v_{q+1}+z_{q+1}\mathring \otimes z_{q+1}-z_{\ell}\mathring \otimes z_{\ell}-\mathcal{R}(z_{q+1}-z_{\ell}),
		\\  p^{\textrm{osc}}&:=\frac{1}{2}(|w_{q+1}^{(p)}|^2-r_0^{-1}\rho),
		\\  p^{\textrm{error}}_3&:=\frac{1}{3}(2(v_{q+1}+z_\ell)\cdot w_{q+1}^{(c)} -|w_{q+1}^{(c)}|^2),
		\\ p^{\textrm{cor}}&:=\frac{1}{3}(v_{q+1} \cdot z_{q+1}-v_{q+1} \cdot z_{\ell} +z_{q+1} \cdot v_{q+1}-z_{\ell} \cdot v_{q+1}+z_{q+1} \cdot z_{q+1}-z_{\ell}\cdot z_{\ell}).
		\endaligned
	\end{equation*}
	Finally, we define the Reynolds stress at the level $q+1$ by
	\begin{equation}\label{Reynold}
		\mathring{R}_{q+1}=R^{\textrm{trans}}+R^{\textrm{osc}}+ R^{\textrm{error}}_1+ R^{\textrm{error}}_2+ R^{\textrm{error}}_3+R^{\textrm{com}}+R^{\textrm{cor}},
	\end{equation}
	and the corresponding pressure is defined by
	\begin{equation*}
		p_{q+1}=p_{\ell}-p^{\textrm{osc}}-p^{\textrm{error}}_3-
		p^{\textrm{cor}}.
	\end{equation*}
	
	Arguing as in \cite[Proposition 6.1]{DelSze13}, we also deduce the following result, which provides bounds of the Fourier coefficients (with respect to $\xi \in \mathbb{T}^3$) of the matrix-valued field $W\otimes W$. The proof is also given in Appendix~\ref{ap:A}.
	\begin{proposition} \label{cor:U}
		Let $  W=W(s,y,\tau,\xi )$ be defined by \eqref{defwp}. Then the matrix-valued field $W \otimes W$ can be written as
		
		\begin{align}\label{WxW1}
			(W\otimes W)(s,y,\tau,\xi)=r_0^{-1}\rho(s,y) \mathrm{Id} -\mathring{R}_\ell(s,y) +\sum_{1\leq|k|\leq 2\lambda_0} U_k(s,y,\tau)e^{ik\cdot \xi},
		\end{align}
		with the coefficients $U_k \in C^\infty( \R \times \mathbb{T}^3 \times\R, \mathcal{S}^{3 \times 3})$, satisfying\footnote{Here, $\mathrm{Tr}: \mathcal{S}^{3 \times 3}\to \R$  denotes the trace of a matrix.}
		\begin{align}\label{eq:proU}
			U_k k = \tfrac{1}{2} \tr(U_k)k.
		\end{align}
		In addition, for $N\geq2$ we have
		\begin{align}\label{esti:UN}
			\| U_k\|_{C^0_sC^N_y}\lesssim \ell^{-2N-1}(1+\|v_q\|_{C^0_{[s-1,s+1],y}}+\|z_q\|_{C^0_{[s-1,s+1],y}})^N(1+\|\mathring{R}_q\|_{C^0_{[s-1,s+1],y}})^{N+2},
		\end{align}
		and 
		\begin{align}\label{esti:U1}
			\|U_k\|_{C^0_sC^1_y}\lesssim \ell^{-\frac52}(1+\|v_q\|_{C^0_{[s-1,s+1],y}}+\|z_q\|_{C^0_{[s-1,s+1],y}})(1+\|\mathring{R}_q\|_{C^0_{[s-1,s+1],y}}^{\frac12})^5,
		\end{align}
		where the implicit constants are independent of $q$.
	\end{proposition}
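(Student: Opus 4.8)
The plan is to follow the computation in \cite[Proposition 6.1]{DelSze13} and adapt it to the stochastic setting, keeping track of the dependence of constants on $v_q$, $z_q$ and $\mathring R_q$ rather than on deterministic bounds. Recall that $W(s,y,\tau,\xi)=\sum_{k\in\Lambda}a_k(s,y,\tau)B_ke^{ik\cdot\xi}$, so that
\[
(W\otimes W)(s,y,\tau,\xi)=\sum_{k,k'\in\Lambda}a_k a_{k'}(B_k\otimes B_{k'})e^{i(k+k')\cdot\xi}.
\]
First I would isolate the diagonal-in-phase contributions: the terms with $k'=-k$ (recall $-\Lambda=\Lambda$ and $B_{-k}=\overline{B_k}$ by the Beltrami construction in Appendix~\ref{Bw2}) produce a $\xi$-independent matrix, while the terms with $k+k'\neq0$ have $1\le|k+k'|\le2\lambda_0$ and constitute the oscillatory remainder $\sum_{1\le|k|\le2\lambda_0}U_k e^{ik\cdot\xi}$, where $U_k(s,y,\tau)=\sum_{j+j'=k}a_ja_{j'}B_j\otimes B_{j'}$. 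The key algebraic input is the geometric identity from Lemma~\ref{Belw2}: since the $\phi_k^{(j)}$ satisfy $\sum_{j=1}^8|\phi_k^{(j)}|^2=1$ pointwise (on the relevant set in $(\tau,y)$) and the $\gamma_k^{(j)}$ were chosen precisely so that $\sum_{k\in\Lambda}(h_k)^2(B_k\otimes B_{-k})$ reconstructs the prescribed symmetric matrix, the $\xi$-independent part equals $r_0^{-1}\rho(s,y)\,\mathrm{Id}-\mathring R_\ell(s,y)$; this is exactly the point of the amplitude definition \eqref{defak} and the choice \eqref{eq:rho3} of $\rho$. The property \eqref{eq:proU} is inherited from the corresponding property of Beltrami pairs, namely $(B_j\otimes B_{j'}+B_{j'}\otimes B_j)(j+j')=((B_j\cdot B_{j'})\,\mathrm{Id}\,+\textrm{antisymmetric})(j+j')$ together with $\tfrac12\tr(U_k)=\sum_{j+j'=k}a_ja_{j'}(B_j\cdot B_{j'})$; I would simply quote the computation from \cite[Proposition 6.1]{DelSze13} verbatim since the vectors $B_k$ are unchanged.

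Next I would establish the derivative bounds \eqref{esti:UN} and \eqref{esti:U1}. Since $U_k$ is a finite sum (over the fixed finite set $\Lambda$, whose cardinality depends only on the dimension) of products $a_ja_{j'}B_j\otimes B_{j'}$ with $|B_j|$ bounded by a universal constant, it suffices to bound $\|a_j a_{j'}\|_{C^0_sC^N_y}$. Using the Leibniz rule and the interpolation/Moser-type inequality $\|fg\|_{C^N}\lesssim\|f\|_{C^N}\|g\|_{C^0}+\|f\|_{C^0}\|g\|_{C^N}$, this reduces to the estimates for $\|a_k\|_{C^N_{s,y}}$ and $\|a_k\|_{C^0_{s,y}}$ already proved in Proposition~\ref{estiak}. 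For $N\ge2$, combining \eqref{a_k1} for one factor with \eqref{a_k0} (or rather the cruder bound $\|a_k\|_{C^0_{s,y}}\lesssim\ell^{-1/2}(1+\|\mathring R_q\|_{C^0})^{1/2}$, absorbed into $(1+\|\mathring R_q\|_{C^0})$) for the other factor yields a power count of $\ell^{-2N-1}$ and the stated powers $N$ and $N+2$ of the velocity and stress norms; here one uses that $\|a_k\|_{C^N_{s,y}}$ for $N\le 1$ is dominated by the $N=$ largest term so no worse than claimed. For $N=1$ one instead pairs the $C^1$ bound \eqref{a_k1} with $N=1$ against the $C^0$ bound \eqref{a_k0}: $\|a_ja_{j'}\|_{C^1}\lesssim\|a_j\|_{C^1}\|a_{j'}\|_{C^0}\lesssim \ell^{-5/2}(1+\|v_q\|_{C^0}+\|z_q\|_{C^0})(1+\|\mathring R_q\|_{C^0})^2\cdot(\|\mathring R_q\|_{C^0}^{1/2}+\ell^{1/2}+\delta_{q+1}^{1/2})$, and since $\ell^{1/2}+\delta_{q+1}^{1/2}\lesssim 1$ and $(1+\|\mathring R_q\|_{C^0})^2\|\mathring R_q\|_{C^0}^{1/2}\lesssim(1+\|\mathring R_q\|_{C^0}^{1/2})^5$ this gives \eqref{esti:U1}. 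The fact that $\rho$ itself (through $\sqrt{\ell^2+|\mathring R_\ell|^2}+r_0\zeta_\ell$) enters only the $\xi$-independent part means it plays no role in the $U_k$ estimates.

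The main obstacle I anticipate is not the algebra—which is a direct transcription of \cite{DelSze13}—but rather the precise bookkeeping of the powers of $\ell^{-1}$, $(1+\|v_q\|_{C^0}+\|z_q\|_{C^0})$ and $(1+\|\mathring R_q\|_{C^0})$ so that the exponents match exactly what is consumed later in Section~\ref{31} when estimating $R^{\textrm{osc}}$; a lossy bound here (e.g. an extra factor of $\|v_q\|_{C^0}$ or a worse power of $\ell^{-1}$) could prevent the iteration from closing given the tight constraints $b\geq62$ and the smallness of $\beta,\gamma$. In particular I would be careful that the $N=1$ case genuinely gives only $\ell^{-5/2}$ and not $\ell^{-9/2}$: this is why one must use \eqref{a_k1} with $N=1$ (which costs $\ell^{-5/2}$) rather than differentiating a cruder bound, and pair it with the \emph{improved} $C^0$ estimate \eqref{a_k0} rather than \eqref{a_k1} with $N=0$. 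The rest is routine.
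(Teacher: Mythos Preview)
Your proposal is correct and follows essentially the same route as the paper: the algebraic identities \eqref{WxW1} and \eqref{eq:proU} are quoted directly from \cite[Proposition 6.1]{DelSze13}, and the derivative bounds \eqref{esti:UN}, \eqref{esti:U1} are obtained by writing $U_k$ as a finite sum of products $a_{k'}a_{k''}$ and combining the Leibniz rule with the $a_k$ estimates of Proposition~\ref{estiak}. The only cosmetic difference is that the paper expands the full Leibniz sum $\sum_{j=0}^{N}\|a_{k'}\|_{C^j}\|a_{k''}\|_{C^{N-j}}$ (picking up the $\ell^{-2N-1}$ from the intermediate terms $1\le j\le N-1$), whereas you invoke the interpolation inequality $\|fg\|_{C^N}\lesssim\|f\|_{C^N}\|g\|_{C^0}+\|f\|_{C^0}\|g\|_{C^N}$; both arrive at the stated bounds, and your bookkeeping of the exponents is accurate.
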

	
	\subsection{Inductive estimates for $\mathring{R}_{q+1}$}\label{31}
	In this section, we shall verify the  estimates \eqref{vqc} and \eqref{vqd} at the $q+1$ level. To this end, we first recall the following stationary phase lemma (see for example \cite[Proposition 5.2]{DelSze13}) adapted to our setting. 
	\begin{lemma}\label{lemma1}
		Let $ k \in \mathbb{Z}^3 / \{0\}$ be fixed. Assume $a\in C^{\infty}(\mathbb{T}^3;\mathbb{R}^3)$, let $F(x):=a(x)e^{i\lambda k\cdot x}$. 
		Then, with the inverse divergence operator $\mathcal{R}$ defined in Section~\ref{s:2.1}, for any $m\in \mathbb{N}$, we have 
		\begin{align*}
			\left| \int_{\mathbb{T}^3}a(x)e^{i\lambda k\cdot x} \dif x\right|  \lesssim
			\frac{\|a\|_{C^m}}{\lambda^m},
		\end{align*}
		and
		\begin{align*}
			\left\|\mathcal{R}(F)\right\|_{C^\alpha}=\left\|\mathcal{R}\left(a(x)e^{i\lambda k\cdot x}\right)\right\|_{C_x^\alpha}\lesssim \frac{\|a\|_{C_x^0}}{\lambda^{1-\alpha}}+\frac{\|a\|_{C_x^m}}{\lambda^{m-\alpha}}+\frac{\|a\|_{C_x^{m,\alpha}} }{\lambda^m},
		\end{align*}
		where the implicit constant depends on $\alpha$ and m (in particular, not on the frequency $\lambda$).
	\end{lemma}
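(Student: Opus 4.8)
The plan is to prove Lemma~\ref{lemma1} by reducing everything to the known boundedness of the operators $\Delta^{-1}$, $\partial_i\Delta^{-1}$ and $\partial_i\partial_j\Delta^{-1}$ on H\"older spaces, after stripping off the rapidly oscillating exponential. First I would record the trivial bound: for $F(x)=a(x)e^{i\lambda k\cdot x}$ with $k\neq 0$, repeated integration by parts gives $\int_{\mathbb{T}^3}a(x)e^{i\lambda k\cdot x}\,\dif x = (i\lambda)^{-m}|k|^{-2m}\int_{\mathbb{T}^3}(k\cdot\nabla)^m a(x)\,e^{i\lambda k\cdot x}\,\dif x$ for every $m\in\mathbb{N}$, whence $|\int a e^{i\lambda k\cdot x}|\lesssim \lambda^{-m}\|a\|_{C^m}$ (the $|k|$-dependence is harmless since $1\le|k|\le 2\lambda_0$ is bounded in all our applications, so it can be absorbed into the implicit constant). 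This is the first displayed inequality.

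For the second, the key algebraic identity is that $\mathcal{R}$ applied to an oscillatory function can be computed by moving each $\partial\Delta^{-1}$-type operator past the exponential. Concretely, for any Fourier multiplier $T$ with symbol $m(\xi)$ one has $T(a e^{i\lambda k\cdot\,\cdot})(x)=e^{i\lambda k\cdot x}\,T_{\lambda k}a(x)$, where $T_{\lambda k}$ is the multiplier with shifted symbol $m(\xi+\lambda k)$. Applying this to the three building-block multipliers of $\mathcal{R}$ (namely $\partial_j\Delta^{-1}$, $\delta_{kl}$, and $\partial_k\partial_l\Delta^{-1}\div\Delta^{-1}$ as written in the definition of $\mathcal{R}$ in Section~\ref{s:2.1}), I would expand $\mathcal{R}(ae^{i\lambda k\cdot x})=e^{i\lambda k\cdot x}\,\mathcal{R}_{\lambda k}a$ where $\mathcal{R}_{\lambda k}$ is a pseudodifferential operator of order $-1$ in the regime $|\xi|\gtrsim\lambda$; more precisely its symbol decomposes, via a Taylor/Neumann expansion in $\xi/(\lambda k)$, into a leading term of size $\lambda^{-1}$, correction terms, and an $m$-th order remainder. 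Then I would take the $C^\alpha$ norm of $e^{i\lambda k\cdot x}\mathcal{R}_{\lambda k}a$: each derivative hitting the exponential costs a factor $\lambda$, and the Leibniz rule distributing $m$ derivatives (plus the H\"older seminorm) across the product produces exactly the three terms $\lambda^{-(1-\alpha)}\|a\|_{C^0}$, $\lambda^{-(m-\alpha)}\|a\|_{C^m}$, $\lambda^{-m}\|a\|_{C^{m,\alpha}}$, using at each stage the boundedness \eqref{eR}-type estimates for $\mathcal{R}$ and its derivatives on $C^\alpha(\mathbb{T}^3)$.

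The main obstacle I anticipate is bookkeeping the symbol expansion of $\mathcal{R}_{\lambda k}$ cleanly enough that the three-term structure emerges with the stated powers of $\lambda$, rather than a messier sum — in particular handling the inverse Laplacian $\Delta^{-1}$, whose symbol $|\xi|^{-2}$ has a singularity at $\xi=0$ that must be dealt with by restricting to the high-frequency sector (the function $ae^{i\lambda k\cdot\,\cdot}$ has spectrum concentrated near $\lambda k$, so on its support $|\xi+\lambda k|\gtrsim\lambda$ once $\lambda$ is large, which is our regime) and absorbing the finitely many low modes into the $C^0$ term. Since this is a standard stationary-phase / commutator estimate and is already available in the literature as \cite[Proposition 5.2]{DelSze13}, I would either cite it directly with the minor adaptation that we need the $C^\alpha$ (rather than $C^0$) bound on $\mathcal{R}(F)$ — which is exactly the form stated there — or give the two-line integration-by-parts argument for the first inequality and defer the second to the reference, noting that our $\mathcal{R}$ coincides with theirs up to the harmless trace-free projection.
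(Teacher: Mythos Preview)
Your proposal is correct and aligns with the paper's treatment: the paper does not prove this lemma at all but simply recalls it as a known result, citing \cite[Proposition~5.2]{DelSze13}. Your option of giving the two-line integration-by-parts argument for the first inequality and deferring the $C^\alpha$ bound on $\mathcal{R}(F)$ to that reference is exactly what the paper does (indeed, somewhat more, since you also sketch the symbol-shifting mechanism behind the cited estimate).
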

	
	\subsubsection{Estimates on the transport error} Inspecting the definition of $w_{q+1}^{(p)}$, we have
	\begin{equation}\label{Rtra}
		\aligned
		\mathcal{R}\left( (\partial_t+(v_\ell+z_\ell)\cdot \nabla)w_{q+1}^{(p)}\right) =&\lambda_{q+1} \mathcal{R}\left( \sum_{k\in \Lambda} (\partial_\tau a_k+ik\cdot(v_\ell+z_\ell)a_k)B_ke^{i\lambda_{q+1}k\cdot x}\right) 
		\\&\quad +\mathcal{R}\left( \sum_{k\in \Lambda} (\partial_sa_k+(v_\ell+z_\ell)\cdot\nabla a_k)B_ke^{i\lambda_{q+1}k\cdot x}\right) 
		\\=&: R^{\textrm{trans}}_1+R^{\textrm{trans}}_2 .
		\endaligned
	\end{equation}
	We estimate the above two terms separately. For the first term on the right-hand side, we use \eqref{derivat a_k}, \eqref{deriva ak0} and apply Lemma~\ref{lemma1} with $m=2$ to have
	\begin{equation}\label{Rtran 1}
		\aligned
		\|\lambda_{q+1}& \mathcal{R}((\partial_\tau a_k+ik\cdot(v_\ell+z_\ell)a_k)B_ke^{i\lambda_{q+1}k\cdot x}) \|_{C^0_{t,x}}
		\\& \lesssim
		\frac{\lambda_{q+1}\|\partial_\tau a_k+ik\cdot(v_\ell+z_\ell)a_k\|_{C^0_{t,x}}}{\lambda_{q+1}^{1-\alpha}}+\frac{\lambda_{q+1}\|\partial_\tau a_k+ik\cdot(v_\ell+z_\ell)a_k\|_{C^0_tC_x^3}}{\lambda_{q+1}^{2-\alpha}}
		\\& \lesssim \frac{1}{\lambda_{q}\lambda_{q+1}^{-\alpha}}\big(\|\mathring{R}_q\|^{\frac{1}{2}}_{C^0_{[t-1,t+1],x}}+\ell^{\frac{1}{2}}+\delta_{q+1}^{\frac12} \big) 
		\\&\quad +\frac{\ell^{-\frac{13}{2}}}
		{\lambda_{q+1}^{1-\alpha}}(1+\|v_q\|_{C_{[t-1,t+1],x}^0}+\|z_q\|_{C_{[t-1,t+1],x}^0})^3(1+\|\mathring{R}_q\|_{C^0_{[t-1,t+1],x}})^4.
		\endaligned
	\end{equation}
	Then taking expectation and using \eqref{estimate-zq'}, \eqref{vqa} and \eqref{vqc}, we obtain
	\begin{equation}\label{tr1}
		\aligned
		\$R^{\textrm{trans}}_1\$_{C^0,n}
		&\lesssim \frac{\$\mathring{R}_q\$^{\frac{1}{2}}_{C^0,n}+\delta_{q+1}^{\frac12}}{\lambda_q\lambda_{q+1}^{-\alpha}}+\frac{\ell^{-\frac{13}{2}}}{\lambda_{q+1}^{1-\alpha}}\left(1+\$v_q\$_{C^0,6n}^3+\$z_q\$_{C^0,6n}^3\right)\left(1+\$\mathring{R}_q\$^4_{C^0,8n} \right) 
		\\&	\lesssim  \frac{ \lambda_q^{\frac{n}{2}\beta}+\delta_{q+1}^{\frac12} }{\lambda_q \lambda_{q+1}^{-\alpha}}+ \frac{\lambda_q^{26}}{\lambda_{q+1}^{1-\alpha}}\left( 1+\lambda_q^{6n\beta}+(6n)^2L^3\right)\left( 1+\lambda_q^{32n\beta}\right)  
		\\&\lesssim  \frac{ \lambda_q^{\frac{n}{2}\beta}+\delta_{q+1}^{\frac12} }{\lambda_q \lambda_{q+1}^{-\alpha}} + \frac{\lambda_q^{26+38n\beta}}{\lambda_{q+1}^{1-\alpha}} + \frac{\lambda_q^{26+32n\beta}}{\lambda_{q+1}^{1-\alpha}}(6n)^2L^3
		\leq\begin{cases}
			\frac{1}{10}\delta_{q+2},&n=1,2,\\
			\frac{1}{10} \lambda_{q+1}^{n\beta},&n\geq3,
		\end{cases}
		\endaligned
	\end{equation}
	where we used $b>38$, $3b^2\beta+b\alpha<1$ to have $\lambda_q^{\beta-1+b\alpha}<\lambda_{q}^{-2b^2\beta}$, $\lambda_q^{26+76\beta+b\alpha}<\lambda_{q}^{b-2b^2\beta}$ and used  $a^{(b-38)\beta n}>(6n)^2L^3$ to have $(6n)^2L^3\lambda_{q}^{26+38n\beta+b\alpha}<\lambda_{q}^{b+bn\beta}$ and we chose $a$ sufficiently large to absorb the constant.
	
	Similarly, for the second term on the right-hand side of \eqref{Rtra}, using \eqref{a_k1}, \eqref{a_k0} and applying Lemma~\ref{lemma1} with $m=1$, we have
	\begin{equation}\label{Rtran 2}
		\aligned
		\|\mathcal{R}( (\partial_s&a_k+(v_\ell+z_\ell)\cdot\nabla a_k)B_ke^{i\lambda_{q+1}k\cdot x}) \|_{C^0_{t,x}}
		\\  & \lesssim \frac{\|( \partial_sa_k+(v_\ell+z_\ell)\cdot \nabla a_k)\| _{C^0_{t,x}}}{\lambda_{q+1}^{1-\alpha}}+\frac{\|( \partial_sa_k+(v_\ell+z_\ell)\cdot\nabla a_k)\| _{C^0_tC^2_x}}{\lambda_{q+1}^{1-\alpha}}
		\\ & \lesssim
		\frac{\ell^{-\frac{5}{2}}}{\lambda_{q+1}^{1-\alpha}}(1+\|v_q\|_{C^0_{[t-1,t+1],x}}+\|z_q\|_{C^0_{[t-1,t+1],x}})^2(1+\|\mathring{R}_q\|_{C^0_{[t-1,t+1],x}})^2
		\\  & \quad + \frac{\ell^{-\frac{13}{2}}}{\lambda_{q+1}^{1-\alpha}}(1+\|v_q\|_{C^0_{[t-1,t+1],x}}+\|z_q\|_{C^0_{[t-1,t+1],x}})^4(1+\|\mathring{R}_q\|_{C^0_{[t-1,t+1],x}})^4.
		\endaligned
	\end{equation}
	Taking expectation and using \eqref{estimate-zq'}, \eqref{vqa} and \eqref{vqc}, we obtain
	\begin{equation}\label{tran2}
		\aligned
		\$R^{\textrm{trans}}_2 \$_{C^0,n}
		& \lesssim\frac{\ell^{-\frac{13}{2}}}{\lambda_{q+1}^{1-\alpha}}\left( 1+\$v_q\$^4_{C^0,8n}+\$z_q\$^4_{C^0,8n}\right) \left( 1+\$\mathring{R}_q\$^4_{C^0,8n}\right) 
		\\&\leq \frac{\lambda_q^{26}}{\lambda_{q+1}^{1-\alpha}}\left( 1+\lambda_q^{11n\beta} +L^4(8n)^2\right) \left( 1+\lambda_q^{32n\beta}\right) 
		\\&\lesssim \frac{\lambda_q^{26+43n\beta}}{\lambda_{q+1}^{1-\alpha}}+\frac{\lambda_q^{26+32n\beta}}{\lambda_{q+1}^{1-\alpha}}(8n)^2L^4
		\leq\begin{cases}
			\frac{1}{10}\delta_{q+2},&n=1,2,\\
			\frac{1}{10}\lambda_{q+1}^{n\beta},&n\geq3,
		\end{cases}
		\endaligned
	\end{equation}	
	where we used $b>43$, $3b^2\beta+b\alpha<1$ to have  $\lambda_q^{26+86\beta+b\alpha}<\lambda_{q}^{b-2b^2\beta}$ and used  $a^{(b-43)\beta n}>(8n)^2L^4$ to have $(8n)^2L^4\lambda_{q}^{26+43n\beta+b\alpha}<\lambda_{q}^{b+bn\beta}$ and we chose $a$ sufficiently large to absorb the constant. Taking $n=1$ in the second inequality of \eqref{tr1} and \eqref{tran2}, we obtain
	\begin{align}\label{RtraL}
		\$R^{\textrm{trans}}\$_{L^1,1}\lesssim \frac{ \lambda_q^{\beta}} {\lambda_q \lambda_{q+1}^{-\alpha}} +  \frac{\lambda_q^{26+43\beta}}{\lambda_{q+1}^{1-\alpha}} + \frac{\lambda_q^{26+32\beta}}{\lambda_{q+1}^{1-\alpha}}64L^4\leq \frac{1}{7 {\cdot }48} r_R\delta_{q+3}\underline{e},
	\end{align} 
	which requires $a$ sufficiently large to absorb the constant and $b>2$, $3b^3\beta+b\alpha<1$  to have $\lambda_{q}^{\beta-1+b\alpha}<\lambda_{q}^{-2b^3\beta}$ and $\lambda_{q}^{26+43\beta+b\alpha}<\lambda_{q}^{b-2b^3\beta}$ in the last inequality.
	
	\subsubsection{Estimates on the oscillation error}
	Recalling the definition of $w_{q+1}^{(p)}$ in \eqref{defwp}, we write
	\begin{align*}
		w_{q+1}^{(p)}(t,x)=W(t,x,\lambda_{q+1}t,\lambda_{q+1}x).
	\end{align*} 
	By \eqref{WxW1} we have
	\begin{align}\label{wpxwp}
		w_{q+1}^{(p)}\otimes w_{q+1}^{(p)}(t,x)=r_0^{-1}\rho(t,x) \mathrm{Id} -\mathring{R}_\ell (t,x) +\sum_{1\leq|k|\leq 2\lambda_0} U_k(t,x,\lambda_{q+1}t)e^{i\lambda_{q+1}k\cdot x}.
	\end{align}
	We then take trace on the both sides of \eqref{wpxwp} and use \eqref{eq:proU} to rewrite $\div(R^{\textrm{osc}})$ as
	\begin{align*}
		&\div \left( w_{q+1}^{(p)}\otimes w_{q+1}^{(p)}+\mathring{R}_\ell-\frac{1}{2}( |w_{q+1}^{(p)}|^2-r_0^{-1}\rho)\Id \right) 
		\\&=\div\left( w_{q+1}^{(p)}\otimes w_{q+1}^{(p)}-(r_0^{-1} \rho\Id-\mathring{R}_\ell)-\frac{1}{2}( |w_{q+1}^{(p)}|^2-3r_0^{-1}\rho)\Id  \right) 
		\\&=\div \left[ 
		\sum_{1\leq|k|\leq 2\lambda_0}   \left( U_k-\frac{1}{2}(\tr U_k)\Id\right)e^{i\lambda_{q+1}k\cdot x}\right] 
		\stackrel{\eqref{eq:proU}}{=}\sum_{1\leq|k|\leq 2\lambda_0}  \div\left[U_k-\frac{1}{2}(\tr U_k)\Id \right] e^{i\lambda_{q+1}k\cdot x}.
	\end{align*}
	Using \eqref{esti:UN} and \eqref{esti:U1} we have
	\begin{align*}\label{Uk13}
		\begin{aligned}
			\left\|\div\left( U_k-\frac{1}{2}(\tr U_k)\Id \right)\right\|_{C^0_{t,x}}\lesssim
			\ell^{-\frac{5}{2}}(1+\|v_q\|_{C_{[t-1,t+1],x}^0}+\|z_q\|_{C_{[t-1,t+1],x}^0}) (1+\|\mathring{R}_q\|^\frac{1}{2}_{C^0_{[t-1,t+1],x}})^5,
			\\ \left\|\div\left( U_k-\frac{1}{2}(\tr U_k)\Id \right)\right\|_{C^0_tC^2_x}\lesssim \ell^{-7}(1+\|v_q\|_{C_{[t-1,t+1],x}^0}+\|z_q\|_{C_{[t-1,t+1],x}^0})^3 (1+\|\mathring{R}_q\|_{C^0_{[t-1,t+1],x}})^5.
		\end{aligned}
	\end{align*}
	Applying Lemma~\ref{lemma1} with $m=1$ implies
	\begin{equation}\label{Rosc}
		\aligned
		\|R^{\textrm{osc}}\|_{C^0_{t,x}}&\lesssim \frac{1}{\lambda_{q+1}^{1-\alpha}}\left( \left\|\div\left( U_k-\frac{1}{2}(\tr U_k)\Id \right)\right\|_{C^0_{t,x}}+\left\|\div\left( U_k-\frac{1}{2}(\tr U_k)\Id \right)\right\|_{C^0_tC^2_x}\right) 
		\\&\lesssim \frac{	\ell^{-\frac{5}{2}}}{\lambda_{q+1}^{1-\alpha}}(1+\|v_q\|_{C_{[t-1,t+1],x}^0}+\|z_q\|_{C_{[t-1,t+1],x}^0}) (1+\|\mathring{R}_q\|^\frac{1}{2}_{C^0_{[t-1,t+1],x}})^5
		\\&\quad +\frac{\ell^{-7}}{\lambda_{q+1}^{1-\alpha}}(1+\|v_q\|_{C_{[t-1,t+1],x}^0}+\|z_q\|_{C_{[t-1,t+1],x}^0})^3 (1+\|\mathring{R}_q\|_{C^0_{[t-1,t+1],x}})^5.
		\endaligned
	\end{equation}
	Taking expectation and using \eqref{estimate-zq'}, \eqref{vqa} and \eqref{vqc} we obtain
	\begin{equation}\label{osc}
		\aligned
		\$R^{\textrm{osc}}\$_{C^0,n}   &  \lesssim \frac{ \ell^{-\frac{5}{2}}}{\lambda_{q+1}^{1-\alpha}} \left(1+\$v_q\$_{C^0,2n}+\$z_q\$_{C^0,2n}\right) \left(1+\$ \mathring{R}_q\$^\frac{5}{2}_{C^0,5n} \right) 
		\\   &  +    \frac{\ell^{-7}}{\lambda_{q+1}^{1-\alpha}}\left(1+ \$v_q\$^3_{C^0,6n}+\$z_q\$^3_{C^0,6n}\right)\left( 
		1+\$ \mathring{R}_q\$^5_{C^0,10n} \right) 
		\\   &  \leq  \frac{\lambda_q^{28}}{\lambda_{q+1}^{1-\alpha}}\left( 1+\lambda_q^{6n\beta}+L^3(6n)^2\right) \left(1+\lambda_q^{50n\beta} \right) 
		\\  &  \lesssim \frac{\lambda_q^{28+56n\beta}}{\lambda_{q+1}^{1-\alpha}}+\frac{\lambda_q^{28+50n\beta}}{\lambda_{q+1}^{1-\alpha}}L^3(6n)^2
		\leq  \begin{cases}
			\frac{1}{10}\delta_{q+2},&n=1,2,\\
			\frac{1}{10}\lambda_{q+1}^{n\beta},&n\geq3,
		\end{cases}
		\endaligned
	\end{equation}
	where we have used $b>56$, $3b^2\beta +b\alpha<1$ to have $\lambda_{q}^{28+112\beta+b\alpha}<\lambda_{q}^{b-2b^2\beta}$ and used  $a^{(b-56)\beta n}>(6n)^2L^3$ to have $L^3(6n)^{\frac32}\lambda_{q}^{28+56n\beta+b\alpha}<\lambda_{q}^{b+bn\beta}$. Then taking $n=1$ in the second inequality of \eqref{osc}, we can also obtain 
	\begin{align}\label{RoscL}
		\$R^{\textrm{osc}}\$_{L^1,1}\lesssim \frac{\lambda_q^{28+56\beta}}{\lambda_{q+1}^{1-\alpha}}+\frac{\lambda_q^{28+50\beta}}{\lambda_{q+1}^{1-\alpha}}36L^3\leq \frac{1}{7{\cdot}48}r_R\delta_{q+3}\underline{e},
	\end{align}
	where we used $b>29$, $3b^3\beta+b\alpha<1$ to have $\lambda_{q}^{28+56\beta+b\alpha}<\lambda_{q}^{b-2b^3\beta}$ and we chose $a$ sufficiently large to absorb the constant.
	
	\subsubsection{Estimates on $R^{\textrm{error}}_1$}
	We start with the observation that 
	\begin{align*}
		\partial_t w_{q+1}^{(c)}
		= \sum_{k\in \Lambda} \left( \frac{\partial_s\nabla a_k}{\lambda_{q+1}}+\partial_\tau \nabla a_k\right) \times \frac{B_k}{|k|}e^{i\lambda_{q+1}k\cdot x}.
	\end{align*}
	Applying Lemma~\ref{lemma1} with $m=1$ and using \eqref{a_k1}, \eqref{a_k2} and \eqref{a_k0} we have 
	\begin{equation}\label{error 1}
		\aligned
		\|\mathcal{R}(\partial_tw_{q+1}^{(c)})\|_{C^0_{t,x}}&\lesssim \frac{1}{\lambda_{q+1}^{1-\alpha}}\left( \left\|\frac{\partial_s\nabla a_k}{\lambda_{q+1}}+\partial_\tau \nabla a_k\right\|_{C^0_{t,x}}+ \left\|\frac{\partial_s\nabla a_k}{\lambda_{q+1}}+\partial_\tau \nabla a_k\right\|_{C^0_tC^2_x}\right) 
		\\ &\lesssim \frac{\ell^{-\frac{17}{2}}}{\lambda_{q+1}^{2-\alpha}}(1+\|v_q\|_{C_{[t-1,t+1],x}^0}+\|z_q\|_{C_{[t-1,t+1],x}^0})^4(1+\|\mathring{R}_q\|_{C^0_{[t-1,t+1],x}})^5
		\\&+\frac{\ell^{-\frac{13}{2}}}{\lambda_{q+1}^{1-\alpha}}(1+\|v_q\|_{C_{[t-1,t+1],x}^0}+\|z_q\|_{C_{[t-1,t+1],x}^0})^4(1+\|\mathring{R}_q\|_{C^0_{[t-1,t+1],x}})^4.
		\endaligned
	\end{equation}
	Taking expectation and using \eqref{estimate-zq'}, \eqref{vqa} and \eqref{vqc} we obtain
	\begin{equation}\label{error1}
		\aligned
		\$\mathcal{R}(\partial_t w_{q+1}^{(c)})\$_{C^0,n}  & \lesssim \frac{ \ell^{-\frac{17}{2}}}{\lambda_{q+1}^{2-\alpha}}\left( 1+\$v_q\$^4_{C^0,8n}+\$z_q\$^4_{C^0,8n} \right) \left( 1+\$\mathring{R}_q\$^5_{C^0,10n}\right) 
		\\ &\quad +\frac{\ell^{-\frac{13}{2}}}{\lambda_{q+1}^{1-\alpha}}(1+\$v_q\$^4_{C^0,8n}+\$z_q\$^4_{C^0,8n})\left(1+\$\mathring{R}_q\$^4_{C^0,8n}\right)
		\\  & \leq\frac{\lambda_q^{34}}{\lambda_{q+1}^{1-\alpha}}\left( 1+\lambda_q^{11n
			\beta}+L^4(8n)^2\right) \left( 1+\lambda_q^{50n\beta} \right) 
		\\ & \lesssim \frac{\lambda_q^{34+61n\beta}}{\lambda_{q+1}^{1-\alpha}}+\frac{\lambda_q^{34+50n\beta}}{\lambda_{q+1}^{1-\alpha}}L^4(8n)^2
		\leq  \begin{cases}
			\frac{1}{10}\delta_{q+2},&n=1,2,\\
			\frac{1}{10}\lambda_{q+1}^{n\beta},&n\geq3,
		\end{cases}
		\endaligned
	\end{equation}
	where we used $b>61$, $3b^2\beta+b\alpha <1$ to have $\lambda_{q}^{34+122\beta+b\alpha}<\lambda_{q}^{b-2b^2\beta}$, and used  $a^{(b-61)\beta n}>(8n)^2L^4$ to have $(8n)^2L^4\lambda_{q}^{34+61n\beta+b\alpha}<\lambda_{q}^{b+bn\beta}$, and $a$ was chosen sufficiently large to absorb the constant. Then taking $n=1$ in the third inequality of \eqref{error1} we have
	\begin{align}\label{error1L}
		\$R^{\textrm{error}}_1\$_{L^1,1}\lesssim
		\frac{\lambda_q^{34+61\beta}}{\lambda_{q+1}^{1-\alpha}}+\frac{\lambda_q^{34+50\beta}}{\lambda_{q+1}^{1-\alpha}}64L^4 \leq  \frac{1}{7{\cdot}48}r_R\delta_{q+3}\underline{e},
	\end{align}
	where we used $b>35$, $3b^3\beta+b\alpha<1$ to have $\lambda_{q}^{34+61\beta+b\alpha}<\lambda_{q}^{b-2b^3\beta}$, and we chose $a$ sufficiently large to absorb the constant.
	
	\subsubsection{Estimates on $R^{\textrm{error}}_2$}
	By Lemma~\ref{Belw11}, we have $B_k\cdot k=0$. Hence, we can write
	\begin{align*}
		\div (w_{q+1}^{(p)}\otimes(v_\ell+z_\ell))&=w_{q+1}^{(p)}\cdot \nabla (v_\ell+z_\ell)+ \div (w_{q+1}^{(p)} )(v_\ell+z_\ell)
		\\&=\sum_{k\in \Lambda}  \left[ a_k(B_k\cdot \nabla)(v_\ell+z_\ell) + (v_\ell+z_\ell)(B_k\cdot \nabla a_k) \right] e^{i\lambda_{q+1}k\cdot x}.		 
	\end{align*}
	We  apply Lemma~\ref{lemma1} with $m=1$ and use \eqref{a_k1}, \eqref{a_k0} to have
	\begin{equation}\label{error 2}
		\aligned
		&\|\mathcal{R}(\div (w_{q+1}^{(p)}\otimes(v_\ell+z_\ell)))\|_{C^0_{t,x}} 
		\\&\lesssim  \frac{1}{\lambda_{q+1}^{1-\alpha}}\left(\|a_k\cdot \nabla(v_\ell+z_\ell) + (v_\ell+z_\ell)\cdot \nabla a_k\|_{C^0_{t,x}} + \|a_k\cdot \nabla(v_\ell+z_\ell) + (v_\ell+z_\ell)\cdot \nabla a_k\|_{C^0_tC^2_x}\right) 
		\\&\lesssim \frac{\ell^{-\frac{11}{2}}}{\lambda_{q+1}^{1-\alpha}} (1+\|v_q\|_{C_{[t-1,t+1],x}^0}+\|z_q\|_{C_{[t-1,t+1],x}^0})^3(1+\|\mathring{R}_q\|_{C^0_{[t-1,t+1],x}})^3
		\\&+ \frac{\ell^{-\frac{13}{2}}}{\lambda_{q+1}^{1-\alpha}}(1+\|v_q\|_{C^0_{[t-1,t+1],x}}+\|z_q\|_{C^0_{[t-1,t+1],x}})^4(1+\|\mathring{R}_q\|_{C^0_{[t-1,t+1],x}})^4.
		\endaligned
	\end{equation}
	Taking expectation and using \eqref{estimate-zq'}, \eqref{vqa} and \eqref{vqc}, we obtain
	\begin{equation}\label{error2}
		\aligned
		\$\mathcal{R}(\div (w_{q+1}^{(p)}\otimes(v_\ell+z_\ell)))\$_{C^0,n} &\lesssim \frac{ \ell^{-\frac{11}{2}}}{\lambda_{q+1}^{1-\alpha}}\left( 1+ \$v_q\$^3_{C^0,6n}+\$z_q\$^3_{C^0,6n}\right)\left( 
		1+\$ \mathring{R}_q\$^3_{C^0,6n}\right) 
		\\&\quad + \frac{\ell^{-\frac{13}{2}}}{\lambda_{q+1}^{1-\alpha}}\left( 1+\$v_q\$^4_{C^0,8n}+\$z_q\$^4_{C^0,8n}\right) \left( 1+\$\mathring{R}_q\$^4_{C^0,8n}\right) 
		\\&\lesssim \frac{\lambda_q^{26}}{\lambda_{q+1}^{1-\alpha}}\left( 1+\lambda_q^{11n\beta}+L^4(8n)^2\right)\left( 1+\lambda_q^{32n\beta} \right)  
		\\&\lesssim \frac{\lambda_q^{26+43n\beta}}{\lambda_{q+1}^{1-\alpha}}+\frac{\lambda_q^{26+32n\beta}}{\lambda_{q+1}^{1-\alpha}}L^4(8n)^2
		\leq \begin{cases}
			\frac{1}{10}\delta_{q+2},&n=1,2,\\
			\frac{1}{10}\lambda_{q+1}^{n\beta},&n\geq3,
		\end{cases}
		\endaligned
	\end{equation}
	where we used $b>43$, $3b^2\beta+b\alpha <1$ to have $\lambda_{q}^{26+86\beta+b\alpha}<\lambda_{q}^{b-2b^2\beta}$ and used $a^{(b-43)\beta n}>(8n)^2L^4$ to have
	$L^4(8n)^2\lambda_{q}^{26+43n\beta+b\alpha}<\lambda_{q}^{b+bn\beta}$ and we chose $a$ sufficiently large to absorb the constant. Then taking $n=1$ in the third inequality of \eqref{error2} implies 
	\begin{align}\label{error2L}
		\$R^{\textrm{error}}_2\$_{L^1,1}\lesssim \frac{\lambda_q^{26+43\beta}}{\lambda_{q+1}^{1-\alpha}} \leq  \frac{1}{7{\cdot}48}r_R\delta_{q+3}\underline{e},
	\end{align}
	which requires $b>43$, $3b^3\beta+b\alpha<1$ to have $\lambda_{q}^{26+43\beta+b\alpha}<\lambda_{q}^{b-2b^3\beta}$ and
	$a$ large enough to absorb the constant.
	
	\subsubsection{Estimates on $R^{\textrm{error}}_3$} Using \eqref{estimate-zq'}, \eqref{wc0}, \eqref{vq +1} and H\"{o}lder's inequality, we obtain
	\begin{equation}\label{error3}
		\aligned
		&\$w_{q+1}^{(c)}\mathring{\otimes}(v_{q+1}+z_\ell)+(v_{q+1}+z_\ell)\mathring{\otimes} w_{q+1}^{(c)}- w_{q+1}^{(c)}\mathring{\otimes} w_{q+1}^{(c)}\$_{C^0,n}
		\\ &\lesssim \$w_{q+1}^{(c)}\$_{C^0,2n}\left( \$v_{q+1}\$_{C^0,2n}+ \$z_q\$_{C^0,2n}+\$w_{q+1}^{(c)}\$_{C^0,2n}\right) 
		\\&\lesssim \left(  \frac{\lambda_q^{10+18n\beta}}{\lambda_{q+1}} + \frac{\lambda_q^{10+16n\beta}}{\lambda_{q+1}}\sqrt{4n}L\right) \left( \ \frac{\lambda_q^{10+18n\beta}}{\lambda_{q+1}} + \frac{\lambda_q^{10+16n\beta}}{\lambda_{q+1}}\sqrt{4n}L+ \lambda_{q+1}^{\frac{2}{3}n\beta}+\sqrt{2n}L \right) 
		\\&\lesssim \frac{\lambda_q^{20+36n\beta}}{\lambda_{q+1}^2}4nL^2+ \frac{\lambda_q^{10+18n\beta}}{\lambda_{q+1}}\lambda_{q+1}^{\frac{2}{3}n\beta}8nL^2
		\leq 
		\begin{cases}
			\frac{1}{10}\delta_{q+2},& n=1,2,\\
			\frac{1}{10}\lambda_{q+1}^{n\beta},&n\geq 3,
		\end{cases}
		\endaligned
	\end{equation}
	where we used $b>54$, $2b^2\beta+2b\beta<1$ to have $\lambda_{q}^{10+36\beta+\frac43b\beta}<\lambda_q^{b-2b^2\beta}$ and used $a^{(\frac{b}{3}-18)\beta n}>8nL^2$ to have $8nL^2\lambda_{q}^{10+18n\beta}<\lambda_{q}^{b+\frac13bn\beta}$ and chose $a$ sufficiently large to absorb the constant. Then taking $n=1$ in the third inequality of \eqref{error3} we obtain
	\begin{align}\label{error3L}
		\$R^{\textrm{error}}_3\$_{L^1,1}\lesssim 
		\frac{\lambda_q^{20+36\beta}}{\lambda_{q+1}^2}+\frac{\lambda_q^{10+18\beta}}{\lambda_{q+1}}\lambda_{q+1}^{\frac{2}{3}\beta}
		\leq  \frac{1}{7{\cdot}48}r_R\delta_{q+3}\underline{e},
	\end{align}
	where we used $b>54$, $2b^3\beta+2b\beta<1$ to have $\lambda_{q}^{10+18\beta+2b^3\beta+b\beta}<\lambda_{q}^{b/3}$ and chose $a$ sufficiently large to absorb the constant.
	
	\subsubsection{Estimates on the commutator error}Finally, it remains to estimate $R^{\textrm{com}}$ and $R^{\textrm{cor}}$. We begin with $R^{\textrm{cor}}$. By applying a standard mollification estimate, for any $t\in \mR$ and $\delta \in (0,\frac{1}{12})$, we have 
	\begin{align*}
		\|z_\ell(t)-z_q(t)\|_{L^\infty}\lesssim \ell\|z_q\|_{C^0_{[t-1,t]}C^1_x}+\ell^{1/2-\delta}\|z_q\|_{C^{1/2-\delta}_{[t-1,t]}C^0_x}.
	\end{align*} 
	Taking expectation and using \eqref{estimate-zq'}, we obtain for $n\geq1$
	\begin{align}\label{z1}
		\$z_\ell -z_q\$_{C^0,n}\lesssim \ell \$z_q\$_{C^1,n}+ \ell^{1/2-\delta}	\$z_q\$_{C_t^{1/2-\delta}C_x^0,n}\lesssim \lambda_q^{-1}\lambda_{q+1}^{\frac{\gamma}{8}}\sqrt{2n}L.
	\end{align}
	By the definition of $z_q$, we have 
	\begin{align*}
		z_{q+1}(t,x)-z_q(t,x)=\sum_{\lambda_{q+1}^{\frac{\gamma}{8}}< |k|\leq\lambda_{q+2}^{\frac{\gamma}{8}}} e^{ik\cdot x}\hat{z}(t,k),
	\end{align*}
	where $\hat{z}$ is the Fourier transform of $z$, and $k\in \mathbb{Z}^3$. By a direct calculation with H\"{o}lder's inequality, it follows that for any $t\in \mR$, 
	\begin{equation}\label{zq+1 - zq}
		\aligned
		\|z_{q+1}(t)-z_q(t)\|_{L^\infty} 
		\lesssim\lambda_{q+1}^{-\frac{\gamma\varkappa}{8}} \|z(t)\|_{H^{\frac{3}{2}+\varkappa}}.
		\endaligned
	\end{equation}
	Then taking expectation and using \eqref{estimate-zq'} we obtain for $n\geq1$
	\begin{align}\label{z2}
		\$z_{q+1}-z_q\$_{C^0,n}\lesssim \lambda_{q+1}^{-\frac{\gamma\varkappa}{8}}	\$z\$_{H^{\frac32+\varkappa},n}\lesssim \lambda_{q+1}^{-\frac{\gamma\varkappa}{8}}\sqrt{2n}L.
	\end{align}
	Combining \eqref{vqa}, \eqref{z1} with \eqref{z2}, we obtain
	\begin{equation}\label{Rcom11}
		\aligned
		\$v_{q+1}\otimes(z_{q+1}-z_\ell)\$_{C^0,n}&\leq  \$v_{q+1}\$_{C^0,2n}\left( \$z_{q+1}-z_q\$_{C^0,2n}+\$z_\ell -z_q\$_{C^0,2n}\right) 
		\\ &\lesssim \sqrt{2n}L \lambda_{q+1}^{\frac{2}{3}n\beta}(\lambda_{q+1}^{-\frac{\gamma\varkappa}{8}}+\lambda_q^{-1}\lambda_{q+1}^{\frac{\gamma}{8}}).
		\endaligned
	\end{equation}
	Similarly, using \eqref{estimate-zq'}, \eqref{z1} and \eqref{z2}, we also have 
	\begin{align}\label{Rcom12}
		\$ z_{q+1}\otimes z_{q+1}-z_\ell \otimes z_\ell\$_{C^0,n} 
		&\leq \$ z_{q+1}\otimes (z_{q+1}-z_\ell )\$_{C^0,n} +   \$  (z_{q+1}-z_\ell )\otimes z_\ell\$_{C^0,n} \notag
		\\ &\leq (\$z_{q+1}\$_{C^0,2n}+\$z_{\ell}\$_{C^0,2n}) \left( \$z_{q+1}-z_q\$_{C^0,2n}+\$z_\ell -z_q\$_{C^0,2n}\right) \notag
		\\ &\leq \$z\$_{H^{\frac32+\varkappa},2n} \left( \$z_{q+1}-z_q\$_{C^0,2n}+\$z_\ell -z_q\$_{C^0,2n}\right) 
		\\& \lesssim 2nL^2(\lambda_{q+1}^{-\frac{\gamma\varkappa}{8}}+\lambda_q^{-1}\lambda_{q+1}^{\frac{\gamma}{8}}).\notag
	\end{align}  
	Hence, it follows from \eqref{Rcom11} and \eqref{Rcom12} that
	\begin{align}\label{Rcom1}
		\$ R^{\textrm{cor}}\$_{C^0,n} &\lesssim \$v_{q+1}\otimes(z_{q+1}-z_\ell)\$_{C^0,n} + \$ z_{q+1}\otimes z_{q+1}-z_\ell \otimes z_\ell\$_{C^0,n} +	\$\mathcal{R}(z_{q+1}-z_{\ell})\$_{C^0,n}\notag
		\\ &\lesssim \sqrt{2n}L \lambda_{q+1}^{\frac{2}{3}n\beta}(\lambda_{q+1}^{-\frac{\gamma\varkappa}{8}}+\lambda_q^{-1}\lambda_{q+1}^{\frac{\gamma}{8}})+2nL^2(\lambda_{q+1}^{-\frac{\gamma\varkappa}{8}}+\lambda_q^{-1}\lambda_{q+1}^{\frac{\gamma}{8}})
		\\&\lesssim2nL^2\lambda_{q+1}^{\frac{2}{3}n\beta}(\lambda_{q+1}^{-\frac{\gamma\varkappa}{8}}+\lambda_q^{-1}\lambda_{q+1}^{\frac{\gamma}{8}})
		\leq \begin{cases}
			\frac{1}{10}\delta_{q+2} ,& n=1,2,\\
			\frac{1}{10}\lambda_{q+1}^{n\beta},&n\geq3,
		\end{cases}\notag
	\end{align}
	where we used $2b\beta+2\beta<\frac{\gamma\varkappa}{8}$,
	$2b^2\beta+2b\beta+b\gamma<1$ to have $\lambda_{q}^{\frac43b\beta-\frac{\gamma\varkappa}{8}b}<\lambda_{q}^{-2b^2\beta}$ and $\lambda_{q}^{\frac43b\beta+\frac{\gamma}{8}b}<\lambda_{q}^{1-2b^2\beta}$. We also used $2nL^2<a^{\frac{b}{3}n\beta}$ to have $2nL^2\lambda_{q}^{\frac23nb\beta}<\lambda_{q}^{nb\beta}$ and $a$ was chosen sufficiently large to absorb the constant.
	Taking $n=1$ in the third inequality of \eqref{Rcom1}, we obtain
	\begin{align}\label{Rcom1L1}
		\$ R^{\textrm{cor}}\$_{L^1,1}\lesssim 2L^2\lambda_{q+1}^{\frac{2}{3}\beta}(\lambda_{q+1}^{-\frac{\gamma\varkappa}{8}}+\lambda_q^{-1}\lambda_{q+1}^{\frac{\gamma}{8}}) \leq \frac{1}{7{\cdot}48}r_R\delta_{q+3}\underline{e},
	\end{align}
	where we used $2b^2\beta+\beta<\frac{\gamma\varkappa}{8}$, $2b^3\beta+b\beta+b\gamma<1$ to have $\lambda_{q}^{\frac23b\beta-\frac{\gamma\varkappa}{8}b}<\lambda_{q}^{-2b^3\beta}$ and $\lambda_{q}^{\frac23b\beta+\frac{\gamma}{8}b}<\lambda_{q}^{1-2b^3\beta}$ and we chose $a$ sufficiently large to absorb the constant.
	
	We next estimate $R^{\textrm{com}}$, using standard mollification estimates we obtain for any $t\in \mR$ and $\delta\in(0,\frac{1}{12})$
	\begin{equation}\label{moll} 
		\aligned
		\|&R^\textrm{com}(t)\|_{L^\infty}
		\\ &\lesssim   
		\left(\ell \|v_q\|_{C_{[t-1,t],x}^1}+\ell\|z_q\|_{C^0_{[t-1,t]}C_x^1}
		+\ell^{\frac{1}{2}-\delta}\|z_q\|_{C^{\frac{1}{2}-\delta}_{[t-1,t]}C_x^0}\right)\left(\|z_q\|_{C^0_{[t-1,t],,x}}+\|v_q\|_{C^0_{[t-1,t],x}}\right).
		\endaligned
	\end{equation}
	Taking expectation and using  \eqref{estimate-zq'}, \eqref{vqa} and \eqref{vqb}, we obtain
	\begin{equation}\label{Rcomn}
		\aligned
		\$R^\textrm{com}\$_{C^0,n}
		&\lesssim 
		\left(\ell \$v_q\$_{C_{t,x}^1,2n} + \ell\$z_q\$_{C^1,2n}+\ell^{\frac{1}{2}-\delta}\$z_q\$_{C_t^{\frac12-\delta}C_x^0,2n}\right)\left(  \$v_q\$_{C^0,2n} +\$z_q\$_{C^0,2n}\right)   
		\\&\lesssim ( \lambda_q^{-3+2n\beta}+\lambda_q^{-1}\lambda_{q+1}^{\frac{\gamma}{8}}\sqrt{2n}L) ( \lambda_{q}^{\frac{2}{3}n\beta}+\sqrt{2n}L) 
		\\&\lesssim 2nL^2 (\lambda_q^{-3+3n\beta}+\lambda_q^{-1+\frac{2}{3}n\beta}\lambda_{q+1}^{\frac{\gamma}{8}})
		\leq 
		\begin{cases}
			\frac{1}{10}\delta_{q+2},&n=1,2,\\
			\frac{1}{10}\lambda_{q+1}^{n\beta},&n\geq3,
		\end{cases}
		\endaligned
	\end{equation}
	where we used $2b^2\beta+6\beta+b\gamma<1$ to have $\lambda_{q}^{-3+6\beta}<\lambda_{q}^{-2b^2\beta}$ and $\lambda_{q}^{-1+\frac43\beta+\frac{\gamma}{8}b}<\lambda_{q}^{-2b^2\beta}$. We also used $2nL^2<a^{(b-3)n\beta}$ to have $2nL^2\lambda_{q}^{3n\beta}<\lambda_{q}^{bn\beta}$ and chose $a$ sufficiently large to absorb the constant. Taking $n=1$ in the third inequality of \eqref{Rcomn} we obtain
	\begin{align}\label{RcomL1}
		\$R^\textrm{com}\$_{L^1,1}\lesssim 2L^2 ( \lambda_q^{-3+3\beta}+\lambda_q^{-1+\beta}\lambda_{q+1}^{\frac{\gamma}{8}}) \leq \frac{1}{7{\cdot}48}r_R\delta_{q+3}\underline{e},
	\end{align}
	which requires $2b^3\beta+3\beta+b\gamma<1$ to have $2L^2\lambda_{q}^{\beta+\frac{\gamma}{8}b-1}<\lambda_{q}^{-2b^3\beta}$ and $a$ large enough to absorb the constant in the last inequality. 
	
	At last, combining \eqref{tran2}, \eqref{osc}, \eqref{error1}, \eqref{error2}, \eqref{error3}, \eqref{Rcom1} and \eqref{Rcomn} we obtain 
	\begin{align*}
		\$\mathring{R}_{q+1}\$_{C^0,n}\leq
		\begin{cases}
			\delta_{q+2},&n=1,2,\\
			\lambda_{q+1}^{n\beta},&n\geq 3.
		\end{cases}
	\end{align*}
	Combining \eqref{RtraL}, \eqref{RoscL}, \eqref{error1L}, \eqref{error2L}, \eqref{error3L}, \eqref{Rcom1L1} and \eqref{RcomL1}, we obtain
	\begin{align*}
		\$\mathring{R}_{q+1}\$_{L^1,1}\leq \frac{1}{48}r_R\delta_{q+3}\underline{e}.
	\end{align*}
	Hence, we have verified \eqref{vqc} and \eqref{vqd} hold at the level $q+1$.
	
	\subsection{Inductive estimates for the energy}\label{s:en}
	To conclude the proof of Proposition~\ref{p:iteration}, we shall verify \eqref{vqe} holds at the level $q+1$. 
	\bp It holds for $t\in\mR$
	\begin{align}\label{1/4}
		\Big| e(t)(1-\delta_{q+2})-\E\|v_{q+1}(t)+{z_{q+1}}(t)\|_{L^2}^2\Big|\leq& \frac14\delta_{q+2}e(t).
	\end{align}
	
	\ep
	\begin{proof}
		By the definition of $\zeta_q$ in \eqref{defzetaq}, we find
		\begin{align}\label{Eq+1}
			\begin{aligned}
				&\left| e(t)(1-\delta_{q+2})- \E\|v_{q+1}(t)+{z_{q+1}}(t)\|_{L^2}^2 \right| \\&\leq \E \left| \int_{\mathbb{T}^3} |w_{q+1}^{(p)}|^2-3\zeta_q\dif x \right| +\E \left|  \int_{\mathbb{T}^3}|w_{q+1}^{(c)}|^2+2w_{q+1}^{(p)}\cdot w_{q+1}^{(c)} \dif x \right| 
				\\&\qquad +\E \left| \int_{\mathbb{T}^3}|v_\ell-v_q+z_{q+1}-z_q|^2 \dif x  \right| + 2\E \left|  \int_{\mathbb{T}^3}(v_\ell-v_q+z_{q+1}-z_q)(v_q+z_q)\dif x  \right| 
				\\&\qquad + 2\E \left| \int_{\mathbb{T}^3}(v_\ell+z_{q+1})\cdot w_{q+1} \dif x \right| .
			\end{aligned}
		\end{align}
		Let us begin with the bound of the first term on the right-hand side of \eqref{Eq+1}. Taking trace on the both sides of \eqref{wpxwp} and using the fact that $\mathring{R}_\ell$ is traceless, we obtain for any $t\in\mR$
		\begin{align*}
			|w_{q+1}^{(p)}|^2-3\zeta_q=3r_0^{-1}\sqrt{\ell^2+|\mathring{R}_\ell|^2}+3(\zeta_\ell-\zeta_q)+\sum_{k\in \Lambda}\tr(U_k) e^{i\lambda_{q+1}k\cdot x}.
		\end{align*}
		Hence, we have
		\begin{align}\label{Eq1}
			\begin{aligned}
				&\left| \E\int_{\mathbb{T}^3} |w_{q+1}^{(p)}|^2-3\zeta_q\dif x \right| 
				\\&\leq 3 (2\pi)^3r_0^{-1}\ell+3r_0^{-1}\mathbf{E}\|\mathring{R}_\ell\|_{L^1}+3(2\pi)^3|\zeta_\ell-\zeta_q|+\E\sum_{k\in \Lambda}\left| \int_{\mathbb{T}^3}\tr(U_k) e^{i\lambda_{q+1}k\cdot x}\dif x\right|.
			\end{aligned}
		\end{align}
		We next estimate each term separately.
		From the choice of parameters, it follows that
		\begin{align*}
			3 r_0^{-1}(2\pi)^3\ell=3 r_0^{-1}(2\pi)^3\lambda_{q}^{-4}\leq\frac{1}{48}\delta_{q+2}e(t),
		\end{align*}
		which requires $b^2\beta<1$ and $a$ large enough to absorb the constant. Using \eqref{vqd} on $\mathring{R}_q$, $\supp\varphi_\ell\subset [0,\ell]$ and $r_R<r_0$, we deduce for $t\in\mR$
		\begin{align*}
			3r_0^{-1}\mathbf{E}\|\mathring{R}_\ell(t)\|_{L^1}\leq  \frac{1}{16}\delta_{q+2}e(t).
		\end{align*}
		For the third term in \eqref{Eq1}, we use \eqref{estimate-zq'}, \eqref{vqa} and \eqref{vqb} to have 
		\begin{align*}
			3(2\pi)^3|\zeta_\ell-\zeta_q|&\lesssim \ell \|e'\|_{C^{0}_{[t-1,t]}}+\ell  \E\|v_q\|_{C^1_{[t-1,t],x}}(\|v_q\|_{C_{[t-1,t],x}^0}+{\|z_q\|_{C_{[t-1,t],x}^0}})
			\\&\qquad+\ell^{1/2-\delta}\E\|{z_q}\|_{C_{[t-1,t]}^{1/2-\delta}C_x^0}(\|v_q\|_{C_{[t-1,t],x}^0}+\|{z_q}\|_{C_{[t-1,t],x}^0})
			\\&\lesssim \ell \tilde e+(\ell \$v_q\$_{C_{t,x}^1,2}+\ell^{1/2-\delta}\$z_q\$_{C_t^{1/2-\delta}C_x^0,2})(\$v_q\$_{C^0,2}+\$z_q\$_{C^0,2})
			\\&\lesssim \ell \tilde e+(\ell \lambda_q^{1+2\beta}+\ell^{1/2-\delta}\sqrt{2}L)(\lambda_{q}^\beta+\sqrt{2}L)\\
			&\lesssim \lambda_q^{-4}\tilde{e}+\lambda_{q}^{-1+3\beta}L^2
			\leq \frac{1}{ 48}\delta_{q+2}e(t),
		\end{align*}
		which requires $3b^2\beta<1$ to have $\lambda_q^{-1+3\beta}<\lambda_{q}^{-2b^2\beta}$ and $a$  large enough to absorb the constant.
		
		For the last term in \eqref{Eq1}, we apply the first part of Lemma~\ref{lemma1} with $m=1$ and use \eqref{estimate-zq'}, \eqref{vqa} and \eqref{vqc} to obtain 
		\begin{equation}\label{eq:Uk}
			\aligned
			\sum_{k\in \Lambda} \E&\left| \int_{\mathbb{T}^3}\tr(U_k) e^{i\lambda_{q+1}k\cdot x}\dif x\right| \lesssim \frac{1}{\lambda_{q+1}}\sum_{k\in \Lambda} \E\|\tr(U_k)\|_{C^0_tC_x^1}
			\\& \lesssim \frac{\ell^{-\frac{5}{2}}}{\lambda_{q+1}}\E(1+\|v_q\|_{C_{[t-1,t+1],x}^0}+\|z_q\|_{C_{[t-1,t+1],x}^0}) (1+\|\mathring{R}_q\|^\frac{1}{2}_{C_{[t-1,t+1],x}^0})^5
			\\ &\lesssim \frac{\ell^{-\frac{5}{2}}}{\lambda_{q+1}}(1+\$v_q\$_{C^0,2}+\$z_q\$_{C^0,2}) (1+\$\mathring{R}_q\$_{C^0,5}^{\frac{5}{2}})
			\\&\lesssim \frac{\lambda_{q}^{10}}{\lambda_{q+1}}(1+\lambda_q^{\beta}+\sqrt{2}L)(1+\lambda_{q}^{13\beta})
			\lesssim \frac{\lambda_{q}^{10+14\beta}L}{\lambda_{q+1}} \leq\frac{1}{48}\delta_{q+2}e(t),
			\endaligned
		\end{equation}
		where we used $b>11$ and $3b^2\beta<1$ to have $\lambda_{q}^{10+14\beta}<\lambda_{q}^{b-2b^2\beta}$ and we chose $a$ large enough to absorb the constant. 
		
		Returning to \eqref{Eq+1}, it remains to control 
		\begin{align*}
			&\E \left|  \int_{\mathbb{T}^3}|w_{q+1}^{(c)}|^2+2w_{q+1}^{(p)}\cdot w_{q+1}^{(c)} \dif x \right| 
			+\E \left| \int_{\mathbb{T}^3}|v_\ell-v_q+z_{q+1}-z_q|^2 \dif x  \right| 
			\\&\qquad+ 2\E \left| \int_{\mathbb{T}^3}(v_\ell-v_q+z_{q+1}-z_q)(v_q+z_q)\dif x  \right| 
			+2\E \left| \int_{\mathbb{T}^3}(v_\ell+z_{q+1})\cdot w_{q+1} \dif x \right| .
		\end{align*}
		Using the estimates \eqref{wpr} and \eqref{wc0} we have 
		\begin{align*}
			&\E \left|  \int_{\mathbb{T}^3}|w_{q+1}^{(c)}|^2+2w_{q+1}^{(p)}\cdot w_{q+1}^{(c)} \dif x \right|  
			\lesssim \$w_{q+1}^{(c)}\$^2_{C^0,2}+\$w_{q+1}^{(p)}\$_{C^0,2}\$w_{q+1}^{(c)}\$_{C^0,2}
			\\ &\lesssim   \frac{\lambda_q^{10+18\beta}}{\lambda_{q+1}} + \frac{\lambda_q^{10+16\beta}}{\lambda_{q+1}} +\frac{\lambda_q^{20+36\beta}}{\lambda_{q+1}^2} + \frac{\lambda_q^{20+32\beta}}{\lambda_{q+1}^2}\leq \frac{1}{48}\delta_{q+2}e(t),
		\end{align*}
		which requires $b>11$, $3b^2\beta<1$ to have $\lambda_q^{10+36\beta}<\lambda_{q}^{b-2b^2\beta}$ and $a$ sufficiently large to absorb the constant. Next, we use \eqref{vqa}, \eqref{vqb}, \eqref{vq-vl} and \eqref{z2} to obtain
		\begin{align*}
			&\E\left| \int_{\mathbb{T}^3}|v_\ell-v_q+z_{q+1}-z_q|^2 \dif x  \right| 
			+ 2\E \left| \int_{\mathbb{T}^3}(v_\ell-v_q+z_{q+1}-z_q)(v_q+z_q)\dif x  \right| 
			\\&\lesssim \E\|v_\ell-v_q+z_{q+1}-z_q\|_{C_{t,x}^0}^2+\E\|v_\ell-v_q+z_{q+1}-z_q\|_{C_{t,x}^0}\|z_q+v_q\|_{C_{t,x}^0}
			\\&\lesssim \$v_\ell-v_q\$^2_{C^0,2}+\$z_{q+1}-z_q\$^2_{C^0,2}+(\$v_\ell-v_q\$_{C^0,2}+\$z_{q+1}-z_q\$_{C^0,2})(\$v_q\$_{C^0,2}+\$z_q\$_{C^0,2})
			\\&\lesssim \ell^2\$v_q\$^2_{C^1_{t,x},2}+\lambda_{q+1}^{-\frac{\gamma\varkappa}{4}}	\$z\$_{H^{\frac32+\varkappa},2}^2+\left(\ell\$v_q\$_{C^1_{t,x},2}+\lambda_{q+1}^{-\frac{\gamma\varkappa}{8}}	\$z\$_{H^{\frac32+\varkappa},2}\right)
			\left(\lambda_{q}^\beta+2L\right)
			\\& \lesssim \lambda_{q}^{-3+4\beta} +\lambda_{q}^{\beta}\lambda_{q+1}^{-\frac{\gamma\varkappa}{8}}\leq \frac{1}{48}\delta_{q+2}e(t),
		\end{align*}
		where we used $2b^2\beta+\beta<\min{\{\frac{\gamma\varkappa}{8}b,1\}}$ to have $\lambda_{q}^{\beta-\frac{\gamma\varkappa}{8}b}<\lambda_{q}^{-2b^2\beta}$ and $\lambda_{q}^{-3+4\beta}<\lambda_{q}^{-2b^2\beta}$, and we chose $a$ large enough to absorb the constant. 
		
		Regarding the last term of \eqref{Eq+1}, it follows from
		\eqref{wq+1} that $w_{q+1}$ may be written as the curl of a vector field
		\begin{equation*}
			w_{q+1}=\frac{1}{\lambda_{q+1}}\textrm{curl} \left( \sum_{k\in \Lambda}  ia_k \frac{k\times B_k}{|k|^2}e^{i\lambda_{q+1}k\cdot x}\right).
		\end{equation*}
		Integrating by parts  and using the estimates \eqref{estimate-zq'}, \eqref{vqb}, \eqref{vqc} and \eqref{a_k0} we obtain
		\begin{equation}\label{vl w q+1}
			\aligned
			2\E \left| \int_{\mathbb{T}^3}(v_\ell+z_{q+1})\cdot w_{q+1} \dif x \right| &\lesssim \frac{1}{\lambda_{q+1}} \E\left(  \sum_{k\in \Lambda}\left\| a_k\frac{B_k}{|k|}e^{i\lambda_{q+1}k\cdot x}\right\| _{C_{t,x}^0}\|v_\ell+z_{q+1}\|_{C^0_tC^1_x}\right) 
			\\&\lesssim \frac{1}{\lambda_{q+1}}\$a_k\$_{C^0,2}(\$v_q\$_{C^1_{t,x},2}+\$z_{q+1}\$_{C^1_x,2})
			\\&\lesssim \frac{1}{\lambda_{q+1}}\left(\lambda_{q}^{1+2\beta}+\lambda_{q+1}^{\frac{\gamma}{8}} \$z_{q+1}\$_{H^{\frac32+\varkappa},2} \right)\$\mathring{R}_q\$_{C^0,1}^{\frac12}
			\\&\lesssim \frac{1}{\lambda_{q+1}}(\lambda_{q}^{1+2\beta}+\lambda_{q+1}^{\frac{\gamma}{8}}  ) \leq \frac{1}{48}\delta_{q+2}e(t),
			\endaligned
		\end{equation}
		which requires $3b^2\beta +b\gamma<b$ and $a$ sufficiently large to absorb the constant. Summarizing the above estimates, we have
		\begin{align*}
			\Big| e(t)(1-\delta_{q+2})-\E\|v_{q+1}(t)+{z_{q+1}}(t)\|_{L^2}^2\Big|\leq\frac14\delta_{q+2}e(t).
		\end{align*} Hence, we complete the proof of Proposition~\ref{p:iteration}.
	\end{proof}

	\section{Construction of solutions with prescribed initial condition}\label{sec 4}
	This section aims to prove Theorem~\ref{Thm1.3}, which establishes the existence of global-in-time, probabilistically strong and analytically weak solutions to \eqref{eul1} for every given divergence-free initial condition in $C^{\varkappa}$. In order to prescribe the initial value of the solution, we follow the approach of \cite{HZZ21markov} and refine the previous construction. Our convex integration scheme relies on perturbations defined for every time $t>0$, after which we select a sequence of stopping times $T_L$ to give uniform bounds in $\omega \in \Omega$ on suitable norms of the solutions up to time $T_L$, depending on $L$. This approach was previously pursued in \cite{HLP22} and \cite{Umb23}.
	
	First, we introduce some necessary modifications to the notation. Given $\varkappa>0$, let $u_{0}\in C^{\varkappa}$ be $\mathbf{P}$-a.s. independent of the given Wiener process $B$. Denote by $(\mathcal{F}_{t})_{t\geq0}$ the augmented joint canonical filtration on $(\Omega,\mathcal{F})$ generated by $B$ and $u_{0}$. Therefore, $B$ is an $(\mathcal{F}_{t})_{t\geq0}$-Wiener process and $u_{0}$ is $\mathcal{F}_0$-measurable. 
	Given a Banach space $(E,\|\cdot\|_E)$ and $T>0$, we write $C_TE=C([0,T];E)$ equipped with the supremum norm $\|f\|_{C_TE}=\sup_{t\in[0,T]}\|f(t)\|_{E}$. For $\kappa\in(0,1)$, we  define $C^\kappa_TE$ as the space of $\kappa$-H\"{o}lder continuous functions from $[0,T]$ to $E$, endowed with the norm $\|f\|_{C^\kappa_TE}=\sup_{s,t\in[0,T],s\neq t}\frac{\|f(s)-f(t)\|_E}{|t-s|^\kappa}+\|f(t)\|_{C_TE}$.
	For $T>0$, we denote by  $C^{N}_{T,x}$ the space of $C^{N}$-functions on $[0,T]\times\mathbb{T}^{3}$, $N\in\N_{0}$, equipped with the norm
	$$
	\|f\|_{C^N_{T,x}}=\sum_{\substack{0\leq n+|\alpha|\leq N\\ n\in\N_{0},\alpha\in\N^{3}_{0} }}\|\partial_t^n \partial_x^\alpha f\|_{C_T L^\infty}.
	$$
	
	Following \cite{HZZ21markov}, we incorporate the initial value into the linear part $z$ by defining $z$ as the solution of the stochastic linear equation originating from $u_0$:
	\begin{equation}\label{li:sto}
		\aligned
		\dif z+z \dif t&=\dif B,
		\\\div z&=0,
		\\ z(0)&=u_0,
		\endaligned
	\end{equation}
	and let $u$ be any solution of \eqref{eul1} with the same initial condition. Then the difference $v:=u-z$ solves
	\begin{equation}\label{nonlinear1}
		\aligned
		\partial_t v-z+\div((v+z)\otimes (v+z))+\nabla P&=0,
		\\\div v&=0,
		\\ v(0)&=0.
		\endaligned
	\end{equation}
	Here, $z$ is divergence-free under the assumptions on the noise $B$, and we denote the pressure term associated with $v$ by $P$.
	
	As before, the iteration is indexed by a parameter $q\in\mathbb{N}_{0}$. At each step $q$, a pair $(v_q, \mathring{R}_q)$ is constructed to solve the following system
	\begin{equation}\label{induction ps}
		\aligned
		\partial_tv_q-z_q+\div((v_q+{z_q})\otimes (v_q+{z_q}))+\nabla p_q&=\div \mathring{R}_q,
		\\
		\div v_q&=0,\\
		v_{q}(0)&=0.
		\endaligned
	\end{equation}
	Here, we decompose $z=z^{in}+Z$ with $z^{in}(t)=e^{-t}u_{0}$ and define $z_q:=z^{in}+Z_q=z^{in}+\mP_{\leq f(q)}Z$ with $f(q)=\lambda_{q+1}^{\gamma/8}$.
	Using the same argument as Lemma~\ref{Le1}, we have the following result.
	\bp\label{pro5.1}
	Suppose that $\tr((-\Delta)^{3/2+\varkappa} GG^*)<\infty$ for the given $\varkappa>0$ as above. Then for any   $\delta \in (0,\frac{1}{2})$ and $T>0$
	$$
	\E\left[\|Z\|_{C_T^{1/2-\delta}H^{3/2+\varkappa}}\right]<\infty.
	$$
	\ep
	
	By the Sobolev embedding, we know that $\|f\|_{L^\infty}\leq C_S\|f\|_{H^{3/2+\varkappa}}$ for $\varkappa>0$ and some constant $C_{S}\geq1$.
	We define the following stopping time for $0<\delta<\frac1{12}$ and $L\in \mathbb{N}$
	\begin{equation}\label{stopping time ps}
		\aligned
		T_L:=& \inf\{t\geq0: \|Z\|_{C_t^{1/2-\delta}H^{3/2+\varkappa}}\geq L/C_S\}
		\wedge  L,
		\endaligned
	\end{equation}
	which is $\mathbf{P}$-a.s. strictly positive such that $T_L\to \infty$ almost surely as $L\to \infty$.
	Furthermore, it holds for $t\in[0, T_L]$ that
	\begin{equation}\label{z ps}
		\| Z_q(t)\|_{L^\infty}\leq L, \quad\|\nabla Z_q(t)\|_{L^\infty}\leq L\lambda_{q+1}^{\frac\gamma8}, \quad \|Z_q\|_{C_t^{1/2-\delta}L^\infty}\leq L.
	\end{equation}
	
	Moreover, without loss of generality, we can suppose that  for the above given $\varkappa>0$
	\begin{align}\label{eq:u0} 
		\|u_{0}\|_{C^{\varkappa}}\leq N,
		\quad
		\mathbf{P}-a.s. 
	\end{align}
	for some finite constant $N$. 
	Indeed, for a general initial condition $u_0 \in C^{\varkappa}$  $\mathbf{P}$-a.s.,  one defines $\Omega_N := \{ N-1 \leq \| u_0 \|_{C^{\varkappa}} < N\} \in \mathcal{F}_0$.
	Then, given the existence of infinitely many solutions $u^N$ on each $\Omega_N$, one can define $u := \sum_{N \in \N} u^N \mathbf{1}_{\Omega_N}$, solving the equation with initial condition $u_0$.
	We maintain this additional assumption on the initial condition throughout the convex integration step in Proposition~\ref{p:iteration'}. 
	We also denote a deterministic constant $M_L:=(L+N)^2+L+N$, which ensures that for any $t\in [0,T_L]$,
	\begin{align}\label{induction z}
		\|z_q(t)\|_{L^\infty}\leq \|z^{in}(t)\|_{L^\infty}+\|Z_q(t)\|_{L^\infty}\leq M_L^{1/2}.
	\end{align}
	
	The frequency $\lambda_q$ and the mollification parameter $\ell$ all  retain the same structure as in Section \ref{311}, and we modify the value of $\delta_q$ as
	\begin{align*}
		\delta_{1}=M_L, \qquad \delta_{q}=\lambda_{q}^{-2\beta},\ q\geq2.
	\end{align*}
	We also denote $\tau_{q}=\lambda_{q+1}^{-2\beta}$, $q\in \mathbb{N}_0\cup\{-1\}$, $\sigma_q=\lambda_{q}^{-2\beta}$, $q\in \mathbb{N}_0\setminus\{2\}$ and $\sigma_2=K$, where $K\geq 1$ is a constant used  in the proof of Theorem~\ref{Thm1.3} to distinguish different solutions. Moreover, we define the parameter
		\begin{align*}
			L_q= M_L^{m^{q}},
		\end{align*} 
		where  $L\in  \mathbb{N}$ and the coeﬃcient $m\in \mathbb{N}$ satisfies $m<b$. The parameter $L_q$ serves to control the deterioration of iterative estimates over increasingly larger time intervals of the form $[0,T_L]$, $L\in  \mathbb{N}$.
	In addition, the values of the determining parameters $a,b,m,\gamma$ and $\beta$ may differ, since further conditions need to be satisfied. Details are provided in Section~\ref{s:par} below.

	Under the above assumptions, our main iteration is given as follows. The proof of this result is presented in Section~\ref{s:it'} below.

	\begin{proposition}\label{p:iteration'}
		Suppose that $\tr((-\Delta)^{3/2+\varkappa} GG^*)<\infty$ for the given $\varkappa>0$ as above.	Let $N\geq 1$ and assume \eqref{eq:u0}. There exists a choice of parameters $a, b,m,\gamma$ and $\beta$ such that the following holds true: Let $(v_{q},\mathring{R}_{q})$ for some $q\in\N_{0}$ be an $(\mathcal{F}_{t})_{t\geq 0}$-adapted solution to \eqref{induction ps} on $[0,T_L]$ for every $L\in  \mathbb{N}$ and satisfy
		\begin{equation}\label{inductionv ps}
			\|v_{q}(t)\|_{L^{\infty}}\leq\begin{cases}
				2\bar{M}L_q^{1/2}\sum_{ r=1}^{q}(\delta_{r}^{1/2}+\sigma_{r}^{1/2})+3\bar{M}M_0 L_q^{1/2},&t\in (\tau_{q}\wedge T_L,  T_L],\\
				0, &t\in [0,\tau_{q}\wedge T_L],
			\end{cases}
		\end{equation}
		where $M_0:=K+\sum_{r\geq1}\delta_r^{\frac12}<\infty$, and $\bar{M}$ is a universal constant which will be fixed in \eqref{bar M2} below. Suppose further that
		\begin{align}\label{inductionv C1}
			\|v_q\|_{C^1_{t,x}}&\leq  L_q\lambda_q^\frac32\delta_q^{\frac12},\quad t\in[0, T_L],
		\end{align}
		\begin{align}\label{eq:R}
			\|\mathring{R}_q(t)\|_{L^\infty}\leq
			L_q\delta_{q+1},\quad t\in (\tau_{q-1}\wedge T_L,T_L],
		\end{align}
		\begin{align}\label{bd:R}
			\|\mathring{R}_{q}(t)\|_{L^\infty}\leq L_q \sum_{r=1}^{q+1}\delta_r,\quad t\in[0, T_L].
		\end{align}
		Then there exists an $(\mathcal{F}_{t})_{t\geq 0}$-adapted process  $(v_{q+1},\mathring{R}_{q+1})$ which solves \eqref{induction ps} and for every $L\in \mathbb{N}$
		\begin{equation}\label{iteration ps}
			\|v_{q+1}(t)-v_q(t)\|_{L^\infty}\leq \begin{cases}
				\bar{M} L_{q+1}^{1/2} (\delta_{q+1}^{\frac12}+\sigma_{q+1}^{\frac12}),& t\in (2\tau_{q-1}\wedge T_L,T_L],\\
				\bar{M} L_{q+1}^{1/2}(M_0+\delta_{q+1}^{\frac12}+\sigma_{q+1}^{\frac12}),& t\in (\tau_{q+1}\wedge T_L,2\tau_{q-1}\wedge T_L],\\
				0,
				&t\in [0,\tau_{q+1}\wedge T_L],
			\end{cases}
		\end{equation}
		\begin{equation}\label{iteration R}
			\|\mathring{R}_{q+1}(t)\|_{L^\infty}\leq\begin{cases}
			L_{q+1}	\delta_{q+2},& t\in (\tau_q\wedge T_L,T_L],\\
			L_{q+1}	\delta_{q+2}+\sup_{s\in[t-\ell,t]}\|\mathring{R}_{q}(s)\|_{L^\infty},&t\in [0,\tau_q\wedge T_L].
			\end{cases}
		\end{equation}
		Furthermore, $(v_{q+1},\mathring{R}_{q+1})$ obeys \eqref{inductionv ps},  \eqref{inductionv C1}, \eqref{eq:R} and \eqref{bd:R} with $q$ replaced by $q+1$ and for $t\in (2\tau_{q-1}\wedge T_L,T_L]$, we have
		\begin{align}\label{p:gamma}
			\big|\|v_{q+1}\|_{L^2}^2-\|v_q\|_{L^2}^2-3\sigma_{q+1}\big|\leq M_1 L_{q+1} \delta_{q + 1},
		\end{align}
		where $M_1$ is a universal constant (see \eqref{def M_1} below). 
	\end{proposition}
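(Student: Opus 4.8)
The strategy is to run, on the time interval $[0,T_L]$, exactly the same convex integration construction as in the proof of Proposition~\ref{p:iteration}, but with the mollified quantities, the amplitude functions, the Beltrami perturbation $w_{q+1}=w_{q+1}^{(p)}+w_{q+1}^{(c)}$, and the new Reynolds stress $\mathring R_{q+1}$ all now being pathwise objects (on each $\omega$) rather than objects estimated in expectation. Concretely, I would first choose parameters $a,b,m,\gamma,\beta$ satisfying the conditions of Section~\ref{s:par} (in particular $m<b$, so that $L_q=M_L^{m^q}$ grows slower than $\lambda_q$). The main structural difference from Proposition~\ref{p:iteration} is the presence of the time cut-offs: $v_q$ is required to vanish on $[0,\tau_q\wedge T_L]$, and $\mathring R_q$ obeys the sharp bound $L_q\delta_{q+1}$ only on $(\tau_{q-1}\wedge T_L,T_L]$ while obeying the weaker summed bound $L_q\sum_{r=1}^{q+1}\delta_r$ everywhere. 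To respect this I would multiply the perturbation by a smooth temporal cut-off $\chi_q(t)$ that is $0$ on $[0,\tau_{q+1}]$ and $1$ on $[2\tau_{q+1},\infty)$ (and insert a further cut-off handling the intermediate region where only $\mathring R_\ell$ is mollified but not yet bounded by $L_q\delta_{q+1}$), so that the new perturbation is supported where the old stress is small, and $v_{q+1}=v_q$ near $t=0$ as demanded by \eqref{iteration ps}.

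The key steps, in order, are as follows. (1) Mollify $v_q,\mathring R_q,z_q$ at scale $\ell=\lambda_q^{-4}$, picking up the commutator stress $R^{\mathrm{com}}$ exactly as in \eqref{Rcom}, and record the pathwise mollification estimates using \eqref{inductionv C1} and \eqref{z ps}; on $[0,T_L]$ all the $z$-norms are bounded by $L$ or $M_L^{1/2}$ by construction of $T_L$. (2) Define $\rho$, the transport coefficients $\phi_k^{(j)}$, and the amplitudes $a_k$ as in \eqref{eq:rho3}--\eqref{defak}, now with an extra factor $\sigma_{q+1}$ inside (or added to) $\rho$ so that the energy increment in \eqref{p:gamma} comes out to $3\sigma_{q+1}$; the pathwise analogues of Proposition~\ref{estiak} follow from Appendix~\ref{ap:A} with $\|v_q\|_{C^0},\|z_q\|_{C^0},\|\mathring R_q\|_{C^0}$ now controlled on $[0,T_L]$ by \eqref{inductionv ps}, \eqref{induction z}, \eqref{bd:R}, up to the growth factor $L_q$. (3) Set $v_{q+1}:=v_\ell+\chi_q w_{q+1}$ and verify \eqref{inductionv ps}, \eqref{inductionv C1} and \eqref{iteration ps} at level $q+1$: the $L^\infty$ bound on $w_{q+1}^{(p)}$ is $\lesssim L_q^{1/2}(\|\mathring R_q\|_{C^0}^{1/2}+\ell^{1/2}+\delta_{q+1}^{1/2}+\sigma_{q+1}^{1/2})$, which telescopes into the stated sum once the constant $\bar M$ is fixed by \eqref{bar M2} and $a$ is taken large; on the intermediate time window one uses the weaker bound on $\mathring R_q$, producing the extra $M_0$ in the second line of \eqref{iteration ps}. (4) Define $\mathring R_{q+1}$ by the same decomposition \eqref{def R q+1}--\eqref{Reynold} into transport, oscillation, error, corrector and commutator terms, apply the stationary phase Lemma~\ref{lemma1} with $m=1,2$, and obtain the pathwise bound $\|\mathring R_{q+1}(t)\|_{L^\infty}\le L_{q+1}\delta_{q+2}$ on $(\tau_q\wedge T_L,T_L]$; on $[0,\tau_q\wedge T_L]$ the perturbation is (essentially) switched off, so $\mathring R_{q+1}$ differs from $\mathring R_q$ only through the commutator/mollification terms, giving the second line of \eqref{iteration R} and, after summation, \eqref{bd:R} at level $q+1$. (5) Finally compute $\|v_{q+1}\|_{L^2}^2-\|v_q\|_{L^2}^2$ as in Section~\ref{s:en}: the principal term contributes $\int_{\mathbb T^3}|w_{q+1}^{(p)}|^2\,\dif x = \int 3\sigma_{q+1} + (\text{traceless oscillatory remainder}) + \mathrm{tr}\,\mathring R_\ell$-type terms, and Lemma~\ref{lemma1} bounds the oscillatory remainder and the cross terms $w^{(p)}\!\cdot w^{(c)}$, $v_\ell\cdot w_{q+1}$ by $\lesssim L_{q+1}\delta_{q+1}$, which is \eqref{p:gamma} with $M_1$ from \eqref{def M_1}.

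The new ingredient compared with Proposition~\ref{p:iteration} is the bookkeeping of the growth factor $L_q=M_L^{m^q}$ together with the stopping time $T_L$, and that is where the main obstacle lies: one must check that every error estimate at level $q+1$, which a priori carries a power of $L_q$ (coming from powers of $\|v_q\|_{C^0}\lesssim \bar M L_q^{1/2}M_0$, $\|\mathring R_q\|_{C^0}\lesssim L_q\sum\delta_r$, etc.), is absorbed into $L_{q+1}\delta_{q+2}=M_L^{m^{q+1}}\lambda_{q+2}^{-2\beta}$ by the gain $\lambda_{q+1}^{-(1-\alpha)}$ from the stationary phase lemma. Since $L_q^{O(1)}=M_L^{O(m^q)}$ is bounded by $\lambda_q^{\epsilon}=a^{\epsilon b^q}$ once $a$ is large (because $m<b$), this works, but it forces the precise inequalities among $b,m,\gamma,\beta$ collected in Section~\ref{s:par}, and one must be careful that the constant $\bar M$ and the universal constants $M_0,M_1$ are genuinely $q$- and $L$-independent. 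The second delicate point is matching the time cut-offs so that \eqref{inductionv ps}, \eqref{eq:R}, \eqref{iteration ps} and \eqref{iteration R} are consistent across the windows $[0,\tau_{q+1}]$, $(\tau_{q+1},2\tau_{q-1}]$, $(2\tau_{q-1},T_L]$; this is a routine but error-prone verification, handled exactly as in \cite[Section 5]{HZZ21markov} and \cite{HLP22,Umb23}.
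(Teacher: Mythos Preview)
Your outline is essentially the paper's own proof: mollify at scale $\ell=\lambda_q^{-4}$, replace the energy profile in $\rho$ by the constant $r_0\sigma_{q+1}/(2\pi)^3$, multiply the Beltrami perturbation by a single smooth temporal cut-off $\chi$ vanishing on $[0,\tau_{q+1}]$, set $v_{q+1}=v_\ell+\chi w_{q+1}$, and estimate the resulting stress decomposition pathwise on $[0,T_L]$ using \eqref{z ps}, \eqref{inductionv ps}, \eqref{bd:R} together with the stationary phase Lemma~\ref{lemma1}. Two points deserve correction.

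First, the mechanism by which the $L_q$ factors close is \emph{not} the one you describe. You write that $L_q^{O(1)}\leq \lambda_q^{\varepsilon}$ ``once $a$ is large (because $m<b$)''. That inequality cannot hold uniformly in $L$, since the parameters $a,b,m,\gamma,\beta$ are chosen independently of $L$ while $L_q=M_L^{m^q}$ grows with $L$. The paper absorbs the $L_q$ powers \emph{into $L_{q+1}$}, not into $\lambda_q$: every error term carries at most $L_q^{9}$ (coming from up to four factors of $\|v_q+z_q\|_{C^0}\lesssim L_q^{1/2}$ and up to five factors of $1+\|\mathring R_q\|_{C^0}\lesssim L_q$), and the choice $m=10$ gives $L_q^{9}\leq M_L^{m^{q+1}}=L_{q+1}$. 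The condition $m<b$ plays no role in the iteration step; it is used only later, in the proof of Theorem~\ref{Thm1.3}, to show that $\sum_q L_{q+1}\lambda_{q+1}^{-c}$ is finite.

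Second, the cut-off $\chi$ in the paper equals $1$ on $[\tau_q,\infty)$, not on $[2\tau_{q+1},\infty)$ as you propose, and there is no ``further cut-off''. The single $\chi$ with transition window $(\tau_{q+1},\tau_q)$ already produces the three time regimes in \eqref{iteration ps}: on $(2\tau_{q-1}\wedge T_L,T_L]$ one has $\chi=1$ and $2\tau_{q-1}-\ell\geq\tau_{q-1}$, so \eqref{eq:R} applies to $\mathring R_\ell$ and the sharp bound $L_q^{1/2}\delta_{q+1}^{1/2}$ on $\|w_{q+1}^{(p)}\|_{L^\infty}$ holds; on $(\tau_{q+1}\wedge T_L,2\tau_{q-1}\wedge T_L]$ one falls back on the global bound \eqref{bd:R}, which produces the extra $M_0$; on $[0,\tau_{q+1}\wedge T_L]$ both $\chi$ and $v_q$ vanish. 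The derivative $\chi'$ contributes an additional term $R^{\mathrm{cut\text{-}off}}=\mathcal{R}(\chi' w_{q+1})+(1-\chi^2)\mathring R_\ell$ in the stress, and the second piece is exactly what generates the $\sup_{s\in[t-\ell,t]}\|\mathring R_q(s)\|_{L^\infty}$ term in \eqref{iteration R} on $[0,\tau_q\wedge T_L]$.
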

	
	\begin{remark}\label{remark 2}
		In comparison to the previous results, we obtain solutions only in $L^p([0,T_L];C^\vartheta)$ for any $p\in[1,\infty)$, instead of achieving continuity in time. (The spatial H\"older regularity $\vartheta$ depends on $p$; see the proof of Theorem~\ref{Thm1.3} below for details).
		This limitation arises from the introduction of the cut-off function $\chi$ defined in \eqref{def:chi}, which ensures that the support of the new perturbation remains away from zero.
		This operation makes it difficult to absorb the $L^\infty$-norm of $\mathring{R}_{q+1}(t)$ into small parameter $\delta_{q+2}$ for $t\in [0,\tau_{q}\wedge T_L)$, as discussed in Section~\ref{415} below. Consequently, we impose assumption \eqref{bd:R} on $\|\mathring{R} _q\|_{L^\infty}$, which prevents $v_q$ from converging uniformly in the $L^\infty$-norm within $[0, T_L]$, see \eqref{iteration ps}.
	\end{remark}

	\begin{proof}[Proof of Theorem~\ref{Thm1.3}] 
		We aim to apply Proposition~\ref{p:iteration'} iteratively to obtain a sequence of solutions $(v_q,\mathring{R}_q)$ to the system \eqref{induction ps}.
		
		\emph{Step 1.} We define the initial step of the iteration $v_0$ to be identically zero
		on $[0,T_{L}]$. In that case, we have
		$\mathring{R}_0=z_{0}\mathring\otimes z_{0}-\mathcal{R}z_0$ so that for any $t\in [0,T_L]$
		\begin{align*}
			\|\mathring{R}_0(t)\|_{L^\infty}\leq \|z_0(t)\|_{L^\infty}^2+\|z_0(t)\|_{L^\infty}\leq (L+N)^2+L+N\leq L_0\delta_{1}.
		\end{align*}
		Hence, \eqref{eq:R} and \eqref{bd:R} are satisfied at the level  $q=0$.
		
		\emph{Step 2.}
		Let the additional assumption  \eqref{eq:u0} be satisfied  for some $N\geq 1$. We  iteratively apply Proposition~\ref{p:iteration'} and obtain $(\mathcal{F}_{t})_{t\geq 0}$-adapted process $(v_q,\mathring{R}_{q})$ satisfying the inductive assumption. Using \eqref{inductionv C1}, {\eqref{iteration ps}} and interpolation, we
		have for each $p\in[1,\infty)$, $\vartheta>0 $  and $q\geq2$
		\begin{align*}
			\int_0^{T_L}  \| v_{q + 1} (t) - v_q (t) \|_{C^{\vartheta}}^p \dif t
			&	\lesssim	\int_0^{T_L}\| v_{q + 1} (t) - v_q (t) \|_{L^\infty}^{(1-\vartheta)p} \| v_{q + 1} - v_q \|_{C^1_{[0,T_L],x}}^{p\vartheta}  \dif t 
			\\ &\lesssim \int_{\tau_{q+1} \wedge T_L}^{2 \tau_{q-1} \wedge T_L} L_{q+1}^p
				\bar{M}^p(M_0+\delta_{q+1}^{\frac12}+\sigma_{q+1}^\frac12)^{(1-\vartheta)p} \lambda_{q+1}^{\frac32 p\vartheta} \delta_{q+1}^{\frac12 p\vartheta} \dif t
			\\ &\qquad \qquad + \int_{2 \tau_{q-1} \wedge T_L}^{T_L} L_{q+1}^p \bar{M}^p ( \delta^{\frac12}_{q + 1}+\sigma_{q+1}^\frac12)^{(1-\vartheta)p}  \lambda_{q+1}^{\frac32 p\vartheta} \delta_{q+1}^{\frac12 p\vartheta
				\dif t}
			\\ & \lesssim (3\bar{M})^{p}\left( M_0^pL_{q+1}^p \lambda_{q}^{\frac32bp\vartheta-2 \beta-b\beta p\vartheta}
				+ L_{q+1}^pL\lambda_{q+1}^{\frac32p \vartheta-p\beta} \right) 
			\\ & \lesssim (3\bar{M}M_0)^{p} M_L^{2pm^{q+1}}\left(a^{b^q(\frac32bp\vartheta-2 \beta-b\beta p\vartheta)}+a^{b^{q+1}(\frac32p \vartheta-p\beta)} \right).
		\end{align*}
		Now choose $0<\vartheta<\min{\{\frac23\beta, \frac{4\beta}{3pb} \}}$ so that 
		\begin{align*}
			\tfrac32bp\vartheta-2 \beta-b\beta p\vartheta=:-\gamma_1<0,\qquad \tfrac32p \vartheta-p\beta=:-\gamma_2<0,
		\end{align*}
		and define
		\begin{align*}
				q_0=\max\left\{ q\in \mathbb{N}: M_L^{2pm^{q+1}}> a^{\gamma_1b^q} \vee  a^{\gamma_2b^{q+1}} \right\}.
			\end{align*}
			The maximum above exists since $m<b$ and $\gamma_1,\gamma_2>0$, and simple algebraic manipulations show that $q_0$ satisfies
			\begin{align*}
				q_0<\frac{\log\log M_L+\log2p+ \log m -\log \log a-\log \gamma_2 \wedge \log \gamma_1}{\log b- \log m} \leq C(1+\log \log M_L)
			\end{align*}
			for some constant $C$ not depending on $M_L$. 
		With this choice of $\vartheta$ and $q_0$, we estimate
		\begin{align*}
			\sum_{q\geq2}\|v_{q+1}-v_q\|_{L^p([0,T_L];C^\vartheta)} &\leq 3\bar{M}M_0 \sum_{q=2}^{q_0} M_L^{2m^{q+1}}\left( a^{-\frac{\gamma_1}{p}b^q}+a^{-\frac{\gamma_2}{p}b^{q+1}} \right)
			\\ &\qquad +3\bar{M}M_0 \sum_{q>q_0} M_L^{2m^{q+1}}\left( a^{-\frac{\gamma_1}{p}b^q}+a^{-\frac{\gamma_2}{p}b^{q+1}}\right)
			\\ &\leq3\bar{M}M_0(q_0M_L^{2m^{q_0+1}}+\bar{C}) \leq 3\bar{M}M_0 (M_L^{2m^{2+\log \log M_L}}+\bar{C}),
		\end{align*}
		where the constant $\bar{C}$ may vary from line to line. Thus, $v_q$ converges in $L^p([0,T_L];C^\vartheta)$ towards a limit $v$ with uniform bound in $\omega \in \Omega$
		\begin{align*}
			\|v\|_{L^p([0,T_L];C^\vartheta)} & \leq \|v_2\|_{L^p([0,T_L];C^\vartheta)} + \sum_{q\geq2}\|v_{q+1}-v_q\|_{L^p([0,T_L];C^\vartheta)} 
			\\&\leq 7\bar{M}M_0M_L^{2m^4}\lambda_{2}^{\frac{3}{2}\vartheta}+3\bar{M}M_0 (M_L^{2m^{2+\log \log M_L}}+\bar{C}).
		\end{align*}
		Moreover, it is easy to check that $v$ restricted to $[0,T_{L-1}]$ is simply the limit of $v_q$ in $L^p([0,T_{L-1}];C^\vartheta)$ for every $L>1$. 
		This uniquely identifies a limit object in $L^p_{\rm{loc}}([0,+\infty);C^\vartheta)$, denoted again by $v$, by simply gluing together limits at different values of $L$. 
		
		Furthermore, it follows from \eqref{eq:R}, \eqref{bd:R} that for all $p\in[1,\infty)$
		\begin{align*}
			\int_0^{T_L}\|\mathring{R}_q(t)\|^{p}_{L^\infty}\dif t\lesssim LM_L^{pm^{q}}\delta^{p}_{q+1}+M_0^{2p}M_L^{pm^{q}}\tau_{q-1} \to 0, \quad\mbox{as}\quad q\to\infty.
		\end{align*}
		Thus, $v$ is an analytically weak solution to \eqref{nonlinear1}.
		By setting $u=v+z$, we obtain an $(\mathcal{F}_t)_{t\geq0}$-adapted  analytically weak solution 
		to \eqref{eul1} which belongs to $L^p_{\rm{loc}}([0,+\infty);C^\vartheta)$. By the choice of parameters in Section~\ref{s:par} below, actually we can take $\vartheta\in \big(0, \min{\{\frac{\varkappa}{12{\cdot}30^3p},\frac{1}{3{\cdot}30^3p}\}} \big) $. Since $v_q(0)=0$ we deduce $v(0)=0$, which implies that $u(0)=u_0$.
		
		\emph{Step 3.}
		We have obtained global-in-time solutions to \eqref{eul1} of class $L^p_{\rm{loc}}([0,+\infty);C^\vartheta)$. Now we explain that another regularity of the solution claimed in the statement of the theorem holds. We write 
		\begin{align*}
			u(t)=u_0-\int_{0}^{t} \mathbb{P} \div (u(r)\otimes u(r)) \dif r +B(t).
		\end{align*}
		In view of the proof at the second step, we have $u\in L^2_{\rm{loc}}([0,+\infty);L^\infty)$. Then it follows that
		\begin{align*}
			\left\|  \int_{s}^{t} \mathbb{P} \div (u(r)\otimes u(r)) \dif r \right\|_{H^{-1}} \lesssim  \int_{s}^{t} \| u(r)\otimes u(r)\|_{L^\infty} \dif r \to 0,\quad \mbox{as} \quad |t-s|\to 0.
		\end{align*}
		By the assumption of noise $B\in C_{\rm{loc}}^{\frac12-}([0,+\infty);H_{\sigma}^{-1})$ we deduce $u\in C_{\rm{loc}}([0,+\infty);H_{\sigma}^{-1})$.
		
		\emph{Step 4.}
		Finally, we prove the non-uniqueness of solutions, still  under the additional assumption \eqref{eq:u0}.	In view of  \eqref{p:gamma}, we have on $t\in (2\tau_{-1}\wedge T_L,T_L]$
		\begin{align}\label{eq:K}
			\begin{aligned}
				\big|\|v\|_{L^2}^2-3K\big|&\leq \left|\sum_{q=0}^\infty(\|v_{q+1}\|_{L^2}^2-\|v_{q}\|_{L^2}^2-3\sigma_{q+1})\right|+3\sum_{q\neq1}\sigma_{q+1}
				\\
				&\leq M_1\sum_{q=0}^{\infty}L_{q+1}\delta_{q+1}+3\sum_{q\neq 1}\sigma_{q+1}=:c,
			\end{aligned}
		\end{align}
		where $c$ is a constant independent of $K$. This implies non-uniqueness by choosing different $K$ and using the same argument as in \cite[Theorem 5.4]{HZZ21markov}. 
	\end{proof}

	\subsection{Proof of Proposition~\ref{p:iteration'}}
	\label{s:it'}
	
	\subsubsection{Choice of parameters}
	\label{s:par} 
	First, for a fixed integer $b\geq 30$,  we let $\beta, \gamma \in (0,1)$ be small parameters satisfying 
	\begin{equation*}
		4b^2\beta<1, \quad b^2\beta<\varkappa, \quad
		16b\beta<\gamma\varkappa, \quad 2b^2\beta+\frac{b\gamma}{8}<1.
	\end{equation*}
	This can be obtained by choosing $\gamma=\frac{1}{b}$ and $\beta$ sufficiently  small such that
	$\beta<\min{\{\frac{\varkappa}{16b^2},\frac{1}{4b^2}\}}$. The parameter $m$ is chosen such that $L_q^9=M_L^{9m^q}\leq M_L^{m^{q+1}}=L_{q+1}$ and we can take $m=10$.
	The last free parameter $a\in 2^{8b\mathbb{N}} $ is chosen sufficiently large ensuring $f(q)\in \mathbb{N}$. We further require $a$ satisfies
	\begin{align*}
		30 \left(7\bar{M}M_0+1\right)  <\min{\{ a^{4\varkappa-2b^2\beta}, a^{1-2b^2\beta-\frac{b }{8}\gamma}, a^{\frac{b\varkappa}{8}\gamma-2b^2\beta}  \}},
	\end{align*}
	and
	\begin{align*}
		30  \left( 2+7\bar{M}M_0\right)^4 \left(1+M_0^2 \right)^5 < \min{\{  a^{b-28-4b^2\beta},  a^{\frac{b}{2}-10-b\beta}   \}}.
	\end{align*}
	We emphasize that these parameters are independent of $L$. However, choosing different $K$, we get different $a$.

	\subsubsection{Construction of $v_{q+1}$}\label{512}

	Now, we extend $v_q, z_q, \mathring{R}_q$ and $p_q$ to $t<0$ by taking them equal to the value at $t=0$. Then $(v_q,z_q,\mathring{R}_q)$ also satisfies equation \eqref{induction ps} for $t<0$ as $\p_tv_q(0)=0$. In order to guarantee smoothness throughout the construction, we also replace $(v_q,z_q,\mathring{R}_q)$  by a mollified field $(v_\ell,z_\ell,\mathring{R}_\ell)$. In addition, we denote $z^{in}_\ell=({z^{in}}*_x\phi_\ell)*_t\varphi_\ell$ and $Z_\ell=({Z_q}*_x\phi_\ell)*_t\varphi_\ell$.
	Using a mollification estimate and (\ref{inductionv C1}), we have for $t\in[0, T_L]$
	\begin{equation}\label{error ps1}
		\|v_q(t)-v_\ell(t)\|_{L^\infty}\lesssim \ell\|v_q\|_{C_{t,x}^1}\lesssim  L_q  \ell \lambda_q^{\frac{3}{2}}\delta_q^{\frac12}\leq \frac{1}{2} L_q \delta_{q+1}^{\frac12},
	\end{equation}
	where we  used the fact that $\ell\lambda_q^\frac32<\lambda_{q+1}^{-\beta}$ and we  chose $a$ sufficiently large to absorb the implicit constant. For $N\geq 1$, $t\in[0, T_L]$ using \eqref{inductionv C1} yields 
	\begin{align}\label{inductionv l}
		\|v_\ell\|_{C^N_{t,x}}\lesssim \ell^{-(N-1)} 	\|v_q\|_{C^1_{t,x}} \leq L_q \ell^{-N+1}\lambda_{q}^{\frac32}\delta_{q}^\frac12.
	\end{align}
	
	Following \cite{HZZ21markov}, we modify the definition of $\rho$ by replacing $\gamma_\ell$ with the constant $\sigma_{q+1}$, namely,
	$$
	\rho:=\sqrt{\ell^2+|\mathring{R}_\ell|^2}+\frac{r_0\sigma_{q+1}}{(2\pi)^3}.
	$$
	Consequently, the amplitude functions $a_k$ are modified as well via the formula \eqref{defak} with $\rho$ replaced by the above. Through a similar calculation as used in the proof of Proposition \ref{estiak}, we can deduce for any $s\in [0,T_L]$, $N\geq1$
	\begin{align}
		\|a_k\|_{C^N_{s,y}}
		&\lesssim\ell^{-2N-\frac{1}{2}}(1+\|v_q\|_{C^0_{s,y}}+\|z_q\|_{C^0_{s,y}})^N(1+\|\mathring{R}_q\|_{C^0_{s,y}})^{N+1},\label{a_k1'}
		\\	\|\partial_\tau a_k\|_{C^N_{s,y}} 
		&\lesssim\ell^{-2N-\frac{1}{2}}(1+\|v_q\|_{C^0_{s,y}}+\|z_q\|_{C^0_{s,y}})^{N+1}(1+\|\mathring{R}_q\|_{C^0_{s,y}})^{N+1},\label{a_k2'}
		\\ 	\|\partial_\tau a_k+ik\cdot( v_\ell+z_\ell)a_k\|_{C^N_{s,y}} 
		&\lesssim \ell^{-2N-\frac{1}{2}}(1+\|v_q\|_{C^0_{s,y}}+\|z_q\|_{C^0_{s,y}})^{N}(1+\|\mathring{R}_q\|_{C^0_{s,y}})^{N+1}.\label{derivat a_k'}
	\end{align}
	And for $N=0$, we also have
	\begin{align}
		\|a_k(s)\|_{L^{\infty}}&\leq Mr_0^{-\frac12}(\ell^{\frac{1}{2}}+\|\mathring{R}_\ell(s)\|^{\frac{1}{2}}_{L^\infty}+r_0\delta_{q+1}^\frac12), \label{a_k0'}
		\\	\|\partial_\tau a_k\|_{C^0_{s,y}}&\lesssim (1+\|\mathring{R}_q\|^{\frac{1}{2}}_{C^0_{s,y}})(1+\|v_q\|_{C^0_{s,y}}+\|z_q\|_{C^0_{s,y}}),\label{tauak0'}
		\\ 	\|\partial_\tau a_k+ik\cdot( v_\ell+z_\ell)a_k\|_{C^0_{s,y}} & \lesssim \lambda_{q}^{-1}(\|\mathring{R}_q\|^{\frac{1}{2}}_{C^0_{s,y}}+\ell^\frac{1}{2}+\delta_{q+1}^\frac12), \label{deriva ak0'}
	\end{align}
	where $M$ is a universal constant given in \eqref{A5} below.
	
	Given a smooth non-decreasing cut-off function
	\begin{equation}\label{def:chi}
		\aligned
		\chi(t)=\begin{cases}
			0,& t\leq\tau_{q+1},\\
			\in (0,1),& t\in (\tau_{q+1},{\tau_q} ),\\
			1,&t\geq {\tau_q}.
		\end{cases}
		\endaligned
	\end{equation}
	Let  $ w^{(p)}_{q+1}$ and $  w_{q+1}^{(c)}$  be given as in Section \ref{413} with $a_{k}$ replaced by the above.
	We define the perturbations $\tilde w^{(p)}_{q+1}$, $ \tilde w_{q+1}^{(c)}$ and new velocity ${v}_{q+1}$  as follows
	$$\tilde w^{(p)}_{q+1}:=w^{(p)}_{q+1}\chi,\quad \tilde w^{(c)}_{q+1}:=w^{(c)}_{q+1}\chi, \quad {v}_{q+1}:=v_\ell +\tilde{w}_{q+1}^{(p)}+\tilde{w}_{q+1}^{(c)}.$$
	Using \eqref{eq:R}, \eqref{a_k0'} and the fact $\ell\leq\delta_{q+1}$, $2\tau_{q-1}-\ell \geq \tau_{q-1}$, we obtain
	for $t\in(2\tau_{q-1}\wedge T_L, T_L]$ 
	\begin{equation}\label{estimate wqp ps}
		\aligned
		\|\tilde w_{q+1}^{(p)}(t)\|_{L^\infty}&\leq |\Lambda|r_0^{-\frac12}M(\ell^{\frac{1}{2}}+\|\mathring{R}_\ell(t)\|^{\frac{1}{2}}_{L^\infty}+r_0^{\frac12}\sigma_{q+1}^\frac12)
		\\ &\leq r_0^{-\frac12}|\Lambda|M\delta_{q+1}^\frac12+|\Lambda| M\sigma_{q+1}^\frac12+r_0^{-\frac12}|\Lambda|M \sup_{s\in[t-\ell,t]}\|\mathring{R}_q(s)\|^{\frac{1}{2}}_{L^\infty}
		\\&\leq (\bar{M}-1)L_q^{\frac12}(\delta_{q+1}^{\frac12}+\sigma_{q+1}^\frac12),
		\endaligned
	\end{equation}
where $r_0$ and $|\Lambda|$ retain the same definitions as provided in Section~\ref{313}. Specifically, $r_0$ is the constant defined in Lemma~\ref{Belw2} and $|\Lambda|$ denotes the cardinality of the set $\Lambda :=\cup_{j=1}^{8} \Lambda_j$ introduced in Lemma~\ref{Belw2}. Additionally, $M$ is the geometric constant defined in \eqref{A5} and $\bar{M}$ is a universal constant satisfying
	\begin{align}\label{bar M2}
		2r_0^{-\frac12}|\Lambda|M+|\Lambda|M+1<\bar{M}.
	\end{align}
	Similarly, using \eqref{bd:R} and \eqref{a_k0'} again, we obtain for $t\in (\tau_{q + 1}\wedge T_L,2\tau_{q-1}\wedge T_L]$
	\begin{equation}\label{estimate wqp ps1}
		\aligned
		\|\tilde w_{q+1}^{(p)}(t)\|_{L^\infty} &\leq r_0^{-\frac12}|\Lambda|M\delta_{q+1}^\frac12+|\Lambda|M\sigma_{q+1}^\frac12+r_0^{-\frac12}|\Lambda|M \sup_{s\in[t-\ell,t]}\|\mathring{R}_q(s)\|^{\frac{1}{2}}_{L^\infty}
		\\&\leq (\bar{M}-1)L_q^{\frac12}(\delta_{q + 1}^{\frac12}+\sigma_{q+1}^\frac12+M_0).
		\endaligned
	\end{equation}
	
	For the $L^\infty$-norm of $\tilde w_{q+1}^{(c)}$, combining \eqref{induction z}, \eqref{inductionv ps}, \eqref{bd:R} and \eqref{a_k1'} we deduce for $t\in[0, T_L]$
	
	\begin{equation}\label{correction est ps}
		\aligned
		\|\tilde w_{q+1}^{(c)}(t)\|_{L^\infty}&\lesssim \frac{1}{\lambda_{q+1}}\sum_{k\in \Lambda} \left\|  \nabla a_k \times \frac{B_k}{|k|}e^{i\lambda_{q+1}k\cdot x}\right\| _{C^0_{t,x}}
		\\&\lesssim \frac{\ell^{-\frac52}}{\lambda_{q+1}} \left( 1+\|v_q\|_{C^0_{t,x}}+\|z_q\|_{C^0_{t,x}}\right) \left(1+ \|\mathring{R}_q\|_{C^0_{t,x}}\right)^2
		\\ &\leq \frac{\lambda_{q}^{10}}{\lambda_{q+1}}  \left( 1+7\bar{M}M_0L_q+M_L\right) \left(1+L_qM_0 ^2\right)^2
		\\ &\leq \frac{\lambda_{q}^{10}}{\lambda_{q+1}}L_q^3  \left( 2+7\bar{M}M_0\right) \left(1+M_0 ^2\right)^2 \leq \frac12 L_{q+1}^{\frac12} \delta_{q+1}^{\frac12},
		\endaligned
	\end{equation}
	where the last inequality is justified by $L_q^9\leq L_{q+1}$, $10+b\beta<\frac{b}{2}$ and $a$ being sufficiently large to ensure
	\begin{align}\label{a1}
		10 \left( 2+7\bar{M}M_0\right)^2 \left(1+M_0 ^2\right)^4<  a^{\frac{b}{2}-10-b\beta}.
	\end{align}
	
	With these bounds established, we now have all the necessary components to finalize the proof of Proposition~\ref{p:iteration'}. To streamline the presentation, we will divide the details into several subsections.
	
	\subsubsection{Estimates on the velocity} \label{413'} We split into three parts to complete the estimates of velocity.
	
	\textbf{I.} Proof of \eqref{iteration ps}.
	First, combining  \eqref{error ps1},  (\ref{estimate wqp ps}) and \eqref{correction est ps} we obtain for $t\in( 2\tau_{q-1}\wedge T_L, T_L]$
	\begin{align*}
		\|v_{q+1}(t)-v_{q}(t)\|_{L^{\infty }}&\leq \|\tilde w_{q+1}^{(p)}(t)\|_{L^{\infty }}+ \|\tilde w_{q+1}^{(c)}(t)\|_{L^{\infty }}+\|v_{\ell}(t)-v_{q}(t)\|_{L^{\infty }}
		\\ &\leq (\bar{M}-1)L_q^{\frac12}(\delta_{q+1}^{\frac12}+\sigma_{q+1}^\frac12)+\frac12 L_{q+1}^{\frac12}\delta_{q+1}^{\frac12} +\frac12  L_q \delta_{q+1}^{\frac12} 
		\leq \bar{M}L_{q+1}^{\frac12}(\delta_{q+1}^{\frac12}+\sigma_{q+1}^\frac12).
	\end{align*}
	For $t\in (\tau_{q+1} \wedge T_L, 2\tau_{q-1} \wedge T_L]$, using
	\eqref{error ps1}, (\ref{estimate wqp ps1}) together with \eqref{correction est ps} yields 
	
	\begin{align*}
		\|v_{q+1}(t)-v_{q}(t)\|_{L^{\infty }}&\leq \|\tilde w_{q+1}^{(p)}(t)\|_{L^{\infty }}+ \|\tilde w_{q+1}^{(c)}(t)\|_{L^{\infty }}+\|v_{\ell}(t)-v_{q}(t)\|_{L^{\infty }}
		\\ &\leq  (\bar{M}-1) L_q^{\frac12} (\delta_{q + 1}^{\frac12}+\sigma_{q + 1}^{\frac12}+M_0)+\frac12 L_{q+1}^{\frac12}\delta_{q+1}^{\frac12}+\frac12 L_q \delta_{q+1}^{\frac12} 
		\\ & \leq \bar{M}L_{q+1}^{\frac12}(\delta_{q + 1}^{\frac12}+\sigma_{q + 1}^{\frac12}+M_0).
	\end{align*}
	For $t\in  [0,\tau_{q+1}\wedge T_L]$, by the definition of $\chi$, it holds $\chi(t)=0$ as well as $v_{q}(t)=0$ by \eqref{inductionv ps}. This implies
	$$
	\|v_{q+1}(t)-v_{q}(t)\|_{L^{\infty }}=\|v_{\ell}(t)-v_{q}(t)\|_{L^{\infty}}=0.
	$$
	Hence \eqref{iteration ps} follows.

	\textbf{II.} Proof of \eqref{inductionv ps} on the level $q+1$.
	For the first bound in \eqref{inductionv ps} on the level $q+1$, we use \eqref{iteration ps} to deduce for $t\in (\tau_{q+1}\wedge T_L,T_{L}]$
	
	\begin{align*}
		\|v_{q+1}(t)\|_{L^{\infty}}&\leq \sum_{ r=0}^{q}\|v_{r+1}(t)-v_{r}(t)\|_{L^{\infty}}
		\\&\leq \bar{M} \left(\sum_{ r=0}^{q}L_{r+1}^{\frac12}(\delta_{r+1}^{\frac12}+\sigma_{r + 1}^{\frac12})+\sum_{ r=0}^{q}L_{r+1}^{\frac12}(\delta_{r+1}^{\frac12}+\sigma_{r + 1}^{\frac12}+M_0)\mathbf{1}_{ \{t\in (\tau_{r+1}\wedge T_{L},2\tau_{r-1}\wedge T_{L}] \}}\right)
		\\&\leq 2\bar{M} L_{q+1}^{\frac12}\sum_{r=1}^{q+1}(\delta_{r}^{\frac12}+\sigma_{r}^{\frac12})+3\bar{M}M_0L_{q+1}^{\frac12},
	\end{align*}
	where we used the fact that by $2\tau_{r }\leq \tau_{r-1}$  each $t\in [0,T_L]$ only belongs to three intervals $(\tau_{r+1}\wedge T_{L},2\tau_{r-1} \wedge T_{L}]$. For the second bound in \eqref{inductionv ps}, by \eqref{iteration ps} we find for $t\in [0,\tau_{q+1}\wedge T_L]$ that
	$$
	\|v_{q+1}(t)\|_{L^{\infty }}\leq \sum_{r=0}^{q}\|v_{r+1}(t)-v_{r}(t)\|_{L^{\infty }}=0.
	$$
	This implies  \eqref{inductionv ps} on the level $q+1$.  
	
	\textbf{III.} Proof of \eqref{inductionv C1} on the level $q+1$. From the definition of $\chi$ and $2\leq a^{b(b-1)\beta}$, we have
	\begin{align}\label{deriv chi}
		\|\chi'\|_{C^0_t}\leq (\tau_{q}-\tau_{q + 1})^{-1}\leq \tau_{q+1}^{-1}\leq \lambda_{q+2}^{2\beta }.
	\end{align}
	By using \eqref{inductionv ps}, \eqref{bd:R}, \eqref{a_k1'}, \eqref{a_k0'}, \eqref{estimate wqp ps1} and the same argument as in \eqref{wp 1}, we obtain for any $t\in [0,T_L]$ 
	\	\begin{equation}\label{principle est2 ps}
		\aligned
		\|\tilde w_{q+1}^{(p)}\|_{C_{t,x}^1} &\lesssim 
		\ell^{-\frac{5}{2}}\left( 1+\|v_q\|_{C^0_{t,x}}+\|z_q\|_{C^0_{t,x}} \right) \left(1+ \|\mathring{R}_q\|_{C^0_{t,x}}\right)^2
		\\ & \quad +\lambda_{q+1}\left( 1+\|v_q\|_{C^0_{t,x}}+\|z_q\|_{C^0_{t,x}} \right)\left(1+\|\mathring{R}_q\|_{C^0_{t,x}}^{\frac{1}{2}}\right)+ \|\chi'\|_{C^{0}_{t}}\| w_{q+1}^{(p)}\|_{C_{t,x}^0}
		\\  & \lesssim L_q^3(\lambda_{q+1}+\lambda_{q}^{10}+\lambda_{q+2}^{2\beta }) (2+7\bar{M}M_0)(1+M_0^2)^2\leq \frac14L_{q+1} \lambda_{q+1}^{\frac32}\delta_{q + 1}^{\frac12},
		\endaligned
	\end{equation}
	which requires $b>20$, $2b^2\beta<1$ and $a$ large enough satisfying \eqref{a1} to absorb the implicit constant.
	Similarly, it follows from \eqref{wc 1}, \eqref{inductionv ps}, \eqref{bd:R}, \eqref{a_k1'}, \eqref{a_k0'} and \eqref{correction est ps} that for any $t\in [0,T_L]$
	\begin{equation}\label{correction est2 ps}
		\aligned
		\|\tilde w_{q+1}^{(c)}\|_{C_{t,x}^1} &\lesssim 
		\frac{\ell^{-\frac{9}{2}}}{\lambda_{q+1}} \left( 1+ \|v_q\|_{C^0_{t,x}}+\|z_q\|_{C^0_{t,x}} \right)^2\left(1+\|\mathring{R}_q\|_{C^0_{t,x}}\right)^3
		\\&\quad +\ell^{-\frac{5}{2}}\left( 1+\|v_q\|_{C^0_{t,x}}+\|z_q\|_{C^0_{t,x}} \right)^2 \left(1+ \|\mathring{R}_q\|_{C^0_{t,x}}\right)^2+ \|\chi'\|_{C^{0}_{t}}\| w_{q+1}^{(c)}\|_{C_{t,x}^0}
		\\ & \lesssim L_q^5 \left(\frac{\lambda_{q}^{18}}{\lambda_{q+1}}+ \lambda_{q}^{10}+\frac{\lambda_{q}^{10}\lambda_{q+2}^{2\beta}}{\lambda_{q+1}} \right)  (2+7\bar{M}M_0)^2(1+M_0^2)^3\leq \frac14 L_{q+1}\lambda_{q+1}^{\frac32}\delta_{q + 1}^{\frac12},
		\endaligned
	\end{equation}
	where we used $L_q^5\leq L_{q+1}$, $b>20$, $2b^2\beta<1$ and chose $a$ sufficiently large satisfying \eqref{a1} to absorb the implicit constant. Then, we combine \eqref{inductionv l}, \eqref{principle est2 ps} and \eqref{correction est2 ps} to obtain for any $t\in[0, T_L]$
	\begin{equation*}
		\aligned
		\|v_{q+1}\|_{C^1_{t,x}}\leq \|v_\ell\|_{C^1_{t,x}}+\|\tilde w_{q+1}^{(p)}\|_{C^1_{t,x}}+\|\tilde w_{q+1}^{(c)}\|_{C^1_{t,x}}
		\leq L_q\lambda_{q}^{\frac32}\delta_{q}^{\frac12}+\frac12 L_{q+1}\lambda_{q+1}^\frac32\delta_{q + 1}^{\frac12}  \leq   L_{q+1}\lambda_{q+1}^\frac32\delta_{q + 1}^{\frac12} ,
		\endaligned
	\end{equation*}
	which requires $\frac32+b\beta<\frac32 b$ and $a$ large enough to absorb the constant.
	Hence \eqref{inductionv C1} follows.

	\subsubsection{Estimates on the energy}
	
	We control the energy similarly to Section \ref{s:en}.
	By the definition of $v_{q+1}$, we find
	\begin{align}\label{eq:deltaE ps}
		\begin{aligned}
			&\big|\|v_{q+1}\|_{L^2}^2-\|v_q\|_{L^2}^2-3\sigma_{q+1}\big|
			\leq \left| \int_{\mathbb{T}^3} \big(|\tilde w_{q+1}^{(p)}|^2 -\frac{3\sigma_{q+1}}{(2\pi)^3}\big) \dif x \right|+  \int_{\mathbb{T}^3} |\tilde w_{q+1}^{(c)}|^2 \dif x 
			\\ &\qquad \qquad 
			+2\left| \int_{\mathbb{T}^3}\tilde w_{q+1}^{(p)}w_{q+1}^{(c)}  \dif x \right|+ 2\left| \int_{\mathbb{T}^3}v_\ell w_{q+1}\dif x \right|  +\left|\|v_\ell\|_{L^2}^2-\|v_q\|_{L^2}^2\right| .
		\end{aligned}
	\end{align}
	We start with the first term on the right-hand side of \eqref{eq:deltaE ps}. Taking trace on the both sides of \eqref{wpxwp} and using the fact that $\mathring{R}_\ell$ is traceless we deduce for $t\in (2\tau_{q-1}\wedge T_L,T_L]$
	\begin{align*}
		|\tilde w_{q+1}^{(p)}|^2-\frac{3\sigma_{q+1}}{(2\pi)^3}=3r_0^{-1}\sqrt{\ell^2+|\mathring{R}_\ell|^2}+\sum_{k\in \Lambda}\tr(U_k) e^{i\lambda_{q+1}k\cdot x}
		,
	\end{align*}
	hence
	\begin{equation}\label{eq:gg ps}
		\aligned
		\left| \int_{\mathbb{T}^3} |\tilde w_{q+1}^{(p)}|^2 -\frac{3\sigma_{q+1}}{(2\pi)^3} \dif x \right| \leq 3\cdot(2\pi)^3r_0^{-1}\ell
		&+3\cdot(2\pi)^3r_0^{-1}\sup_{t\in(2\tau_{q-1}\wedge T_L,T_L]}\|\mathring{R}_\ell(t)\|_{L^\infty}
		\\ &+\sum_{k\in \Lambda}\left| \int_{\mathbb{T}^3}\tr(U_k) e^{i\lambda_{q+1}k\cdot x}\dif x\right|.
		\endaligned
	\end{equation}
	Here we estimate each term separately, by the choice of parameters, we find
	\begin{align*}
		3\cdot(2\pi)^3r_0^{-1}\ell\leq 3\cdot(2\pi)^3r_0^{-1}\lambda_{q}^{-4}\leq\frac{1}{ 10}\delta_{q+1},
	\end{align*}
	which requires $b\beta <1$ and $a$ large enough to absorb the implicit constant. 
	We use \eqref{eq:R} on $\mathring{R}_q$ and $2\tau_{q-1}-\ell\geq \tau_{q-1}$ to deduce 
	\begin{align*}
		3\cdot(2\pi)^3r_0^{-1}\sup_{t\in(2\tau_{q-1}\wedge T_L,T_L]}\|\mathring{R}_\ell(t)\|_{L^\infty} \leq  3\cdot(2\pi)^3r_0^{-1} \sup_{t\in(\tau_{q-1}\wedge T_L,T_L]}\|\mathring{R}_q(t)\|_{L^\infty}\leq  \frac{M_1}{10}L_q\delta_{q+1},
	\end{align*}
	where  $M_1$ is a universal constant satisfying
	\begin{align}\label{def M_1}
		M_1\geq 30\cdot (2\pi)^3r_0^{-1}.
	\end{align} 
	For the last term in \eqref{eq:gg ps}, we use the same argument as in \eqref{eq:Uk} to obtain
	\begin{align*}
		\sum_{k\in \Lambda} \left| \int_{\mathbb{T}^3}\tr(U_k) e^{i\lambda_{q+1}k\cdot x}\dif x\right|  &\lesssim \frac{\ell^{-\frac{5}{2}}}{\lambda_{q+1}}(1+\|v_q\|_{C_{t,x}^0}+\|z_q\|_{C_{t,x}^0}) (1+\|\mathring{R}_q\|^\frac{1}{2}_{C_{t,x}^0})^5
		\\&\lesssim \frac{\lambda_{q}^{10}}{\lambda_{q+1}}L_q^4(2+7\bar{M}M_0)(1+M_0)^5
		\leq\frac{1}{10}L_{q+1}\delta_{q+1},
	\end{align*}
	where we have used $L_q^9\leq L_{q+1}$, $10+2b\beta<b$ and chose $a$ large enough satisfying \eqref{a1} to absorb the constant. 
	This completes the bound for \eqref{eq:gg ps}.

	Going back to \eqref{eq:deltaE ps}, we control the  remaining terms as follows.
	We use \eqref{correction est ps} and $L_q^9\leq L_{q+1}$ to deduce for $t\in (2\tau_{q-1}\wedge T_L,T_L]$
	\begin{align*}
		\int_{\mathbb{T}^3} |\tilde w_{q+1}^{(c)}|^2 \dif x &
		\lesssim \frac{\lambda_{q}^{20}}{\lambda_{q+1}^2} L_q^6  \left( 2+7\bar{M}M_0\right)^2 \left(1+M_0^2 \right)^4 \leq \frac{1}{10} L_{q+1}\delta_{q+1},
	\end{align*}
	where we used  $10+b\beta<b$ and chose $a$ sufficiently large satisfying \eqref{a1} to absorb the constant. Similarly, using the estimates \eqref{estimate wqp ps} and \eqref{correction est ps} we have for $t\in (2\tau_{q-1}\wedge T_L,T_L]$
	\begin{align*}
		2\left| \int_{\mathbb{T}^3}\tilde w_{q+1}^{(p)}w_{q+1}^{(c)}  \dif x \right| 
		\lesssim \frac{\lambda_{q}^{10}}{\lambda_{q+1}} L_q^4 \left( 2+7\bar{M}M_0\right) \left(1+M_0^2 \right)^2\bar{M}M_0
		\leq \frac{1}{10} L_{q+1}\delta_{q+1},
	\end{align*}
	which requires $10+2b\beta<b$ and $a$ satisfying \eqref{a1} to absorb the constant.
	
	Using the same argument as \eqref{vl w q+1},
	integrating by parts  and using the estimates \eqref{bd:R}, \eqref{inductionv l}, \eqref{a_k0'} and $L_q^9\leq L_{q+1}$, we obtain
	\begin{align*}
		2 \left| \int_{\mathbb{T}^3}v_\ell\cdot w_{q+1} \dif x \right| &\lesssim \frac{1}{\lambda_{q+1}} \left(  \sum_{k\in \Lambda}\left\| a_k\frac{B_k}{|k|}e^{i\lambda_{q+1}k\cdot x}\right\| _{C_{t,x}^0}\|v_\ell\|_{C^0_tC^1_x}\right) 
		\\&\lesssim \frac{\lambda_{q}^{\frac32}}{\lambda_{q+1}}L_qMr_0^{-\frac12}(\ell^{\frac{1}{2}}+\|\mathring{R}_\ell(t)\|^{\frac{1}{2}}_{L^\infty}+r_0\delta_{q+1}^\frac12) 
		\\&\lesssim \frac{\lambda_{q}^{\frac32}}{\lambda_{q+1}} L_q^2 \bar{M}(\ell^{\frac{1}{2}}+M_0+\delta_{q+1}^\frac12)  \leq  \frac{1}{10}L_{q+1}\delta_{q+1},
	\end{align*}
	where we have used $\frac32 +2b\beta<b$ in the last inequality and we chose $a$ sufficiently large to absorb the constant. 
	
	For the last  term in \eqref{eq:deltaE ps}, by \eqref{error ps1} we have
	\begin{align*}
		|\|v_\ell\|_{L^2}^2-\|v_q\|_{L^2}^2|&\leq \|v_\ell-v_q\|_{L^2}(\|v_\ell\|_{L^2}+\|v_q\|_{L^2})\\ &\lesssim \|v_\ell-v_q\|_{C_tL^\infty}\|v_q\|_{C_tL^\infty}
		\lesssim L_q^2 \ell\lambda_q^\frac32\delta_{q}^{\frac12}\bar{M}M_0  \leq \frac{1}{10}L_{q+1}\delta_{q+1},
	\end{align*}
	which requires $L_q^9\leq L_{q+1}$, $b\beta<1$ and $a$ sufficiently  large to absorb the extra constant.

	Combining the above estimates, then \eqref{p:gamma} follows.
	
	\subsubsection{Estimates on the Reynolds stress $\mathring{R}_{q+1}$}	\label{415}
	The new Reynolds stress is defined following the same approach as in \eqref{def R q+1}, with the original two correctors $w_{q+1}^{(p)}$ and $w_{q+1}^{(c)}$ replaced by their truncated counterparts $\tilde w_{q+1}^{(p)}$ and $\tilde w_{q+1}^{(c)}$. For completeness, we recover the expression for the Reynolds stress at the level $q+1$:
	\begin{align*}
		& \div \mathring{R}_{q+1}-\nabla p_{q+1}
		=\underbrace{\chi(\partial_t+(v_\ell+z_\ell)\cdot \nabla)w_{q+1}^{(p)}}
		_{\div(R^{\textrm{trans}})}+ \underbrace{\chi^2 \div (w_{q+1}^{(p)} \otimes w_{q+1}^{(p)}+\mathring{R}_\ell)}
		_{\div(R^{\textrm{osc}})+\nabla p^{\textrm{osc}}}
		\\&+
		\underbrace{\chi \partial_tw_{q+1}^{(c)}}_{\div(R^{\textrm{error}}_1)}+\underbrace{\chi \div (w_{q+1}^{(p)}\otimes(v_\ell+z_\ell))}_{\div (R^{\textrm{error}}_2)}+\underbrace{\partial_t \chi(w_{q+1}^{(p)}+w_{q+1}^{(c)})+(1-\chi^2)\div \mathring{R}_\ell}_{\div (R^\textrm{cut-off})}
		\\&+\underbrace{\div\left(\chi(v_{q+1}+z_\ell)\otimes w_{q+1}^{(c)}+\chi w_{q+1}^{(c)}\otimes (v_{q+1}+z_\ell)-\chi^2w_{q+1}^{(c)}\otimes  w_{q+1}^{(c)}\right)}_{\div(R^{\textrm{error}}_3)+\nabla p^{\textrm{error}}_3}
		\\&+\underbrace{\div(v_{q+1} \otimes z_{q+1}-v_{q+1} \otimes z_{\ell} +z_{q+1} \otimes v_{q+1}-z_{\ell} \otimes v_{q+1}+z_{q+1}\otimes z_{q+1}-z_{\ell} \otimes z_{\ell})-z_{q+1}+z_{\ell}}_{\div(R^{\textrm{cor}})+\nabla p^{\textrm{cor}}}
		\\ & +\underbrace{\div\left( (v_\ell+z_\ell)\mathring{\otimes}(v_\ell+z_\ell)-((v_q+z_q)\mathring{\otimes}(v_q+z_q))*_x\phi_\ell*_t\varphi_\ell\right)  }_{\div(R^{\textrm{com}})}
		-\nabla p_{\ell}.
	\end{align*}
	We divide these error terms into three parts to estimate. Note that the inductive assumptions \eqref{induction z} and \eqref{inductionv ps} state that $v_q$ and $z_q$ are uniformly bounded on $[0,T_L]$. 
	As a result, the estimates regarding $R^{\textrm{trans}}$, $R^{\textrm{osc}}$, $R^{\textrm{error}}_1$, $R^{\textrm{error}}_2$ and $R^{\textrm{error}}_3$ will not cause significant changes compared to Section~\ref{31}. 
	
	\textbf{I.} 
	First, we employ the same approach as outlined in Section~\ref{31} to estimate the above five terms respectively. For $R^{\textrm{trans}}$, using \eqref{Rtran 1}, \eqref{Rtran 2}, \eqref{induction z}, \eqref{inductionv ps} and \eqref{bd:R}, we deduce for $t\in [0,T_L]$
	\begin{align*}
		\|R^{\textrm{trans}}(t)\|_{L^\infty}&\lesssim\frac{\|\mathring{R}_q\|^{\frac{1}{2}}_{C^0_{t,x}}+\delta_{q+1}^{\frac12}}{\lambda_{q}\lambda_{q+1}^{-\alpha}}+\frac{\ell^{-\frac{13}{2}}(1+\|v_q\|_{C_{t,x}^0}+\|z_q\|_{C_{t,x}^0})^4(1+\|\mathring{R}_q\|_{C^0_{t,x}})^4}
		{\lambda_{q+1}^{1-\alpha}}
		\\ &\lesssim \frac{M_0+M_0L_q}{\lambda_{q}\lambda_{q+1}^{-\alpha}}+
		\frac{\lambda_{q}^{26} L_q^8  \left( 2+7\bar{M}M_0\right)^4 \left(1+M_0^2 \right)^4}{\lambda_{q+1}^{1-\alpha}}
		\leq \frac{1}{10}L_{q+1}\delta_{q+2},
	\end{align*}
	where we have used $L_q^9\leq L_{q+1}$ in the last inequality. We have also used $b>29$, $2b^2\beta+b\alpha<1$ and $a$ large enough to have
	\begin{align}\label{a2}
		10  \left( 2+7\bar{M}M_0\right)^4 \left(1+M_0^2 \right)^5<a^{b-28-2b^2\beta-b\alpha}.
	\end{align}
	Moving to $R^{\textrm{osc}}$, we use \eqref{Rosc}, \eqref{induction z}, \eqref{inductionv ps}, \eqref{bd:R} and $L_q^9\leq L_{q+1}$ to have for $t\in [0,T_L]$
	\begin{align*}
		\|R^{\textrm{osc}}(t)\|_{L^\infty}
		&\lesssim
		\frac{\ell^{-7}}{\lambda_{q+1}^{1-\alpha}}(1+\|v_q\|_{C_{t,x}^0}+\|z_q\|_{C_{t,x}^0})^3 (1+\|\mathring{R}_q\|_{C^0_{t,x}})^5
		\\&\lesssim \frac{\lambda_{q}^{28}}{\lambda_{q+1}^{1-\alpha}}L_q^8(2+7\bar{M}M_0)^3 (1+M_0^2)^5 \leq \frac{1}{10}\delta_{q+2},
	\end{align*}
	where the last inequality is justified by $b>29$, $2b^2\beta+b\alpha<1$ and $a$ being sufficiently large to satisfy \eqref{a2}. Let us focus on $R^{\textrm{error}}$ now, using \eqref{error 1}, \eqref{error 2},  \eqref{induction z}, \eqref{inductionv ps}, \eqref{bd:R} and $L_q^9\leq L_{q+1}$, we have for $t\in [0,T_L]$
	\begin{align*}
		\|R^{\textrm{error}}_1(t)\|_{L^\infty}+\|R^{\textrm{error}}_2(t)\|_{L^\infty} \lesssim& \frac{\ell^{-\frac{13}{2}}}{\lambda_{q+1}^{1-\alpha}}(1+\|v_q\|_{C_{t,x}^0}+\|z_q\|_{C_{t,x}^0})^4(1+\|\mathring{R}_q\|_{C^0_{t,x}})^4
		\\ &+ \frac{\ell^{-\frac{17}{2}}}{\lambda_{q+1}^{2-\alpha}}(1+\|v_q\|_{C_{t,x}^0}+\|z_q\|_{C_{t,x}^0})^4(1+\|\mathring{R}_q\|_{C^0_{t,x}})^5
		\\ \lesssim& \frac{\lambda_{q}^{26}}{\lambda_{q+1}^{1-\alpha}}L_q^9(2+7\bar{M}M_0)^4 (1+M_0^2)^5 \leq \frac{1}{10}L_{q+1}\delta_{q+2},
	\end{align*}
	which requires $b>27$, $2b^2\beta+b\alpha<1$ and $a$ large enough satisfying \eqref{a2} to absorb the constant.
	For $R^{\textrm{error}}_3$, in view of  \eqref{induction z}, \eqref{inductionv ps} and \eqref{correction est ps}, we deduce for $t\in [0,T_L]$
	\begin{align*}L_{q+1}
		\|R^{\textrm{error}}_3(t)&\|_{L^\infty} \lesssim  \|v_{q+1}(t)+z_\ell(t)\|_{L^\infty}\|w_{q+1}^{(c)}(t)\|_{L^\infty}+\|w_{q+1}^{(c)}(t)\|_{L^\infty}^2
		\\ & \lesssim  \frac{\lambda_{q}^{10}}{\lambda_{q+1}} L_q^4( 2+7\bar{M}M_0)^2 (1+M_0^2 )^2
		+ \frac{\lambda_{q}^{20}}{\lambda_{q+1}^2}   L_q^6( 2+7\bar{M}M_0)^2 \left(1+M_0^2 \right)^4
		\leq \frac{1}{10} L_{q+1}\delta_{q+2},
	\end{align*}
	where in the last inequality we have used $10+2b^2\beta<b$ and chose $a$ sufficiently large satisfying \eqref{a2} to absorb the constant. 
	
	\textbf{II.} 
	Compared to Section~\ref{31}, there is a slight difference due to the estimates involving the process $z$, as its initial value is not zero. Therefore, it is crucial to carefully track down the use of mollification estimates on $z_\ell-z_q$ when estimating $R^{\textrm{com}}$ and $R^{\textrm{cor}}$. We begin with $R^{\textrm{cor}}$. Combining \eqref{zq+1 - zq} and \eqref{z ps} we have for any $t\in [0,T_L]$
	\begin{equation}\label{Rcom1 1}
		\|z_{q+1}(t)-z_q(t)\|_{L^\infty}=\|Z_{q+1}(t)-Z_q(t)\|_{L^\infty}
		\lesssim \lambda_{q+1}^{-\frac{\gamma\varkappa}{8}} \|Z(t)\|_{H^{\frac{3}{2}+\varkappa}}\leq  \lambda_{q+1}^{-\frac{\gamma\varkappa}{8}}M_L.
	\end{equation}
	Using  \eqref{z ps}, \eqref{eq:u0} and mollification estimates, we deduce for any $\delta\in (0,\frac{1}{12})$ and $t\in [0,T_L]$ 
	\begin{equation}\label{Rcom1 2}
		\aligned
		\|z_\ell(t)-z_q(t)\|_{L^\infty} &\leq \|z^{in}_\ell(t)-z^{in}(t)\|_{L^\infty}+	\|Z_\ell(t)-Z_q(t)\|_{L^\infty} 
		\\& \lesssim \ell \|z^{in}\|_{C^1_tC^0_x}+\ell^{\varkappa}\|z^{in}\|_{C^0_tC^\varkappa_x}+ \ell\|Z_q\|_{C^0_{t}C^1_x}+\ell^{\frac12-\delta}\|Z_q\|_{C^{1/2-\delta}_{t}C^0_x}
		\\ &\lesssim \ell^{\varkappa}\|u_0\|_{C^{\varkappa}}+\ell^{\frac{5}{12}}M_L+\ell \lambda_{q+1}^{\frac{\gamma}{8}}M_L\lesssim (\ell^\varkappa+\lambda_{q}^{-1}\lambda_{q+1}^{\frac{\gamma}{8}})M_L.
		\endaligned
	\end{equation}
	Then we combine \eqref{Rcom1 1} with \eqref{Rcom1 2} and use inductive estimates \eqref{induction z} and \eqref{inductionv ps} to have for $t\in[0,T_L]$
	\begin{align*}
		\|R^{\textrm{cor}}(t)\|_{L^\infty} &\leq  (\|v_{q+1}(t)\|_{L^\infty}+\|z_{q+1}(t)\|_{L^\infty}+\|z_{\ell}(t)\|_{L^\infty})\|z_\ell(t)-z_{q+1}(t)\|_{L^\infty} 
		\\&\lesssim (\lambda_q^{-4\varkappa}+\lambda_{q}^{-1}\lambda_{q+1}^{\frac{\gamma}{8}}+\lambda_{q+1}^{-\frac{\gamma\varkappa}{8}})(7\bar{M}M_0L_{q+1}^{\frac12}+M_L)M_L
		\\& \leq (\lambda_q^{-4\varkappa}+\lambda_{q}^{-1}\lambda_{q+1}^{\frac{\gamma}{8}}+\lambda_{q+1}^{-\frac{\gamma\varkappa}{8}})L_{q+1}(7\bar{M}M_0+1) \leq \frac{1}{10} L_{q+1} \delta_{q+2}.
	\end{align*}
	In the last inequality, we require
	$b^2\beta<\varkappa$,  $16b\beta<\gamma\varkappa$, $2b^2\beta+\frac{b\gamma}{8}<1$ and increase $a$ sufficiently large to have
	\begin{align}\label{a3}
		7\bar{M}M_0+1&\leq \frac{1}{30}\min{\{ a^{4\varkappa-2b^2\beta}, a^{1-2b^2\beta-\frac{b }{8}\gamma}, a^{\frac{b\varkappa}{8}\gamma-2b^2\beta}\}}.
	\end{align} 
	
	For $R^{\textrm{com}}$,	using \eqref{z ps}, \eqref{induction z}, $L_q^9\leq L_{q+1}$ and standard mollification estimates as in \eqref{moll}, we obtain for $\delta \in (0,\frac{1}{12})$ and $t\in [0,T_L]$
	\begin{align*}
		\|R^{\textrm{com}}(t)\|_{L^\infty}\lesssim & \left(\ell \|v_q\|_{C^1_{t,x}}+\ell \|Z_q\|_{C^0_tC^1_x}+\ell^{\frac12-\delta}\|Z_q\|_{C^{1/2-\delta}_tC^0_x}\right)
		\left( \|v_q\|_{C^0_{t,x}} +  \|Z_q\|_{C^0_{t,x}} +  \|z^{in}\|_{C^0_{t,x}}\right) 
		\\ &+ \left( \ell \|z^{in}\|_{C^1_tC^0_x} +\ell^{\varkappa } \|z^{in}\|_{C^0_tC^\varkappa_x} \right) 	\left( \|v_q\|_{C^0_{t,x}} +  \|Z_q\|_{C^0_{t,x}} +  \|z^{in}\|_{C^0_{t,x}}\right) 
		\\ \leq & \left( \ell \lambda_{q}^{\frac32}+ \ell \lambda_{q+1}^{\frac{\gamma}{8}}L+\ell^{\frac{5}{12}}L+\ell M_L+\ell^{\varkappa}M_L \right) \left( 7\bar{M}M_0L_q^{\frac12}+M_L \right) 
		\\ \leq & \left( \lambda_{q}^{-1}\lambda_{q+1}^{\frac{\gamma}{8}}+\lambda_{q}^{-4\varkappa}\right) L_q^2\left(7 \bar{M}M_0+1\right)
		\leq  \frac{1}{10}L_{q+1}\delta_{q+2},
	\end{align*}
	where we have used $2b^2\beta+\frac{b}{8}\gamma<1$, $b^2\beta<\varkappa$ and chose $a$ large enough satisfying \eqref{a3} to absorb the constant.
	
	\textbf{III.} 
	At last, we estimate $R^{\textrm{cut-off}}$. We first bound $\partial_t \chi(w_{q+1}^{(p)}+w_{q+1}^{(c)})$, applying Lemma~\ref{lemma1} with $m=1$ together with \eqref{a_k1'}, \eqref{a_k0'} and , we have for $t\in [0,T_L]$ 
	\begin{align*}
		\|\partial_t \chi \mathcal{R}(w_{q+1}^{(p)}(t)+w_{q+1}^{(c)}(t)) \|_{L^\infty} \lesssim & \frac{\ell^{-\frac{9}{2}}\lambda_{q+2}^{2\beta}}{\lambda_{q+1}^{1-\alpha}}(1+\|v_q\|_{C^0_{t,x}}+\|z_q\|_{C^0_{t,x}})^2(1+\|\mathring{R}_q\|_{C^0_{t,x}})^{3}
		\\ &+\frac{\ell^{-\frac{13}{2}}\lambda_{q+2}^{2\beta}}{\lambda_{q+1}^{2-\alpha}}(1+\|v_q\|_{C^0_{t,x}}+\|z_q\|_{C^0_{t,x}})^3(1+\|\mathring{R}_q\|_{C^0_{t,x}})^{4}
		\\ \lesssim &\frac{\lambda_{q}^{18}\lambda_{q+2}^{2\beta}}{\lambda_{q+1}^{1-\alpha}} L_q^7 \left( 2+7\bar{M}M_0 \right)^3\left( 1+M_0^2\right)^4
		\leq  \frac{1}{10}L_{q+1}\delta_{q+2},
	\end{align*}
	where we have used $b>27$, $18+4b^2\beta+b\alpha<b$ and chose $a$ large enough to have 
	\begin{align*}
		\left( 2+7\bar{M}M_0 \right)^3\left( 1+M_0^2 \right)^4<a^{b-4b^2\beta-b\alpha-18}.
	\end{align*}
	
	For $(1-\chi^2)\mathring{R}_\ell$, it is necessary to divide the time interval into two segments. 
	By the definition of $\chi$, we know  $(1-\chi^2)\mathring{R}_\ell\equiv0$ for $t\in(\tau_q \wedge T_L,T_L]$. Hence, summarizing all the estimates we have established above, we have for $t\in(\tau_q \wedge T_L,T_L]$
	\begin{align*}
		\|\mathring{R}_{q+1}(t)\|_{L^\infty}\leq L_{q+1}\delta_{q+2}.
	\end{align*}
	For $t\in [0,\tau_q\wedge T_L]$, 	we have 
	\begin{align}\label{estimate R q+1}
		\|\mathring{R}_{q+1}(t)\|_{L^\infty}\leq L_{q+1}\delta_{q+2}+\|\mathring{R}_\ell(t)\|_{L^\infty} \leq L_{q+1}\delta_{q+2}+ \sup_{s\in [t-\ell,t] }\|\mathring{R}_q(s)\|_{L^\infty}.
	\end{align}
	This completes the proof of \eqref{iteration R}. We then use \eqref{bd:R} and \eqref{estimate R q+1} to obtain for $t\in [0,T_L]$
	\begin{align*}
		\|\mathring{R}_{q+1}(t)\|_{L^\infty}\leq L_{q+1}\delta_{q+2}+ \sup_{s\in [t-\ell,t] }\|\mathring{R}_q(s)\|_{L^\infty} \leq L_{q+1}\delta_{q+2}+ L_q \sum_{r=1}^{q+1}\delta_r =L_{q+1}\sum_{r=1}^{q+2}\delta_r,
	\end{align*} 
	which implies \eqref{bd:R} at the level $q+1$.
	
	Hence, the proof of Proposition~\ref{p:iteration'} is complete.
	
	\appendix
	\renewcommand{\appendixname}{Appendix~\arabic{section}}
	\renewcommand{\theequation}{A.\arabic{equation}}
	\section{Beltrami waves}\label{Bw2}
	In this section, we recall Beltrami waves defined in \cite[Section 3]{DelSze13}, which are adapted to the convex integration scheme in Proposition~\ref{p:iteration}.
	It is crucial to emphasize that this construction is purely deterministic, with none of the functions depending on $\omega$. We start by defining Beltrami waves.
	Let $\lambda_0\geq1$ and let $A_k\in \mathbb{R}^3$ be such that 
	\begin{align*}
		A_k \cdot k =0, \quad |A_k|=\frac{1}{\sqrt{2}}, \quad A_{-k}=A_k
	\end{align*}
	for $k\in \mathbb{Z}^3$ with $|k|=\lambda_0$. Furthermore, we define the complex vector
	\begin{align*}
		B_k=A_k+i \frac{k}{|k|}\times A_k\in \mathbb{C}^3.
	\end{align*}
	By construction, the vector $B_k$ has the properties
	\begin{align*}
		|B_k|=1,\quad B_k \cdot k =0,\quad ik\times B_k=\lambda_0B_k,\quad B_{-k}=\overline{B_k},
	\end{align*}
	and 
	\begin{align}\label{A1}
		(B_k\otimes B_{k'}+B_{k'}\otimes B_{k})(k+k')=(B_k\cdot B_{k'})(k+k').
	\end{align}
	The following lemma presents a useful property of linear combinations of complex Beltrami plane waves, as detailed in  \cite[Proposition 3.1]{DelSze13}.
	\begin{lemma}\label{Belw11}
		Let $\lambda_0\geq1$ and $B_k$ be defined as above. Then for any choice of coefficients $a_k\in \mathbb{C}$ with $\overline{a_k}=a_{-k}$ the vector field
		\begin{equation}
			W(\xi)=\sum_{|k|=  \lambda_0}a_k B_k e^{i k \cdot \xi} 
		\end{equation}
		is a real-valued, divergence-free Beltrami vector field with $\mathrm{curl}W=\lambda_0 W$. 
		Furthermore, since $B_k \otimes B_{-k}+B_{-k}\otimes B_k= \mathrm{Id}- \frac{k}{|k|} \otimes \frac{k}{|k|}$, we have
		\begin{equation}\label{A3}
			\frac{1}{(2\pi)^3}\int_{\mathbb{T}^3} W\otimes W \dif \xi=\frac{1}{2} \sum_{|k|= \lambda_0}|a_k|^2\left( \mathrm{Id}-\frac{k}{|k|} \otimes\frac{k}{|k|} \right) .
		\end{equation}
	\end{lemma}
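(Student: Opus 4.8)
The plan is to establish the three spectral properties of $W$ mode by mode, and then compute the spatial average of $W\otimes W$ using orthogonality of the exponentials $e^{ik\cdot\xi}$ over $\mathbb{T}^3$.

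First, since $\lambda_0\geq1$ the index set $\{k\in\mathbb{Z}^3:|k|=\lambda_0\}$ does not contain the origin and is invariant under $k\mapsto -k$. Pairing the $k$-th and $(-k)$-th summands and using $\overline{a_k}=a_{-k}$, $B_{-k}=\overline{B_k}$ and $\overline{e^{ik\cdot\xi}}=e^{-ik\cdot\xi}$, each such pair has the form $z+\bar z$, so $W$ is real-valued. Divergence-freeness is immediate from $\div\big(a_kB_ke^{ik\cdot\xi}\big)=i(B_k\cdot k)\,a_ke^{ik\cdot\xi}=0$ by the property $B_k\cdot k=0$, and the Beltrami identity follows from $\mathrm{curl}\big(a_kB_ke^{ik\cdot\xi}\big)=\big(ik\times B_k\big)a_ke^{ik\cdot\xi}=\lambda_0 a_kB_ke^{ik\cdot\xi}$, using $ik\times B_k=\lambda_0 B_k$; summing over $k$ gives $\div W=0$ and $\mathrm{curl}\,W=\lambda_0 W$.

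For the identity \eqref{A3} I would expand
\[
W\otimes W=\sum_{|k|=|k'|=\lambda_0}a_ka_{k'}\,(B_k\otimes B_{k'})\,e^{i(k+k')\cdot\xi},
\]
integrate over $\mathbb{T}^3$ and use $\frac{1}{(2\pi)^3}\int_{\mathbb{T}^3}e^{i(k+k')\cdot\xi}\,\dif\xi=\delta_{k',-k}$, which kills all terms except $k'=-k$ and leaves $\sum_{|k|=\lambda_0}a_ka_{-k}\,B_k\otimes B_{-k}$. Since $a_ka_{-k}=a_k\overline{a_k}=|a_k|^2$ and $|a_{-k}|=|a_k|$, replacing $k$ by $-k$ throughout and averaging with the original sum symmetrizes it into $\tfrac12\sum_{|k|=\lambda_0}|a_k|^2\big(B_k\otimes B_{-k}+B_{-k}\otimes B_k\big)$, and the stated identity $B_k\otimes B_{-k}+B_{-k}\otimes B_k=\mathrm{Id}-\tfrac{k}{|k|}\otimes\tfrac{k}{|k|}$ then yields \eqref{A3}.

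It remains to justify that last identity, which I would record for completeness: writing $e=k/|k|$ and $a=A_k$, one has $B_k=a+i(e\times a)$ and, using $A_{-k}=A_k$, $B_{-k}=a-i(e\times a)$, so the antisymmetric $i$-terms cancel and $B_k\otimes B_{-k}+B_{-k}\otimes B_k=2\big(a\otimes a+(e\times a)\otimes(e\times a)\big)$; since $a\cdot e=0$ forces $|e\times a|=|a|=\tfrac1{\sqrt2}$, the triple $\{\sqrt2\,a,\sqrt2\,(e\times a),e\}$ is an orthonormal basis of $\mathbb{R}^3$, whence the right-hand side equals $\mathrm{Id}-e\otimes e$. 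There is no genuine obstacle in this lemma; the only points requiring care are the bookkeeping in the symmetrization step and the elementary linear-algebra check of the frame identity.
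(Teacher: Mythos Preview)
Your argument is correct in every step: the reality, divergence-free, and Beltrami properties follow mode by mode exactly as you wrote, the orthogonality of exponentials isolates the $k'=-k$ terms in the tensor product, the symmetrization is legitimate because $|a_{-k}|=|a_k|$, and your verification of $B_k\otimes B_{-k}+B_{-k}\otimes B_k=\mathrm{Id}-e\otimes e$ via the orthonormal frame $\{\sqrt2\,a,\sqrt2\,(e\times a),e\}$ is clean and accurate.

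The paper itself gives no proof of this lemma; it simply cites \cite[Proposition~3.1]{DelSze13}. Your write-up is therefore more self-contained than what the paper does, and follows the standard direct computation one finds in that reference.
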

	Moreover, the following geometric lemma can be found in \cite[Lemma 3.2]{DelSze13}.
	\begin{lemma}\label{Belw2}
		For every $N\in \mathbb{N}$ we can choose $r_0>0$ and $\lambda_0>1$ with the following property. There exist pairwise disjoint subsets $\Lambda_j \subset \{k\in \mathbb{Z}^3:|k|=\lambda_0 \}$, for $j\in\{1,\dots,N\}$
		and smooth positive functions 
		\begin{align*}
			\gamma_k^{(j)}\in C^\infty(B_{r_0}(\mathrm{Id})),\quad  j\in\{1,\dots,N\}, \ k\in \Lambda_j,
		\end{align*}
		such that 
		$ k\in \Lambda_j$  implies $-k\in \Lambda_j $ and  $\gamma_k^{(j)}=\gamma_{-k}^{(j)}$. And for each $R\in B_{r_0}(\mathrm{Id})$ we have the identity
		\begin{align}\label{A6}
			R=\frac{1}{2}\sum_{k\in \Lambda_j}\left( \gamma_k^{(j)}(R)\right)^2\left( \mathrm{Id}-\frac{k}{|k|}\otimes\frac{k}{|k|}\right) .
		\end{align}
	\end{lemma}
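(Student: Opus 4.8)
The plan is to reduce the identity \eqref{A6} to a statement in the convex geometry of symmetric matrices and then produce the functions $\gamma_k^{(j)}$ by a right-inverse argument. Write $\hat k:=k/|k|$ and $P_k:=\mathrm{Id}-\hat k\otimes\hat k$ for the orthogonal projection onto the plane perpendicular to $k$; note $P_k=P_{-k}$ and $P_k$ lies in the space $\mathcal{S}^{3\times3}$ of symmetric $3\times3$ matrices. The point I would establish first is that $\mathrm{Id}$ lies in the \emph{interior} of the convex cone $C$ generated by $\{P_e:e\in S^2\}$. Indeed, with $\mathrm{d}\sigma$ the normalized surface measure on $S^2$ one has $\int_{S^2}e\otimes e\,\mathrm{d}\sigma(e)=\tfrac13\mathrm{Id}$, hence $\mathrm{Id}=\tfrac32\int_{S^2}P_e\,\mathrm{d}\sigma(e)$ is a conical combination of the $P_e$; moreover $\{P_{e_1},P_{e_2},P_{e_3}\}$ together with $P_{(e_i+e_j)/\sqrt2}$ ($i\neq j$) already spans $\mathcal{S}^{3\times3}$. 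A supporting-hyperplane argument (if $\mathrm{Id}\in\partial C$ there is $M\neq0$ with $\langle M,P_e\rangle\geq0$ for all $e$ and $0=\langle M,\mathrm{Id}\rangle=\tfrac32\int\langle M,P_e\rangle\,\mathrm{d}\sigma$, forcing $M\perp\mathcal{S}^{3\times3}$) then shows $\mathrm{Id}\in\mathrm{int}\,C$.

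Next I would discretize and pass to lattice directions. Since the set of $e\in S^2$ with rational coordinates is dense in $S^2$, and since ``$\mathrm{Id}$ lies in the interior of the cone generated by a given finite family $\{P_{e_i}\}$'' is an open condition on that family, I can first pick finitely many rational unit vectors $\pm f_1,\dots,\pm f_m$ (closed under $f\mapsto-f$) with $\mathrm{Id}$ interior to the cone generated by $\{P_{f_i}\}$. To obtain $N$ pairwise disjoint families on a single integer sphere I would perturb: using density of rational points near each $f_i$, choose for each $j\in\{1,\dots,N\}$ a rational unit vector $f_i^{(j)}$ so close to $f_i$ that, by the openness just mentioned, $\{P_{f_i^{(j)}}\}_{i=1}^m$ still has $\mathrm{Id}$ in the interior of its cone, arranging in addition that all $Nm$ vectors $f_i^{(j)}$ are distinct. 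Taking $\lambda_0$ to be a common denominator of the coordinates of all $f_i^{(j)}$, the sets $\Lambda_j:=\{\pm\lambda_0 f_i^{(j)}:i=1,\dots,m\}$ are pairwise disjoint subsets of $\{k\in\mZ^3:|k|=\lambda_0\}$, each invariant under $k\mapsto-k$, and for each $j$ the matrix $\mathrm{Id}$ lies in the interior of the cone generated by $\{P_k:k\in\Lambda_j\}$.

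With $\Lambda_j$ fixed, I would produce the coefficients as follows. The interior property furnishes strictly positive $(\mu_k^{(j)})_{k\in\Lambda_j}$ with $\mathrm{Id}=\sum_{k\in\Lambda_j}\mu_k^{(j)}P_k$, and averaging over $k\leftrightarrow-k$ (using $P_k=P_{-k}$) lets me take $\mu_k^{(j)}=\mu_{-k}^{(j)}$. The linear map $\Phi(\nu):=\sum_{k\in\Lambda_j}\nu_kP_k$ from $\mR^{\Lambda_j}$ to $\mathcal{S}^{3\times3}$ is surjective, so it admits a linear right inverse $\Psi$, which may be chosen compatible with $k\leftrightarrow-k$; setting $\mu_k^{(j)}(R):=\mu_k^{(j)}+\big(\Psi(R-\mathrm{Id})\big)_k$ gives affine, hence $C^\infty$, functions on $\mathcal{S}^{3\times3}$ with $\sum_k\mu_k^{(j)}(R)P_k=R$, $\mu_k^{(j)}(\mathrm{Id})=\mu_k^{(j)}>0$ and $\mu_k^{(j)}(R)=\mu_{-k}^{(j)}(R)$. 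Choosing $r_0>0$ (independent of $j$, take the minimum) so small that $\mu_k^{(j)}(R)>0$ for all $k,j$ and all $R\in B_{r_0}(\mathrm{Id})$, the functions $\gamma_k^{(j)}(R):=\sqrt{2\,\mu_k^{(j)}(R)}$ are smooth and positive on $B_{r_0}(\mathrm{Id})$, satisfy $\gamma_k^{(j)}=\gamma_{-k}^{(j)}$, and
\[
R=\sum_{k\in\Lambda_j}\mu_k^{(j)}(R)\,P_k=\tfrac12\sum_{k\in\Lambda_j}\big(\gamma_k^{(j)}(R)\big)^2\Big(\mathrm{Id}-\tfrac{k}{|k|}\otimes\tfrac{k}{|k|}\Big),
\]
which is \eqref{A6}; this $r_0$ and the common $\lambda_0$ work for all $j\in\{1,\dots,N\}$.

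I expect the main obstacle to be the combinatorial step in the second paragraph: placing $N$ families \emph{simultaneously} on one integer sphere $\{|k|=\lambda_0\}$ while preserving, for every family, the interior property of $\mathrm{Id}$ in the cone it generates. This is precisely where one exploits the density of rational directions on $S^2$ together with the stability of the interior condition under small perturbations of the finitely many chosen directions. Everything else — the convex-geometry computation, the passage from a single algebraic identity at $\mathrm{Id}$ to a smooth family nearby via a linear right inverse, and the extraction of a smooth positive square root — is routine once positivity of the coefficients near $\mathrm{Id}$ has been secured.
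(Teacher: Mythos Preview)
Your proposal is correct and essentially complete. Note, however, that the paper itself does not prove this lemma: it merely states the result and cites \cite[Lemma~3.2]{DelSze13}. Your argument is in fact the standard one behind that reference --- show that $\mathrm{Id}$ lies in the interior of the cone generated by the projections $P_e=\mathrm{Id}-e\otimes e$, discretize to rational (hence lattice) directions, and use surjectivity of the linear map $\nu\mapsto\sum_k\nu_kP_k$ to obtain a smooth local right inverse whose square roots yield the $\gamma_k^{(j)}$. One small point worth making explicit: the surjectivity of $\Phi$ onto $\mathcal{S}^{3\times3}$ follows automatically from your interior condition, since a cone with nonempty interior must span the ambient space; you rely on this implicitly when invoking a linear right inverse.
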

	Also, it is convenient to denote by $M$ a geometric constant such that 
	\begin{align}\label{A5}
		\sup_{k\in \Lambda_j}\sum_{0\leq l\leq n}\|\gamma_k^{(j)}\|_{C^l(B_{r_0}(\Id))}\leq M 
	\end{align}
	holds for $n$ large enough and $j\in \{1,\dots,	N\}$. This parameter is universal.

	\renewcommand{\theequation}{B.\arabic{equation}}
	\section{Proof of Proposition~\ref{estiak} and Proposition~\ref{cor:U}}\label{ap:A}  
	For completeness, we include here the detailed proof of Proposition~\ref{estiak} and Proposition~\ref{cor:U}, and the notation for the norms retains the same as in Section~\ref{ci2}.
	
	\begin{proof}[Proof of Proposition~\ref{estiak}]
		First, we have the following estimates from \cite[Appendix B]{LZ24}
		\begin{align}
			\|h_k\|_{C^N_{s,y}} \lesssim & \ell^{-2N-\frac{1}{2}}(1+\|\mathring{R}_q\|_{C^0_{[s-1,s+1],y}})^{(N+1)}, \quad N\geq2, \label{estimate hN1}
			\\	\|h_k\|_{C^0_{s,y}} \lesssim\|\mathring{R}_q\|^{\frac{1}{2}}_{C^0_{[s-1,s+1],y}}&+\ell^{\frac{1}{2}}+\delta_{q+1}^\frac12 ,\quad
			\|h_k\|_{C^1_{s,y}} \lesssim\ell^{-\frac{3}{2}}(\|\mathring{R}_q\|_{C^0_{[s-1,s+1],y}}+1)^{\frac32}. \label{estimate hN01}
		\end{align}
		
		We first estimate \eqref{a_k1} and \eqref{a_k0}. 	Recalling the definition of $a_k$ in Section~\ref{313},
		\begin{align*}
			a_k(s,y,\tau)=\mathbf{1}_{\{k\in \Lambda_j\}}h_k(s,y)\phi_k^{(j)}( \tau,(v_\ell+z_\ell)(s,y)).
		\end{align*} 
		By \eqref{phi1}, \eqref{estimate hN1}, \eqref{estimate hN01} and applying chain rule and \cite[Proposition C.1]{BDLIS16} to $a_k$, we obtain for $N\geq 1$
		\begin{equation}\label{ak}
			\aligned
			&\|a_k\|_{C^N_{s,y}} 
			\\ &\lesssim \sum_{l=1}^{N}  \|\phi_k^{(j)}( \tau,v_\ell+z_\ell)\|_{C^l_{s,y}}    \|h_k\|_{C^{N-l}_{s,y}}  +\|h_k\|_{C^N_{s,y}}
			\\ &\lesssim \sum_{l=1}^{N} \|h_k\|_{C^{N-l}_{s,y}} (\lambda_{q}\|v_\ell+z_\ell\|_{C^l_{s,y}}+\lambda_{q}^l \|v_\ell+z_\ell\|_{C^1_{s,y}}^l)  +\ell^{-2N-\frac{1}{2}}(1+\|\mathring{R}_q\|_{C^0_{[s-1,s+1],y}})^{N+1}
			\\&\lesssim \sum_{l=1}^{N}\ell^{-2(N-l)-\frac12}(1+\|\mathring{R}_q\|_{C^0_{[s-1,s+1],y}})^{N-l+1} \ell^{-2l}(\|v_q+z_q\|_{C^0_{[s-1,s+1],y}}+\|v_q+z_q\|_{C^0_{[s-1,s+1],y}}^l)
			\\ &\qquad \qquad + \ell^{-2N-\frac{1}{2}}(1+\|\mathring{R}_q\|_{C^0_{[s-1,s+1],y}})^{N+1}
			\\&\lesssim \ell^{-2N-\frac{1}{2}}(1+\|v_q\|_{C^0_{[s-1,s+1],y}}+\|z_q\|_{C^0_{[s-1,s+1],y}})^N(1+\|\mathring{R}_q\|_{C^0_{[s-1,s+1],y}})^{N+1}.
			\endaligned
		\end{equation}
		For $N=0$, we have
		\begin{align}\label{C4}
			\|a_k\|_{C^0_{s,y}}\leq \|h_k\|_{C^0_{s,y}} \lesssim \|\mathring{R}_q\|^{\frac{1}{2}}_{C^0_{[s-1,s+1],y}}+\ell^{\frac{1}{2}}+\delta_{q+1}^\frac12.
		\end{align}

		Next, we estimate \eqref{a_k2} and \eqref{tauak0}.
		Similarly, by \eqref{phi2}, \eqref{estimate hN1}, \eqref{estimate hN01} and applying chain rule and \cite[Proposition C.1]{BDLIS16} to $\partial_\tau a_k$, we obtain for $N\geq 1$
		\begin{equation}\label{tauak1}
			\aligned
			\|\partial_\tau a_k\|_{C^N_{s,y}} 
			&\lesssim \sum_{l=1}^{N}  \|\partial_\tau \phi_k^{(j)}( \tau,v_\ell+z_\ell)\|_{C^l_{s,y}}    \|h_k\|_{C^{N-l}_{s,y}}  +\|h_k\|_{C^N_{s,y}} \|\partial_\tau \phi_k^{(j)}( \tau,v_\ell+z_\ell)\|_{C^0_{s,y}}
			\\ &\lesssim \sum_{l=1}^{N} \|h_k\|_{C^{N-l}_{s,y}} (\lambda_{q}\|v_\ell+z_\ell\|_{C^l_{s,y}}+\lambda_{q}^l \|v_\ell+z_\ell\|_{C^1_{s,y}}^l)(1+\|v_\ell+z_\ell\|_{C^0_{s,y}}) 
			\\ &\qquad+\ell^{-2N-\frac{1}{2}}(1+\|\mathring{R}_q\|_{C^0_{[s-1,s+1],y}})^{N+1}(1+\|v_\ell+z_\ell\|_{C^0_{s,y}}) 
			\\&\lesssim \ell^{-2N-\frac{1}{2}}(1+\|v_q\|_{C^0_{[s-1,s+1],y}}+\|z_q\|_{C^0_{[s-1,s+1],y}})^{N+1}(1+\|\mathring{R}_q\|_{C^0_{[s-1,s+1],y}})^{N+1}.
			\endaligned
		\end{equation}
		For $N=0$, using \eqref{phi2} and \eqref{C4}, we obtain
		\begin{align}\label{C7}
			\|\partial_\tau a_k\|_{C^0_{s,y}} \lesssim (1+\|v_q\|_{C^0_{[s-1,s+1],y}}+\|z_q\|_{C^0_{[s-1,s+1],y}})(1+\|\mathring{R}_q\|^{\frac{1}{2}}_{C^0_{[s-1,s+1],y}}).
		\end{align}
		
		At last, we estimate \eqref{derivat a_k} and \eqref{deriva ak0}.
		From \eqref{pro phi}, it follows that in a neighborhood of $(\tau,v_\ell+z_\ell)\in \R \times \R^3$ 
		\begin{equation}\label{B7}
			\aligned
			\partial_\tau \phi_k^{(j)}+ik\cdot( v_\ell+z_\ell)\phi_k^{(j)}&=ik\cdot(v_\ell+z_\ell-\frac{\bar{l}}{\lambda_{q}})\phi_k^{(j)},
			\\ \partial_\tau a_k+ik\cdot( v_\ell+z_\ell)a_k&=ik\cdot(v_\ell+z_\ell-\frac{\bar{l}}{\lambda_{q}})a_k,
			\endaligned
		\end{equation}
		where $\bar{l}\in \mathcal{C}_j$ satisfies 
		\begin{align}\label{B8}
			|\lambda_{q}(v_\ell+z_\ell)-\bar{l}|\leq 1.
		\end{align}
		Then using \eqref{ak}, \eqref{B7}, \eqref{B8} and chain rule, we obtain for $N\geq 1$
		\begin{equation*}\label{tau+ak}
			\aligned
			\|&\partial_\tau a_k+ik\cdot( v_\ell+z_\ell)a_k\|_{C^N_{s,y}} 
			\lesssim \|(v_\ell+z_\ell-\frac{\bar{l}}{\lambda_{q}})a_k\|_{C^N_{s,y}}
			\\&\lesssim \|a_k\|_{C^N_{s,y}}+ \sum_{j=0}^{N-1} \|a_k\|_{C^j_{s,y}}(1+\|v_\ell+z_\ell\|_{C^{N-j}_{s,y}})
			\\&\lesssim\ell^{-2N-\frac{1}{2}}(1+\|v_q\|_{C^0_{[s-1,s+1],y}}+\|z_q\|_{C^0_{[s-1,s+1],y}})^{N}(1+\|\mathring{R}_q\|_{C^0_{[s-1,s+1],y}})^{N+1}.
			\endaligned
		\end{equation*}
		For $N=0$, using \eqref{C4}, \eqref{B7}, \eqref{B8} again, we have
		\begin{equation*}\label{tauak2}
			\aligned
			\|\partial_\tau a_k+ik\cdot( v_\ell+z_\ell)a_k\|_{C^0_{s,y}} &\lesssim \frac{
				\| (\lambda_{q}(v_\ell+z_\ell)-\bar{l} )a_k\|_{C^0_{s,y}}}{\lambda_{q}}
			\lesssim \frac{\|\mathring{R}_q\|^{\frac{1}{2}}_{C^0_{[s-1,s+1],y}}+\ell^\frac{1}{2}+\delta_{q+1}^\frac12}{\lambda_{q}}.
			\endaligned
		\end{equation*}
		Hence, the proof of Proposition~\ref{estiak} is complete.
	\end{proof}
	\begin{proof}[Proof of Proposition~\ref{cor:U}]
		Since \eqref{WxW1} and \eqref{eq:proU} can be considered as a direct consequence of
		\cite[Proposition 6.1]{DelSze13}, here we only provide proof of \eqref{esti:UN} and \eqref{esti:U1}. For each $1\leq |k|\leq 2 \lambda_0$, $U_k$ is the sum of finitely many terms of the form $a_k' a_k''$ for $k=k'+k''$ and $k',k''\in \Lambda$. Thus, the estimates for $U_k$ are similar to those for $a_k$. More precisely, by using \eqref{ak}, \eqref{C4} and chain rule, we deduce for any $s\in \mR$ and $N\geq2$, 
		\begin{align*}\label{Uk13}
			\begin{aligned}
				\| &U_k\|_{C^0_sC^N_y}\lesssim \sum_{k'+k''=k} \| a_{k'}a_{k''}\|_{C^0_sC^N_y}
				\\ &\lesssim \ell^{-2N-\frac{1}{2}}(1+\|v_q\|_{C^0_{[s-1,s+1],y}}+\|z_q\|_{C^0_{[s-1,s+1],y}})^N(1+\|\mathring{R}_q\|_{C^0_{[s-1,s+1],y}})^{N+1}(1+\|\mathring{R}_q\|_{C^0_{[s-1,s+1],y}}^{\frac12})
				\\&+ \sum_{j=1}^{N-1} \bigg[ \ell^{-2j-\frac12}(1+\|v_q\|_{C^0_{[s-1,s+1],y}}+\|z_q\|_{C^0_{[s-1,s+1],y}})^j(1+\|\mathring{R}_q\|_{C^0_{[s-1,s+1],y}})^{j+1}
				\\  & \qquad \qquad \times \ell^{-2(N-j)-\frac12}(1+\|v_q\|_{C^0_{[s-1,s+1],y}}+\|z_q\|_{C^0_{[s-1,s+1],y}})^{N-j}(1+\|\mathring{R}_q\|_{C^0_{[s-1,s+1],y}})^{N-j+1}\bigg]
				\\ &\lesssim \ell^{-2N-1}(1+\|v_q\|_{C^0_{[s-1,s+1],y}}+\|z_q\|_{C^0_{[s-1,s+1],y}})^N(1+\|\mathring{R}_q\|_{C^0_{[s-1,s+1],y}})^{N+2}.
			\end{aligned}
		\end{align*}
		Similarly, using \eqref{ak} and \eqref{C4} again, we obtain
		\begin{align*}
			\|U_k\|_{C^0_sC^1_y}&\lesssim  \ell^{-\frac52}(1+\|v_q\|_{C^0_{[s-1,s+1],y}}+\|z_q\|_{C^0_{[s-1,s+1],y}})(1+\|\mathring{R}_q\|_{C^0_{[s-1,s+1],y}})^{2}(1+\|\mathring{R}_q\|_{C^0_{[s-1,s+1],y}}^{\frac12})\\ &\lesssim \ell^{-\frac52}(1+\|v_q\|_{C^0_{[s-1,s+1],y}}+\|z_q\|_{C^0_{[s-1,s+1],y}})(1+\|\mathring{R}_q\|_{C^0_{[s-1,s+1],y}}^{\frac12})^5. \qedhere
		\end{align*}
	\end{proof}
	
	\section*{Acknowledgment}
	The author would like to thank Rongchan Zhu for quite useful discussion and advice. This work is supported by National Key R\&D Program of China (No. 2022YFA1006300) and the NSFC grant of China (No. 12271030).
  
	\section*{Declarations}
	 
    Data sharing not applicable to this article as no datasets were generated or analyzed during the current study.
    The authors have no competing interests to declare that are relevant to the
    content of this article.

	\def\cprime{$'$} \def\ocirc#1{\ifmmode\setbox0=\hbox{$#1$}\dimen0=\ht0
		\advance\dimen0 by1pt\rlap{\hbox to\wd0{\hss\raise\dimen0
				\hbox{\hskip.2em$\scriptscriptstyle\circ$}\hss}}#1\else {\accent"17 #1}\fi}

\end{document}